\title{Square-free graphs are multiplicative}
\date{First appeared: January 2016. Updated: August 2016.}
\author{Marcin Wrochna}
\affil{%
	Institute of Informatics, University of Warsaw, Poland\\
	\textnormal{\texttt{m.wrochna@mimuw.edu.pl}}}
\keywords{square-free, four-cycle-free, homomorphism, Hedetniemi's conjecture, homotopy, multiplicativity, box complex}
\def\NN{\mathbb{N}}
\def\ZZ{\mathbb{Z}}
\def\Hom{\mbox{Hom}}
\def\col{\mu}
\def\eps{\varepsilon}
\def\red#1{\overline{#1}}
\def\sed#1{[#1]}
\def\cat{\_}
\def\pexp#1#2{{#1}^{\cdot #2}}
\def\od{\vec{d}}
\def\pd{\Delta}
\def\grouppi{\boldsymbol{\pi}}
\def\fundPiSymbol{\boldsymbol{\pi}}
\def\fundPi#1{\boldsymbol{\pi}(#1)}
\def\fundpi#1#2{\boldsymbol{\pi}_{#1}(#2)}
\def\coarsePiSymbol{\boldsymbol{\pi}_{/\sim}}
\def\coarsePi#1{\boldsymbol{\pi}(#1)_{/\sim}}
\def\coarsepi#1#2{\boldsymbol{\pi}_{#1}(#2)_{/\sim}}
\def\lcycle#1#2{#1 \otimes #2}
\def\rcycle#1#2{#1 \otimes #2}
\tikzset{ %
	H/.style={circle,fill=gray!20,draw=gray!20,line width=2pt,inner sep=0pt,minimum size=15pt},
	He/.style={draw=gray!20,line width=12pt},
	G/.style={circle,fill=black,inner sep=0pt,minimum size=3pt},
	G1/.style={G,minimum size=5pt},
	Ge/.style={draw=black},
	v/.style={circle,draw=black!75,inner sep=0pt,minimum size=8pt},
	v1/.style={v,line width=1.5pt},
	Gv/.style={inner sep=0pt},
	Gve/.style={Ge,dashed},
}
\begin{document}

\maketitle

\begin{abstract}
A graph $K$ is \emph{square-free} if it contains no four-cycle as a subgraph (i.e., for every quadruple of vertices, if $v_0v_1,v_1v_2,v_2v_3,v_3v_0 \in E(K)$, then $v_0=v_2$ or $v_1=v_3$).
A graph $K$ is \emph{multiplicative} if $G\times H \to K$ implies $G\to K$ or $H \to K$, for all graphs $G,H$.
Here $G\times H$ is the tensor (or categorical) graph product and $G\to K$ denotes the existence of a graph homomorphism from $G$ to $K$.
Hedetniemi's conjecture, which states that $\chi(G\times H)=\min(\chi(G),\chi(H))$, is equivalent to the statement that all cliques $K_n$ are multiplicative.
However, the only non-trivial graphs known to be multiplicative are $K_3$, odd cycles, and still more generally, circular cliques $K_{p/q}$ with $2\leq \frac{p}{q}<4$.
We make no progress for cliques, but show that all square-free graphs are multiplicative.
In particular, this gives the first multiplicative graphs of chromatic number higher than~4.
Generalizing, in terms of the \emph{box complex}, the topological insight behind existing proofs for odd cycles, we also give a different proof for circular cliques with $2\leq\frac{p}{q}<4$.
\end{abstract}

\section{Introduction}\label{sec:intro}
Graph homomorphism (see Section~\ref{sec:prelims} for definitions) is an ubiquitous notion in graph theory with a variety of applications, and a first step to understanding more general constraints given by relational structures, see e.g. the monograph of Hell and Ne{\v{s}}et{\v{r}}il~\cite{hell2004homomorphism_book}.
In this work, we consider undirected, simple graphs without loops (the questions we consider have trivial answers for graphs with loops).
The tensor product $G\times H$ is a natural operation arising in this context (in particular, it coincides with the so called categorical product in the category of graphs).
Multiplicative graphs are then graphs $K$ such that $G\times H \to K$ implies $G\to K$ or $H\to K$, for all graphs $G,H$.
That is, the product of any two graphs admits a homomorphism to $K$ only if at least one of the factors does (the converse is trivial, as $G\times H \to G$ and $G\times H \to H$).
The tensor product and disjoint union define a distributive lattice structure on the category of graphs, in which multiplicative graphs, coinciding with so called \emph{meet-irreducible} elements of the lattice, play a crucial role (see~\cite{hell79Category,hell2004homomorphism_book,DuffusS96}).
However, most interest in multiplicativity originates in Hedetniemi's conjecture~\cite{hedetniemi1966homomorphisms}, which states that all cliques $K_n$ are multiplicative. This is equivalent to the original simple statement on the chromatic number, namely ``$\chi(G\times H)=\min(\chi(G),\chi(H))$ for any two graphs $G,H$''.
See~\cite{tardif2008hedetniemi,Sauer01,zhu1998survey} for surveys.

That $K_2$ is multiplicative---i.e., a product of two graphs is bipartite iff one of the factors is---follows easily from the fact that a graph is bipartite iff it has no odd-length cycle.
$K_3$ was proved to be multiplicative by El~Zahar and Sauer~\cite{El-ZaharS85}.
Their proof was generalized to odd cycles by H{\"{a}}ggkvist et~al.~\cite{HaggkvistHMN88}.
Much later, Tardif~\cite{Tardif05} used these results together with general constructions on graphs to show that circular cliques $K_{p/q}$ are multiplicative for any integers $p,q$ satisfying $2\leq \frac{p}{q}<4$.
Circular cliques generalize cliques and odd cycles in the sense that $K_n = K_{n/1}$, $C_{2n+1} = K_{(2n+1)/n}$, and $K_{p/q} \to K_{p'/q'}$ holds iff $\frac{p}{q} \leq \frac{p'}{q'}$ (see~\cite{Zhu01}).

These are the only undirected graphs known to be multiplicative (up to `homomorphic equivalence', note e.g. that even cycles admit homomorphisms from and to $K_2$, multiplicativity follows trivially).
To show a typical example of graphs that are not multiplicative, consider two graphs $G,H$ such that $G\not\to H$ and $H\not\to G$; then $K=G\times H$ is not multiplicative, because $G\times H \to K$ trivially holds, whereas $G\to K$ would imply $G \to G \times H \to H$ (and $\to$ is transitive).

Many other partial results in different directions are known.
A relatively recent one is a proof by Delhomm{\'{e}} and Sauer~\cite{DelhommeS02} that if $G$ and $H$ are connected graphs each containing a triangle and $K$ is a square-free graph, then $G\times H \to K$ implies $G\to K$ or $H\to K$.
Square-free graphs are graphs without $C_4$ as a subgraph (not necessarily induced; if we allowed loops, we should exclude two adjacent loops and a triangle with a looped vertex, too).

The purpose of this paper is to show that in fact all square-free graphs are multiplicative, answering a question of~\cite{tardif2008hedetniemi}.
Since graphs of girth at least 5 are square-free and are well known to have arbitrarily high chromatic number, this gives in particular the first multiplicative graphs of chromatic number higher than 4 (still, this does not imply any new case of Hedetniemi's conjecture).
We also give a different proof of the multiplicativity of $K_{p/q}$ for $2\leq \frac{p}{q} < 4$, very similar to the earlier proofs for odd cycles~\cite{El-ZaharS85,HaggkvistHMN88} and~\cite{DelhommeS02}, by making the topological intuitions therein more general and explicit.
This shows that this approach applies to all graphs currently known to be multiplicative.
However, it makes it all the more interesting to ask whether this can be connected with the approach in~\cite{Tardif05}, which appears to be more general.

\subparagraph{Topological combinatorics}
Formally, all the proofs in this paper are self-contained and combinatorial, not requiring any knowledge of topology.
However, the intuitions behind proofs heavily rely on some basic algebraic topology.

The name of \emph{topological combinatorics} may at first seem oxymoronic, but it is in fact now an established branch originating in Lov{\'{a}}sz' surprising topological proof~\cite{Lovasz78} of a conjecture of Kneser.
This approach was further explored, giving various related \emph{topological lower bounds} on the chromatic number of a graph.
See~\cite{matousek2008using} for a graceful and extensive introduction to the topic, or~\cite{SimonyiZ10} for a faster one.
Without going into much detail, the idea is to define, for every graph $G$, a topological space (more exactly, a simplicial complex) $B(G)$ called the \emph{box complex} 
with the property that $G\to K$ implies the existence of a non-trivial continuous map from $B(G)$ to $B(K)$. Comparing topological properties of the topological spaces one can show, e.g. using the Borsuk-Ulam theorem, that such a continuous map is not possible, and hence $G \not \to K$.
Results of Csorba~\cite{Csorba08} and Kozlov~\cite{kozlovChapter} imply that $B(G\times H)$ is equivalent to $B(G)\times B(H)$, from which it follows easily that if $G,H$ are graphs for which topological lower bounds on the chromatic number are tight, then $G\times H \to K_n$ implies $G\to K_n$ or $H\to K_n$ for all $n$; see Remark~3 in~\cite{SimonyiZ10} for details.

The proofs on multiplicativity in~\cite{El-ZaharS85,HaggkvistHMN88,DelhommeS02} rely on a common topological invariant, which is essentially just the parity of the winding number of certain cycles.
This bears no relation to the above research direction.
We will show that a stronger invariant can be used, essentially the homotopy type of these cycles.
The topological space that we think of here is formed from a graph by taking a copy of the $[0,1]\subseteq\mathbb{R}$ interval for every edge and identifying all endpoints that correspond to the same vertex (in other words, we view the graph as a simplical complex).
Consider now paths in this topological space up to homotopy, that is, up to continuous transformations that fix the endpoints. Paths with a matching endpoint can be concatenated, giving a new path.
This gives a group-like structure called the \emph{fundamental groupoid}, which we will denote as $\fundPi{G}$.
A precise definition with examples will come later.

It turns out that for graphs with squares, a topological space with better properties is obtained by gluing every square $v_0,v_1,v_2,v_3$ in the graph as a face to the cycle it forms.
So a path going from $v_0$ through $v_1$ to $v_2$ can be continuously transformed to go through $v_3$ instead (in other words, we add each square to the simplicial complex).
This will give a coarser groupoid $\coarsePi{G}$ in which more paths are considered equivalent.
We also rely on parity arguments involving the lengths of paths, which could be thought of in topological terms by starting from the graph $G\times K_2$ instead.\looseness=-1

At this point, a reader familiar with the box complex $B(G)$ may realize that we are essentially again describing $B(G)$ as the topology corresponding to $G$. Indeed, one of several definitions of $B(G)$ involves taking $G\times K_2$ as a simplicial complex and adding all complete bipartite subgraphs as faces, squares in particular (since we only look at the fundamental group afterwards, higher dimensional faces turn out to be irrelevant).
However, we mostly use it to describe square-free graphs $K$ or other graphs in which topological lower bounds are almost trivial (the homotopy type of box complexes of square-free graphs has been described exactly in~\cite{Kamibeppu}).
The observation that $B(G)$ will describe the intuitions well may thus be just a coincidence.
Still, it would be a particularly intriguing coincidence, and suggests many possible approaches to generalizations.
On the other hand, the results still strongly rely on analyzing odd cycles, which may exclude interesting generalizations, in particular any applicability to Hedetniemi's conjecture.

\subparagraph{Recoloring and graph homomorphism spaces}
When a graph $K$ is fixed, we can think of its vertices as \emph{colors} and we call a graph homomorphism $G \to K$ a $K$-coloring of $G$.
\emph{Recoloring} a $K$-coloring $\mu$ then means changing the colors assigned by $\mu$ one vertex at a time, maintaining the property of being a $K$-coloring at all times.
Recently the problem of whether two given $K$-colorings can be reached one from the other by recoloring has been analyzed from an algorithmic point of view.
This is part of a larger framework called \emph{reconfiguration}, in which one looks for sequences of simple transformations between solutions of combinatorial problems, see e.g.~\cite{Schwerdtfeger15,Heuvel13}. 

Recoloring defines a graph whose vertices are all $K$-colorings of $G$ and whose edges correspond to a single recoloring step.
Essentially the same graph is known elsewhere as the 1-skeleton of $\Hom(G,K)$ (a certain simplicial complex having $K$-colorings of $G$ as vertices, see e.g.~\cite{Dochtermann09}; let us just mention that $\Hom(K_2,G)$ happens to be another definition of the box complex of $G$).
Another object describing a `space of $K$-colorings' is the \emph{exponential graph} $K^G$, with all functions $V(G)\to V(K)$ as vertices and an edge between two functions $f,g$ whenever the pair defines a $K$-coloring of $G\times K_2$.
The exponential graph is especially relevant in context of Hedetniemi's conjecture; e.g., a graph $K$ is multiplicative if and only if for every graph $G$, either $K^G$ or $G$ is $K$-colorable.
Since $G \times K^G \to K$ can be easily shown, this means the exponential graph provides the hardest test for multiplicativity, in a sense; see e.g.~\cite{tardif2008hedetniemi}.
Paths in $K^G$ are also related to recoloring, as it is easily checked that two edges $fg$ and $f'g'$ of $K^G$ are connected by a path if and only if the corresponding $K$-colorings of $G\times K_2$ can be recolored into each other.

The aformentioned algorithmic problem concerning recoloring has been shown to be, somewhat surprisingly, solvable in polynomial time for $K=K_3$~\cite{CerecedaHJ11} (it is  \textsc{PSPACE}-complete for $K=K_n$ with $n\geq 4$~\cite{BonsmaC09}).
The author~\cite{Wrochna15} generalized the result, giving a polynomial algorithm for any square-free graph $K$.
This is achieved by the following characterization: two $K$-colorings of a graph $G$ can be recolored into one another (are in the same connected component of $\Hom(G,K)$) if and only if they are topologically equivalent (i.e. homotopically equivalent as maps between 1-dimensional simplicial complexes) and two more simple conditions are satisfied.
In this work, we cannot use this characterization directly, but we use the same topological tools and the general intuition that if we can do something via continuous transformations, then we can probably do this via recoloring, at least in square-free graphs.

The completeness result of~\cite{BonsmaC09} suggests that useful topological invariants might not exist for $K=K_4$.
On the other hand, it is also possible that such invariants do exist, but are algorithmically more complex only because they involve higher dimensions or additional discrete conditions.
In any case, studying this problem  from an algorithmic point of view for other $K$ might elucidate possible extensions of the topological approach, at least to graphs $K$ with a `1-dimensional' box complex.
An algorithm for $K=K_{p/q}$ for $2\leq \frac{p}{q} <4$ was recently shown by Brewster et al.~\cite{BrewsterMMN15}.

Interesting, partly related properties of homomorphisms from infinite grids $\ZZ^d$ to square-free graphs, with motivations in statistical thermodynamics, were found independently by Chandgotia~\cite{Chandgotia14}.

\subparagraph{Proof outline}
In Section~\ref{sec:prelims} we define basic notions and the fundamental groupoid $\fundPi{G}$.
Section~\ref{sec:commute} defines the equivalence relation $\sim$ between walks and the coarse groupoid $\coarsePi{G}$, which can be thought as quotienting $\fundPi{G}$ by the squares of $G$.
Then, basic facts about it are proved: that $\fundPi{G}$ is isomorphic to $\coarsePi{G}$ for square-free $G$, that graph homomorphism induce groupoid homomorphisms (in category-theoretic terms, $\coarsePi{\cdot}$ is a functor), and that $\coarsePi{G \times H}$ is almost isomorphic to $\coarsePi{G} \times \coarsePi{H}$, in a sense.
This last fact is used to show, for $\mu:G\times H \to K$, that: (1.) closed walks in $K$ coming from odd cycles in $G$ and $H$ must wind around the same cycles of $K$ (precise definitions will come in the main text), and (2.)  such cycles can be composed into one coming from a common cycle in $G\times H$ of odd length.

Section~\ref{sec:circular} then considers the case $K$ is a circular clique, first showing that indeed $K$ is topologically a circle ($\coarsePi{K}$ is isomorphic to $\ZZ$).
Then, from the above (2.) it will easily follow that closed walks in $K$ coming from odd cycles in $G$ and $H$ will wind twice an odd and even (respectively) number of times, or vice versa.
We conclude that in $G$, say, all odd cycles have odd winding `half-parity'.
This odd parity then implies that every odd cycle in $G$ has an edge $g_0g_1$ such that $\mu$ maps the edge $(g_0,h_0)(g_1,h_1)$ of $G\times H$ close to its antipode $(g_0,h_1)(g_1,h_0)$ (for some fixed edge $h_0h_1$ of $H$).
Such edges of $G$ can be disregarded, and we get a bipartite subgraph of $G$, which we color with either $\mu(\cdot,h_0)$ or $\mu(\cdot,h_1)$ according to a bipartition.
This gives a $K$-coloring of $G$, concluding the proof for circular cliques.

For the case of square-free $K$, in the last theorem of Section~\ref{sec:commute}, we show that the above (1.) implies that either all closed walks in $K$ coming from cycles in $G$ are topologically trivial (that is, they reduce to $\eps$), or the same holds for $H$ instead, or all cycles in $G\times H$ map to closed walks winding around the same root of $K$.
This is the starting point for Section~\ref{sec:recoloring}, where we aim to improve the $K$-coloring so that it corresponds to its topological type more closely.
An improved $K$-coloring of $G\times H$ then turns out to be in fact just a $K$-coloring of $G$ or $H$ composed with a projection, or a graph homomorphism from $G\times H$ to a cycle in $K$ (the cycle, actually a closed walk in general, corresponding to the root of $K$ that all of $G\times H$ winds around). 
Multiplicativity of cycles then implies $G\to K$ or $H\to K$.

\section{Preliminaries}\label{sec:prelims}
\subparagraph{Graphs}
An (undirected, simple) \emph{graph} $G$ is a pair $(V(G),E(G))$, where $V(G)$ is a finite set of \emph{vertices} and $E(G) \subseteq \{\{u,v\} \mid u\neq v\in V(G)\}$ is a set of \emph{edges}.
For \marginpar{$N_G()\\N_G^2()$}
a vertex set $S\subseteq V(G)$, its \emph{neighborhood} $N_G(S)$
is defined as $\{v \mid \{u,v\}\in E(G), u\in S\}$.
We write $N_G(v)$ for $N_G(\{v\})$ and $N_G^2(S)$ for $N_G(N_G(S))\setminus S$. Two vertices $u,v$ are called \emph{adjacent} if $\{u,v\}\in E(G)$.

Throughout this paper we assume that all graphs have at least two vertices and every vertex has a neighbor (that is, there are no isolated vertices---otherwise we could handle them trivially).

\smallskip
The path $P_n$ \marginpar{$P_n, C_n$\\$K_n$}
is the graph with $V(P_n)=\{1,\dots,n\}$ and $E(P_n)=\{\{i,i+1\} \mid i=1\dots n-1\}$.
The~cycle $C_n$ is the graph with $V(C_n)=\{0,\dots,n-1\}$ and $E(C_n)=\{\{i,i+1 \mod n\} \mid i=0\dots n-1\}$.
The~clique $K_n$ is the graph with $V(K_n)=\{0,\dots,n-1\}$ and $E(K_n)=\{\{i,j\} \mid i\neq j \in V(K_n)\}$.
The circular clique $K_{p/q}$, \marginpar{$K_{p/q}$}
for integers $p,q$ such that $\frac{p}{q}\geq2$, is the graph with $V(K_{p/q})= \ZZ_p$ and $E(K_{p/q}) = \{ \{i,i+j\} \mid j=q,q+1,\dots,p-q,\ i\in\ZZ_p\}$.

\subparagraph{Graph homomorphism, product, projection}
For \marginpar{$G\to K$}
two graph $G, K$, a homomorphism $\mu$ from $G$ to $K$ is a function $V(G)\to V(K)$ such that any $\{g_0,g_1\}\in E(G)$ implies $\{\mu(g_0),\mu(g_1)\}\in E(K)$. We write $\mu:G\to K$ if $\mu$ is a homomorphism from $G$ to $K$ and simply $G\to K$ if such a homomorphism exists.
Two graphs are \emph{homomorphically equivalent} if $G\to K$ and $K\to G$.

For\marginpar{$G\times H$}
two graphs $G,H$, their \emph{tensor product} (also called \emph{categorical product}), denoted $G\times H$ is the graph with $V(G\times H)=V(G)\times V(H)$ and $\{(g_0,h_0),(g_1,h_1)\}\in E(G\times H)$ iff $\{g_0,g_1\}\in E(G)$ and $\{h_0,h_1\}\in E(H)$.
For\marginpar{{\footnotesize{projection}}\\$v|_G$}
a vertex $v=(g,h)\in V(G\times H)$, $v|_G=g$ denotes its projection to $G$.
\pagebreak[1]

A\marginpar{\footnotesize{bipartite}}
graph $G$ is \emph{bipartite} if $V(G)$ can be partitioned into two sets $A,B$ such that $E(G)\subseteq \{ \{a,b\} \mid a\in A, b \in B\}$. Equivalently, $G$ contains no odd cycles. Also equivalently, $G\to K_2$.
Whenever the connectivity of some graph is needed, we frequently use the fact that $G\times H$ is connected if and only if $G,H$ are connected and at least one of them is not bipartite.
For $h_0h_1\in H$, \marginpar{$G\times h_0h_1$}
we write $G\times h_0h_1$ for the subgraph of $G\times H$ induced by $V(G) \times \{h_0,h_1\}$, isomorphic to $G\times K_2$.
Note that $C_n \times K_2$ is a cycle for $n$ odd.

\subparagraph{Walks}
Instead of continuous paths we will simply consider `walks' in our graphs.
Fix a graph $G$.\marginpar{$uv \in G$}
An \emph{oriented edge} is an ordered pair $e=(u,v)$ such that $\{u,v\}$ is an edge of $G$; we will always write oriented edges $(u,v)$ of $G$ as $uv\in G$ for short.
We denote the initial and terminal vertex of an oriented edge $e=uv$ as $\iota(e)=u$ and $\tau(e)=v$, respectively.
We write $e^{-1}$ for the oriented edge $\tau(e)\iota(e)$.
A \emph{walk} \marginpar{\footnotesize{walk}}
from $u$ to $v$ is any sequence of oriented edges $e_1 \cat e_2 \dots \cat e_n$ such that $\iota(e_1)=u, \tau(e_n)=v,$ and $\tau(e_i)=\iota(e_{i+1})$ for $i=1\dots n-1$ (the edges are not necessarily distinct).

A walk is \emph{closed} if $\iota(e_1) = \tau(e_n)$.
The \marginpar{$|W|$, $\eps$}
length of a walk, denoted $|W|$, is the number of edges in it.
We write $\eps$ for an empty walk (of length zero; formally there is a different empty walk for every possible starting vertex, but the endpoints of $\eps$ are clear from context).
For \marginpar{$W \cat W'$\\$W^{-1}$}
two walks $W=e_1 \cat \dots \cat e_n, W'=e'_1 \cat \dots \cat e'_m$ with $\tau(e_n)=\iota(e'_1)$, we write $W \cat W'$ for their concatenation $e_1 \cat \dots \cat e_n \cat e'_1 \cat \dots \cat e'_m$ and $W^{-1}$ for the reverse walk $e_n^{-1}\cat \dots \cat e_1^{-1}$. We identify an edge with a walk of length 1 by abuse of notation.

Note \marginpar{$\mu(W)$}
that if $\mu:G\to K$ is a graph homomorphism and $e=uv$ is an oriented edge in $G$, then $\mu(u)\mu(v)$ is an oriented edge in $K$ by definition, which we henceforth denote as $\mu(e)$.
Similarly if $W=e_1 \cat \dots \cat e_n$ is a walk in $G$, then $\mu(e_1) \cat \dots \cat \mu(e_n)$ is a walk in $K$, which we henceforth denote as $\mu(W)$.
In particular \marginpar{$W|_G$}
projections are homomorphisms, $|_G\colon G\times H \to G$, so if $W$ is a walk in $G\times H$, then $W|_G$ is a walk in $G$.
 
\subparagraph{Reducing, the fundamental groupoid}
Homotopy corresponds to the following notion for walks.
We call a walk \emph{reduced} if it contains no two consecutive edges $e_i,e_{i+1}$ such that $e_{i+1}=e_i^{-1}$ (in other words, it never backtracks).
\emph{Reducing} a walk means deleting any such two consecutive edges from the sequence.
It \marginpar{$\red{W}$}
can easily be seen that by arbitrarily reducing a walk $W$ until no more reductions are possible, one always obtains the same reduced walk, which we therefore denote as $\red{W}$, see Figure~\ref{fig:reducing}.
For \marginpar{$W \cdot W'$}
any two reduced walks $W$ from $u$ to $v$ and $W'$ from $v$ to $w$, we write $W \cdot W'$ for the walk from $u$ to $w$ obtained by their concatenation and reduction: $\red{W \cat W'}$.

\begin{figure}[H]
	\centering
	\newcommand{\dtengraph}{
	\begin{scope}[shift={(-1.3,0)}]		
		\node[H] (ha0) at (0:1.3) {};
		\node[H] (ha1) at (72:1.3) {};
		\node[H] (ha2) at (144:1.3) {};
		\node[H] (ha3) at (-144:1.3) {};		
		\node[H] (ha4) at (-72:1.3) {};	
		\draw[He]
		(ha0.center)--(ha1.center)
		(ha1.center)--(ha2.center)
		(ha2.center)--(ha3.center)
		(ha3.center)--(ha4.center)
		(ha4.center)--(ha0.center);
	\end{scope}	
	\begin{scope}[shift={(2.6,0)}]		
		\node[H] (hb0) at (180+0:1.3) {};
		\node[H] (hb1) at (180+72:1.3) {};
		\node[H] (hb2) at (180+144:1.3) {};
		\node[H] (hb3) at (180+-144:1.3) {};		
		\node[H] (hb4) at (180+-72:1.3) {};	
		\draw[He]
		(hb0.center)--(hb1.center)
		(hb1.center)--(hb2.center)
		(hb2.center)--(hb3.center)
		(hb3.center)--(hb4.center)
		(hb4.center)--(hb0.center);
	\end{scope}		
	\draw[He] (ha0.center)--(hb0.center);
}

\begin{tikzpicture}[scale=0.8]
	\dtengraph;
	
	\node[G1] (v1) at (ha2) {};
	\node[G] (v2) at (ha1) {};
	\node[G] (v3) at ($(ha0)+(-3pt,7pt)$) {};
	\node[G] (v4) at ($(hb0)+(0pt,7pt)$) {};
	\node[G] (v5) at ($(hb4)+(-72:-4pt)$) {};
	\node[G] (v6) at ($(hb3)+(-144:-4pt)$) {};
	\node[G] (v7) at ($(hb2)+(144:-4pt)$) {};
	\node[G] (v8) at ($(hb1)+(72:-4pt)$) {};
	\node[G] (v9) at ($(hb0)+(2pt,-7pt)$) {};
	\node[G] (v10) at ($(ha0)+(2pt,-5pt)$) {};
	\node[G] (v11) at ($(hb0)+(4pt,-3pt)$) {};
	\node[G] (v12) at ($(hb1)+(72:4pt)$) {};
	\node[G] (v13) at ($(hb2)+(144:4pt)$) {};
	\node[G] (v14) at ($(hb3)+(-144:4pt)$) {};
	\node[G] (v15) at ($(hb4)+(-72:4pt)$) {};
	\node[G] (v16) at ($(hb0)+(2pt,2pt)$) {};
	\node[G] (v17) at ($(ha0)+(-3pt,-0pt)$) {};
	\node[G1] (v18) at (ha4) {};					
	\foreach \x [remember=\x as \lastx (initially 1)] in {2,...,18}
	{ \draw[Ge,->] (v\lastx)--(v\x); }
	
\begin{scope}[shift={(7.5,0)}]
	\dtengraph;

	\node[G1] (v1) at (ha2) {};
	\node[G] (v2) at ($(ha1)+(0pt,-5pt)$) {};
	\node[G] (v3) at ($(ha0)+(-6pt,1pt)$) {};
	\node[G] (v4) at ($(ha1)+(2pt,4pt)$) {};
	\node[G] (v5) at ($(ha0)+(72:4pt)$) {};
	\node[G] (v6) at ($(hb0)+(0,4pt)$) {};
	\node[G] (v7) at ($(hb4)+(108:5pt)$) {};
	\node[G] (v8) at ($(hb3)+(72:4pt)$) {};
	\node[G] (v9) at ($(hb4)+(108:0pt)$) {};
	\node[G] (v10) at ($(hb0)+(4pt,-1pt)$) {};
	\node[G] (v11) at ($(hb1)+(-108:-4pt)$) {};
	\node[G] (v12) at ($(hb2)+(-36:-4pt)$) {};
	\node[G] (v13) at ($(hb3)+(-3pt,-4pt)$) {};
	\node[G] (v14) at ($(hb4)+(128:-6pt)$) {};
	\node[G] (v15) at ($(hb3)+(4pt,-2pt)$) {};
	\node[G] (v16) at ($(hb2)+(-36:4pt)$) {};
	\node[G] (v17) at ($(hb1)+(-108:4pt)$) {};
	\node[G] (v18) at ($(hb0)+(-2pt,-5pt)$) {};	
	\node[G] (v19) at ($(ha0)+(2pt,-5pt)$) {};	
	\node[G] (v20) at ($(ha4)+(-72:4pt)$) {};	
	\node[G] (v21) at ($(ha3)+(0pt,-0pt)$) {};		
	\node[G1] (v22) at ($(ha4)+(-72:-4pt)$) {};					
	\foreach \x [remember=\x as \lastx (initially 1)] in {2,...,22}
	{ \draw[Ge,->] (v\lastx)--(v\x); }	
\end{scope}

\begin{scope}[shift={(0,-3.6)}]
	\dtengraph;
	
	\node[G1] (v1) at (ha2) {};
	\node[G] (v2) at (ha1) {};
	\node[G] (v3) at (ha0) {};
	\node[G1] (v4) at (ha4) {};
	\foreach \x [remember=\x as \lastx (initially 1)] in {2,...,4}
	{ \draw[Ge,->] (v\lastx)--(v\x); }
		
\begin{scope}[shift={(7.5,0)}]
	\dtengraph;
	
	\node[G1] (v1) at (ha2) {};
	\node[G] (v2) at (ha1) {};
	\node[G] (v3) at ($(ha0)+(0pt,4pt)$) {};
	\node[G] (v4) at ($(hb0)+(0pt,4pt)$) {};
	\node[G] (v5) at (hb4) {};
	\node[G] (v6) at (hb3) {};
	\node[G] (v7) at (hb2) {};
	\node[G] (v8) at (hb1) {};
	\node[G] (v9) at ($(hb0)-(0pt,4pt)$)  {};						
	\node[G] (v10) at ($(ha0)-(0pt,4pt)$)  {};							
	\node[G1] (v11) at (ha4) {};
	\foreach \x [remember=\x as \lastx (initially 1)] in {2,...,11}
	{ \draw[Ge,->] (v\lastx)--(v\x); }
\end{scope}

\end{scope}

\draw[thick,->,decorate,decoration=snake] (0.6,-1) -- (0.6,-2.05);
\draw[thick,->,decorate,decoration=snake] (4.7,-1) -- (4.0,-2.15);
\end{tikzpicture}
	\vspace*{-0.5em}	
	\caption{Examples of two walks (in a graph on 10 vertices) which reduce to the same, bottom left one. The bottom right one is a different reduced walk; when its endpoints are fixed, it cannot be distorted as a curve to give any of the others.}	
	\label{fig:reducing}
\end{figure}
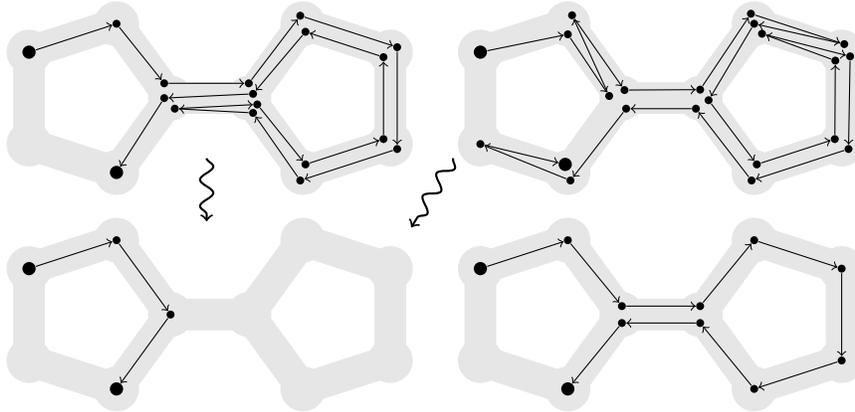
\pagebreak[2]

The set of all reduced walks in a graph (which is infinite, except for trees), together with inversion $^{-1}$ and concatenation $\cdot$, forms a group-like structure, see Figure~\ref{fig:groupoid}.
Formally, a \emph{groupoid} \marginpar{\footnotesize{groupoid}}
is a set $\Pi$ of \emph{elements} with a unary operation ${}^{-1} : \Pi \to \Pi$ and a partial binary operation $\cdot : \Pi \times \Pi \rightharpoonup \Pi$ (not necessarily defined for every pair of elements) that satisfies group axioms.
These are associativity (meaning if $P \cdot Q$ and $Q\cdot R$ are defined, then $P \cdot (Q \cdot R)$ and $(P\cdot Q) \cdot R$ are defined and equal; conversely if one of these expressions is defined, then both are defined and equal); $P \cdot P^{-1}$ and $P^{-1} \cdot P$ are always defined and equal; and if $P \cdot Q$ is defined, then $P \cdot Q \cdot Q^{-1} = P$ and $P^{-1} \cdot P \cdot Q = Q$.
Standard properties that hold in groups can easily be deduced, including $(P^{-1})^{-1}=P$ and $(P\cdot Q)^{-1} = Q^{-1} \cdot P^{-1}$.
A \emph{groupoid homomorphism} $\phi: \Pi\to \Pi'$ is a function such that $\phi(P^{-1})=\phi(P)^{-1}$, and if $P\cdot Q$ is defined in $\Pi$, then $\phi(P)\cdot \phi(Q)$ is defined and equal to  $\phi(P\cdot Q)$ in $\Pi'$.\looseness=-1

A straightforward check shows that the set of reduced walks in $G$ with operations ${}^{-1}$ and $\cdot$ forms a groupoid.
We \marginpar{$\fundPi{G}$\\$\fundpi{v}{G}$}
call it the \emph{fundamental groupoid} of $G$ and denote it $\fundPi{G}$.
For a vertex $v\in V(G)$, we write $\fundpi{v}{G}$ for the group formed by the same operations on closed reduced walks from $v$ in $G$ (and hence ending in $v$ too).
These are the same definitions as in e.g.~\cite{kwak2007graphs}, where a more detailed treatment is available.

\begin{figure}[H]
	\centering
	\newcommand{\dfivegraph}{
	\node[H] (ha0) at (0:1.3) {};
	\node[H] (ha1) at (72:1.3) {};
	\node[H] (ha2) at (144:1.3) {};
	\node[H] (ha3) at (-144:1.3) {};		
	\node[H] (ha4) at (-72:1.3) {};	
	\draw[He]
	(ha0.center)--(ha1.center)
	(ha1.center)--(ha2.center)
	(ha2.center)--(ha3.center)
	(ha3.center)--(ha4.center)
	(ha4.center)--(ha0.center);
}

\begin{tikzpicture}[scale=0.7]
	\dfivegraph;
	
	\node[G1] (v1) at (ha2) {};
	\node[G] (v2) at (ha1) {};
	\node[G] (v3) at (ha0) {};
	\node[G1] (v4) at (ha4) {};
	\foreach \x [remember=\x as \lastx (initially 1)] in {2,...,4}
	{ \draw[Ge,->] (v\lastx)--(v\x); }	
	
\node at (2.1,0) {\Huge$\cdot$};	

\begin{scope}[shift={(4,0)}]
	\dfivegraph;
	
	\node[G1] (v1) at ($(ha4)-(-72:4pt)$) {};
	\node[G] (v2) at (ha0) {};
	\node[G] (v3) at (ha1) {};
	\node[G] (v4) at (ha2) {};
	\node[G] (v5) at (ha3) {};		
	\node[G1] (v6) at ($(ha4)+(-72:4pt)$) {};
	\foreach \x [remember=\x as \lastx (initially 1)] in {2,...,6}
	{ \draw[Ge,->] (v\lastx)--(v\x); }
\end{scope}

\node at (6.1,0) {\huge$=$};

\begin{scope}[shift={(8,0)}]
	\dfivegraph;
	
	\node[G1] (v1) at (ha2) {};
	\node[G] (v2) at (ha3) {};
	\node[G1] (v3) at (ha4) {};
	\foreach \x [remember=\x as \lastx (initially 1)] in {2,...,3}
	{ \draw[Ge,->] (v\lastx)--(v\x); }
\end{scope}

\begin{scope}[shift={(0,-3.5)}]

	\dfivegraph;
	
	\node[G1] (v1) at (ha2) {};
	\node[G] (v2) at (ha3) {};
	\node[G1] (v3) at (ha4) {};
	\foreach \x [remember=\x as \lastx (initially 1)] in {2,...,3}
	{ \draw[Ge,->] (v\lastx)--(v\x); }

\node at (2.1,0) {\Huge$\cdot$};

\begin{scope}[shift={(4,0)}]
	\dfivegraph;
	
	\node[G1] (v1) at ($(ha4)-(-72:4pt)$) {};
	\node[G] (v2) at (ha0) {};
	\node[G] (v3) at (ha1) {};
	\node[G] (v4) at (ha2) {};
	\node[G] (v5) at (ha3) {};		
	\node[G1] (v6) at ($(ha4)+(-72:4pt)$) {};
	\foreach \x [remember=\x as \lastx (initially 1)] in {2,...,6}
	{ \draw[Ge,->] (v\lastx)--(v\x); }
\end{scope}

\node at (6.1,0) {\huge$=$};

\begin{scope}[shift={(8,0)}]
	\dfivegraph;
	
	\node[G1] (v1) at ($(ha2)-(144:4pt)$) {};
	\node[G] (v2) at ($(ha3)-(-144:4pt)$) {};
	\node[G] (v3) at ($(ha4)-(-72:4pt)$) {};
	\node[G] (v4) at (ha0) {};
	\node[G] (v5) at (ha1) {};
	\node[G] (v6) at ($(ha2)+(144:4pt)$) {};
	\node[G] (v7) at ($(ha3)+(-144:4pt)$) {};
	\node[G1] (v8) at ($(ha4)+(-72:4pt)$) {};
	\foreach \x [remember=\x as \lastx (initially 1)] in {2,...,8}
	{ \draw[Ge,->] (v\lastx)--(v\x); }
\end{scope}	

\end{scope}	
\end{tikzpicture}
	\vspace*{-0.5em}
	\caption{Examples of $\cdot$ multiplication in the fundamental groupoid of $C_5$.} 
	\label{fig:groupoid}
\end{figure}
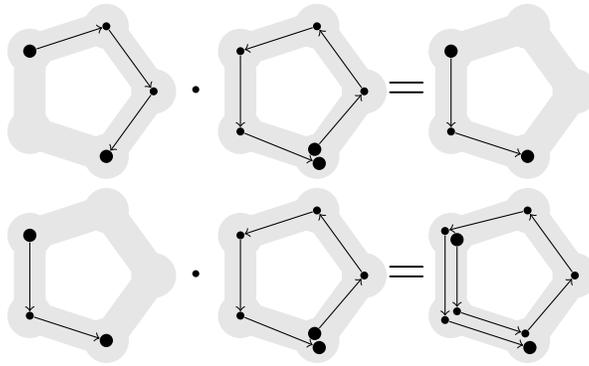

\section{Topological invariants of cycles}\label{sec:commute}
A \emph{square} \marginpar{\footnotesize{square}}
in a graph $G$ is a quadruple of vertices $v_1,v_2,v_3,v_4$ such that $\{v_1,v_2\}, \{v_2,v_3\}, \{v_3,v_4\},$ and $\{v_4,v_1\}$ are edges of $G$.
A square is \emph{trivial} if $v_1=v_3$ or $v_2=v_4$, \emph{non-trivial} otherwise (since we work with graphs without loops only, a square is non-trivial if and only if its vertices are pairwise different).
A graph is \emph{square-free} if it has no non-trivial squares.

The fundamental groupoid $\fundPi{G}$ turns out to be too fine-grained for two reasons.
One is that we would like $\fundPi{G\times H}$ to have something in common with $\fundPi{G}\times \fundPi{H}$: intuitively, the product of two cycles should behave like a torus, see later Figure~\ref{fig:torus}.
Another reason is recoloring: if $v_1,v_2,v_3,v_4$ is a square, a walk going through $v_1,v_2,v_3$ can be changed to go through $v_1,v_4,v_3$ instead, by changing just one value, so we want to allow such a replacement in $\fundPi{G}$ too.

Therefore, for a graph $G$, we define $\sim$ \marginpar{$\sim$}
to be the smallest equivalence relation between walks in $G$ which makes a walk $W$ equivalent to its reduction $\red{W}$, and makes $W \cat v_{1} v_{2} \cat v_{2}v_{3} \cat  W'$ equivalent to $W \cat v_{1} v_{4} \cat v_{4}v_{3} \cat W'$ for all walks $W,W'$ and every non-trivial square $v_{1}, v_2, v_{3}, v_{4}$ in $G$.
In other words, two walks are equivalent under $\sim$ if and only if one can be obtained from the other by a series of \emph{elementary steps},
\marginpar{\footnotesize{elementary\\step}}
each step consisting of either deleting or introducing a subwalk of the form $e\cat e^{-1}$ (for some edge $e$) or replacing a subwalk $v_{1} v_{2} \cat v_{2}v_{3}$ with $v_{1} v_{4} \cat v_{4}v_{3}$ (for some non-trivial square $v_{1}, v_2, v_{3}, v_{4}$ in $G$).
Note the words `non-trivial' can be dropped without changing the definition, because if $v_2=v_4$, a subwalk $v_1 v_2 \cat v_2 v_3$ is of course equal to $v_1 v_4 \cat v_4 v_3$, and if $v_1=v_3$, a subwalk $v_1 v_2 \cat v_2 v_3$ can be deleted and a subwalk $v_1 v_4 \cat v_4 v_3$ can be introduced (in other words, they both reduce to $\eps$).

We denote the equivalence class of $P$ under $\sim$ as $\sed{P}$.
Two walks equivalent under $\sim$ must have the same initial and final vertex. Since $P \sim P'$ and $Q\sim Q'$ implies $P \cat Q \sim P' \cat Q'$ for walks $P,P',Q,Q'$, and since $P \sim \red{P}$, we have that  $P \sim P'$ and $Q\sim Q'$ implies $P \cdot Q \sim P' \cdot Q'$ for reduced walks.
Also $P \sim P'$  implies $P^{-1}\sim P'^{-1}$.\marginpar{$\coarsePi{G}$,\\ $\sed{P}$}
Therefore the quotient groupoid $\coarsePi{G}$ (and group $\coarsepi{v}{G}$)
can be defined naturally on equivalence classes of walks under $\sim$.
We note a similar quotient was considered in~\cite{simons2012generalized}.

The equivalence of $\fundPiSymbol$ and $\coarsePiSymbol$ for square-free graphs follows from definitions:

\begin{lemma}\label{lem:squarefreeIso}
	Let $K$ be a square-free graph. Then the function from $\coarsePi{K}$ to $\fundPi{K}$ mapping $\sed{W}$ to $\red{W}$ for any walk $W$ in $K$ is well defined and is a groupoid isomorphism.
\end{lemma}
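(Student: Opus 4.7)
The plan is to use the square-free hypothesis to collapse the two types of elementary steps defining $\sim$ down to just one: since $K$ has no non-trivial squares, the second elementary step (replacing $v_1v_2\cat v_2v_3$ with $v_1v_4\cat v_4v_3$ along a non-trivial square) is simply unavailable in $K$. Consequently, $W\sim W'$ in $K$ holds if and only if $W$ and $W'$ are related by a sequence of insertions and deletions of backtracks $e\cat e^{-1}$.

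First I would show that the map $\sed{W}\mapsto \red{W}$ is well-defined. By the observation above, if $W\sim W'$ then $W$ and $W'$ differ by a chain of single backtrack insertions/deletions. Each such step preserves the outcome of exhaustive reduction, because $\red{\cdot}$ was already noted (just before its definition) to yield a unique normal form independent of the order in which backtracks are removed. Hence $\red{W}=\red{W'}$. For injectivity, if $\red{W}=\red{W'}$, then $W\sim \red{W}=\red{W'}\sim W'$ by the defining property of $\sim$, so $\sed{W}=\sed{W'}$. Surjectivity is immediate: every reduced walk $R$ in $K$ is in particular a walk, and $\sed{R}\mapsto \red{R}=R$.

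It remains to verify the two groupoid axioms. Inversion: the class $\sed{W}^{-1}$ is represented by $W^{-1}$, which maps to $\red{W^{-1}}$; a direct inspection of the reduction procedure shows $\red{W^{-1}}=(\red{W})^{-1}$, which is $\red{W}^{-1}$ in $\fundPi{K}$. Concatenation: whenever $\sed{W_1}\cdot\sed{W_2}$ is defined, it is represented by $W_1\cat W_2$, whose image under the map is $\red{W_1\cat W_2}$. This equals $\red{\red{W_1}\cat\red{W_2}}$, which by the definition of $\cdot$ in $\fundPi{K}$ is exactly $\red{W_1}\cdot\red{W_2}$. Hence the map is a groupoid homomorphism, and combined with bijectivity it is an isomorphism.

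The only point that requires any care is the very first observation: making precise that the clause "for every non-trivial square" in the definition of $\sim$ produces no additional identifications in a square-free graph, and that this is unaffected by the remark that "non-trivial" can be dropped (since any elementary step along a trivial square is itself a combination of reduction and anti-reduction steps). Once this is clear, the remainder is a routine verification that $\coarsePiSymbol$ and $\fundPiSymbol$ agree in the absence of squares—there is no genuine obstacle.
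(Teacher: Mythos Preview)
Your proof is correct and matches the paper's approach: the paper gives no explicit proof, stating only that the equivalence ``follows from definitions,'' and what you have written is precisely the unpacking of that remark. Your observation that square-freeness eliminates the square-replacement elementary step, leaving only backtrack insertions/deletions (which preserve $\red{W}$), is exactly the intended content.
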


Quite naturally, a graph homomorphism implies a groupoid homomorphism for $\coarsePiSymbol$.

\begin{lemma}\label{lem:homo}
	Let $\mu: G\to K$ be a graph homomorphism.
	Then the function from $\coarsePi{G}$ to $\coarsePi{K}$ mapping $\sed{W}$ to $\sed{\mu(W)}$ for any walk $W$ in $G$ is well defined and is a groupoid homomorphism.
\end{lemma}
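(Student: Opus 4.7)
The plan is to reduce the proof to routine verification on elementary steps. The definition of $\coarsePi{G}$ is a quotient by $\sim$, so the only non-trivial content of the lemma is well-definedness: if $W\sim W'$ in $G$, then $\mu(W)\sim \mu(W')$ in $K$. Once that is established, the fact that $\sed{W}\mapsto\sed{\mu(W)}$ respects $\cdot$ and ${}^{-1}$ follows immediately from the symbolic compatibility of $\mu$ with concatenation and edge reversal.

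The first step is to record that because $\mu$ maps edges of $G$ to edges of $K$, it takes oriented edges to oriented edges, so for every walk $W=e_1\cat\dots\cat e_n$ in $G$ the image $\mu(W)=\mu(e_1)\cat\dots\cat\mu(e_n)$ is a walk in $K$ of the same length, and the identities $\mu(W\cat W')=\mu(W)\cat\mu(W')$ and $\mu(W^{-1})=\mu(W)^{-1}$ hold by inspection.

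For well-definedness, it suffices to show that each of the two kinds of elementary steps defining $\sim$ in $G$ is carried by $\mu$ to an elementary step in $K$. For the first kind, inserting or deleting a subwalk $e\cat e^{-1}$ in $W$ corresponds under $\mu$ to inserting or deleting $\mu(e)\cat\mu(e)^{-1}$ in $\mu(W)$; since $\mu(e)$ is an oriented edge of $K$, this is itself an elementary step. For the second kind, replacing $v_1v_2\cat v_2v_3$ with $v_1v_4\cat v_4v_3$ in $W$, where $v_1,v_2,v_3,v_4$ is a square of $G$, corresponds under $\mu$ to replacing $\mu(v_1)\mu(v_2)\cat\mu(v_2)\mu(v_3)$ with $\mu(v_1)\mu(v_4)\cat\mu(v_4)\mu(v_3)$; because $\mu$ is a graph homomorphism, the four pairs $\{\mu(v_i),\mu(v_{i+1})\}$ are edges of $K$, so $\mu(v_1),\mu(v_2),\mu(v_3),\mu(v_4)$ is a square of $K$. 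The only subtlety is that this image square may be trivial even when the original square is not (for example, $\mu(v_2)=\mu(v_4)$ is possible), but by the remark following the definition of $\sim$, trivial squares give rise to elementary steps as well (they reduce either to an identity or to the backtrack case), so the step is valid in $K$ regardless. Hence $\mu(W)\sim\mu(W')$ whenever $W\sim W'$, and the assignment $\sed{W}\mapsto\sed{\mu(W)}$ is a well-defined function $\coarsePi{G}\to\coarsePi{K}$.

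Finally, groupoid homomorphism properties are immediate: $\sed{W}^{-1}=\sed{W^{-1}}$ is sent to $\sed{\mu(W^{-1})}=\sed{\mu(W)^{-1}}=\sed{\mu(W)}^{-1}$; and if $\sed{W}\cdot\sed{W'}$ is defined in $\coarsePi{G}$, i.e. the endpoints of $W$ and $W'$ agree, then the endpoints of $\mu(W)$ and $\mu(W')$ agree too, so $\sed{\mu(W)}\cdot\sed{\mu(W')}$ is defined and both composites coincide with $\sed{\mu(W\cat W')}=\sed{\mu(W)\cat\mu(W')}$. I do not expect a real obstacle in this proof; the only point where one must pause is handling the possibility that $\mu$ collapses a non-trivial square in $G$ to a trivial square in $K$, which is precisely what the parenthetical remark after the definition of $\sim$ was set up to absorb.
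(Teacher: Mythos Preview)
Your proof is correct and follows essentially the same approach as the paper: both verify well-definedness by checking that $\mu$ carries each type of elementary step to an elementary step in $K$, and then read off the groupoid homomorphism properties from the symbolic identities $\mu(W\cat W')=\mu(W)\cat\mu(W')$ and $\mu(W^{-1})=\mu(W)^{-1}$. Your explicit mention of the trivial-square subtlety is a slight elaboration the paper leaves implicit, but the argument is otherwise the same.
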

\begin{proof}
	We want to show that if $W\sim W'$ for two walks in $G$, then $\mu(W) \sim \mu(W')$. It suffices to check this when $W$ and $W'$ differ by one elementary step.
	If $W'$ is obtained from $W$ by deleting (or introducing) a subsequence $g_0 g_1 \cat g_1 g_0$ for some $g_0g_1 \in G$, then $\mu(W')$ is obtained from $\mu(W)$ by deleting (or introducing) the subsequence $\mu(g_0g_1) \cat \mu(g_1 g_0)$. Hence $\mu(W)\sim \mu(W')$.
	If $W'$ is obtained from $W$ by replacing a subsequence $g_1 g_2 \cat g_2 g_3$ with $g_1 g_4 \cat g_4 g_3$ for some square $g_1,g_2,g_3,g_4$ in $G$, then $\mu(W')$ is obtained from $\mu(W)$ by replacing the corresponding images of $\mu$. Since $\mu$ is a graph homomorphism, $\mu(g_1),\mu(g_2),\mu(g_3),\mu(g_4)$ is a square in $K$, and hence $\mu(W)\sim \mu(W')$.
	Thus $W \sim W'$ implies $\mu(W)\sim\mu(W')$, meaning the function is well defined.
	It is indeed a groupoid homomorphism, because $\mu(W^{-1})=\mu(W)^{-1}$, and $\mu(W \cdot W') \sim \red{\mu(W \cdot W')}= \red{\mu(\red{W \cat W'})} = \red{\mu(W \cat W')} = \red{\mu(W)\cat\mu(W')} = \mu(W) \cdot \mu(W')$.
\end{proof}

A crucial observation is that if $\red{W}=\red{W'}$ or more generally $W\sim W'$, then the lengths of $W$ and $W'$ have the same parity (this follows immediately by considering elementary steps).
We \marginpar{\footnotesize{length parity}}
can hence speak of the parity of an element of $\coarsePi{G}$.

We would like to think of $\coarsePi{G\times H}$ as being isomorphic to $\coarsePi{G}\times\coarsePi{H}$, but one may see that by projecting a walk in $G\times H$ to $G$ and $H$, we can never get a pair of walks of different parity.
Except for this problem (which could be resolved by taking $G\times K_2$ instead of $G$), they are in fact equivalent, and we will only need the following slightly weaker lemma.
Unfortunately the proof is quite technical; note also the lemma would not be true if we considered $\fundPi{G}$ instead of $\coarsePi{G}$.

\begin{lemma}\label{lem:product}
	Let $G,H$ be graphs.
	The function $\phi:\coarsePi{G\times H} \to \coarsePi{G} \times \coarsePi{H}$ mapping $\sed{W}$ to $\left(\sed{W|_G},\sed{W|_H}\right)$ for any walk $W$ in $G\times H$ is well defined and gives an injective groupoid homomorphism.
\end{lemma}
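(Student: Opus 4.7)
The well-definedness and groupoid-homomorphism property of $\phi$ come for free from Lemma~\ref{lem:homo}: the projections $|_G\colon G\times H\to G$ and $|_H\colon G\times H\to H$ are graph homomorphisms, and $\phi$ is simply the pairing of the two induced groupoid homomorphisms on $\coarsePiSymbol$.

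For injectivity I would first reduce to the case of trivial loops: given walks $W,W'$ with $\phi(\sed{W})=\phi(\sed{W'})$, the endpoints of $W$ and $W'$ must coincide, so $U:=W\cat W'^{-1}$ is a closed walk at some $v=(g,h)$ with $\sed{U|_G}=\sed{\eps_g}$ and $\sed{U|_H}=\sed{\eps_h}$; it then suffices to show $\sed{U}=\sed{\eps_v}$. The plan is to transform $U$ by $\sim$-moves in $G\times H$ so that its $G$-projection becomes the empty walk at $g$; since $|U|=|U|_G|$, the resulting walk necessarily has length zero and equals $\eps_v$.

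To this end, I fix a sequence of elementary moves in $G$ reducing $U|_G$ to $\eps_g$ and lift them one by one. A \emph{square move} $g_1 g_2\cat g_2 g_3\leadsto g_1 g_4\cat g_4 g_3$ in $G$ can always be lifted: given the corresponding subwalk $(g_1,h_i)(g_2,h_{i+1})(g_3,h_{i+2})$ of $U$, the vertices $(g_1,h_i),(g_2,h_{i+1}),(g_3,h_{i+2}),(g_4,h_{i+1})$ form a (possibly trivial) square in $G\times H$, using that $h_{i+1}$ is a common neighbour of $h_i$ and $h_{i+2}$; the analogous square move in $G\times H$ makes the desired change to the $G$-projection and leaves $U|_H$ untouched. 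A \emph{backtrack deletion} $g_1 g_2\cat g_2 g_1$ at position $i$ of $U|_G$ can be lifted directly whenever $h_i=h_{i+2}$, since $(g_1,h_i)(g_2,h_{i+1})(g_1,h_{i+2})$ is then itself a backtrack in $G\times H$.

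The delicate case---and the main obstacle I expect---is a backtrack deletion in $G$ with $h_i\neq h_{i+2}$. Here the hypothesis $\sed{U|_H}=\sed{\eps_h}$ must enter symmetrically: one has analogous lifts from moves reducing $U|_H$ to $\eps_h$ (with $G$ and $H$ swapping roles, backtrack deletions in $H$ lifting directly whenever $g_j=g_{j+2}$). To formalize the interaction I would induct on the pair $(d_G(U),d_H(U))$ of minimal reduction lengths in $G$ and $H$, arguing that whenever $d_G(U)+d_H(U)>0$ at least one elementary move on either side is liftable, and that every such lift strictly decreases the sum. The delicate verification is that some liftable move is always available---intuitively, the obstruction $h_i\neq h_{i+2}$ at one spot is witnessed elsewhere in $U|_H$ as progress that $U|_H$'s own reduction must make. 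Termination at $d_G(U)=d_H(U)=0$ forces $U|_G=\eps_g$, whence $U=\eps_v$ and $\phi$ is injective.
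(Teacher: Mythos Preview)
Your treatment of well-definedness and the groupoid-homomorphism property is cleaner than the paper's direct verification: invoking Lemma~\ref{lem:homo} for each projection is exactly right.

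The injectivity argument, however, has a genuine gap at precisely the point you flag as delicate. Your plan is to lift elementary moves from reduction sequences of $U|_G$ and $U|_H$ into $G\times H$, hoping that at every stage some move on one side or the other is liftable. This can fail. Let $G$ be a path $g_0g_1g_2$ and $H$ a path $h_1h_0h_2$ (so both are square-free), and take
\[
U\;=\;(g_0,h_0)(g_1,h_1)\cat(g_1,h_1)(g_2,h_0)\cat(g_2,h_0)(g_1,h_2)\cat(g_1,h_2)(g_0,h_0).
\]
Then $U|_G=g_0g_1\cat g_1g_2\cat g_2g_1\cat g_1g_0$ and $U|_H=h_0h_1\cat h_1h_0\cat h_0h_2\cat h_2h_0$ both reduce to~$\eps$. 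The only backtrack in $U|_G$ is the central $g_1g_2\cat g_2 g_1$, where the $H$-coordinates are $h_1\neq h_2$; the two backtracks in $U|_H$ sit where the $G$-coordinates are $g_0\neq g_2$. No backtrack on either side lifts, and since $G,H$ contain no squares there is nothing else to try: your induction is stuck at its very first step. Yet $U\sim\eps$ does hold in $G\times H$: the quadruple $(g_0,h_0),(g_1,h_1),(g_2,h_0),(g_1,h_2)$ is a non-trivial square there, and the corresponding move turns $U$ into a walk that collapses by two backtrack deletions. The crucial step is a square move in $G\times H$ that projects to a \emph{trivial} square in each factor---such moves are invisible to any scheme that only lifts moves from $G$ or from $H$.

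The paper avoids this obstacle by constructing an explicit partial inverse. For walks $P$ in $G$ and $Q$ in $H$ of equal length parity it defines $\texttt{join}(P,Q)$ in $G\times H$ by padding the shorter walk with a back-and-forth edge, and then proves that $P\sim P'$ (with $Q$ fixed) implies $\texttt{join}(P,Q)\sim\texttt{join}(P',Q)$. The heart of that argument is an induction on the length of the suffix of $P$ after a backtrack insertion, and the inductive step is exactly a square move of the trivial-in-both-factors kind above: it commutes the inserted backtrack one edge toward the end of $P$, where it eventually becomes a genuine backtrack of the join. This commutation trick is the mechanism your sketch is missing.
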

\begin{proof}
	We first show that if $W \sim W'$ for two walks $W,W'$ in $G\times H$, then $W|_G \sim W'|_G$ (and symmetrically $W|_H \sim W|_H'$).
	It suffices to show this when $W$ and $W'$ differ by an elementary step.
	If $W'$ is obtained from $W$ by a single reduction deleting (or introducing) a subwalk $(g_1,h_1)(g_2,h_2)\cat (g_2,h_2)(g_1,h_1)$, then $W'|_G$ is obtained from $W|_G$ by a single reduction deleting (or introducing) $g_1 g_2 \cat g_2 g_1$, and hence $W|_G \sim W'|_G$.
	If $W'$ is obtained from $W$ by a replacing a subwalk $v_1v_2 \cat v_2 v_3$ with $v_1 v_4 \cat v_4 v_3$ for some square $v_1,v_2,v_3,v_4$ in $G\times H$,
	then $W'|_G$ is obtained from replacing the corresponding subsequences after projection, where $v_1|_G,v_2|_G,v_3|_G,v_4|_G$ is a square in $G$, and hence $W|_G \sim W'|_G$.
	Thus $W \sim W'$ implies $W|_G \sim W'|_G$ and $W|_H \sim W'|_H$, so $\phi(\sed{W}) = (\sed{W|_G},\sed{W|_H})$ unambiguously defines a function from $\coarsePi{G\times H}$ to $\coarsePi{G} \times \coarsepi{h}{H}$.
	Clearly $(W\cdot W')|_G \sim \red{(W\cdot W')|_G} = \red{\red{W \cat W'}|_G} = \red{W \cat W'|_G} = \red{\red{W}|_G \cat \red{W'}|_G} = \red{W}|_G \cdot \red{W'}|_G$ and $W^{-1}|_G = W|_G^{-1}$, so $\phi$ defines a groupoid homomorphism. It remains to show that $\phi$ is injective.

	\def\join{\texttt{join}}
	For walks $P$ in $G$ and $Q$ in $H$ such that $|P|=|Q| \mod 2$, define $\join(P,Q)$ as the following walk in $G\times H$. If $|P|\geq |Q|$, let $\join(P,Q)$ be the walk whose projection to $G$ is $P$ and whose projection to $H$ is $Q \cat e^{-1} \cat e \cat \dots \cat e^{-1}\cat e$, where $e$ is an arbitrary edge ending in the same vertex as $Q$, repeated $|P|-|Q|$ times here.
	Otherwise, if $|P|<|Q|$, define $\join(P,Q)$ analogously, extending $P$ with an arbitrary edge $e$ so that it's length matches the length of $Q$,
	that is, $\join(P,Q)|_G = P \cat e^{-1} \cat e \dots \cat e^{-1} \cat e$ and $\join(P,Q)|_H = Q$.
	
	Observe that $\red{\join(P,Q)|_G} = \red{P}$, $\red{\join(P,Q)|_H} = \red{Q}$ for every pair $P,Q$ for which it is defined, and $W = \join(W|_G, W|_H)$ for every walk $W$ in $G\times H$.
	We claim that for any walks $P,P',Q,Q'$, if $P \sim P'$ and $Q\sim Q'$ then $\join(P,Q) \sim \join(P',Q')$ if both joins are defined.
	It suffices to show this in the case $P$ differs from $P'$ with an elementary step and $Q'=Q$ (we can show the case $Q$ differs from $Q'$ and $P=P'$ in a symmetric way).
	
	If $P'$ is obtained from $P$ by replacing a subwalk $g_1 g_2 \cat g_2 g_3$ with $g_1 g_4 \cat g_4 g_2$ for some square $g_1,g_2,g_3,g_4$ in $G$,
	then $|P|=|P'|$, so $\join(P',Q)$ is obtained from $\join(P,Q)$ by replacing a subwalk $(g_1,h_1) (g_2,h_2) \cat (g_2,h_2) (g_3,h_3)$ with $(g_1,h_1) (g_4,h_2) \cat (g_4,h_2) (g_3,h_3)$ for vertices $h_1,h_2,h_3 \in V(H)$ (that is, the projections to $G$ are as described and projections to $H$ are unchanged).
	Then $(g_1,h_1),(g_2,h_2),(g_3,h_3),(g_4,h_2)$ is a square in $G\times H$, so $\join(P,Q)\sim\join(P',Q)$.
	
	\smallskip
	If $P'$ is obtained from $P$ by introducing a subwalk $g_1 g_2 \cat g_2 g_1$ for some $g_1,g_2\in V(G)$,
	then let $P=P_1\cat P_2$ and $P'=P_1 \cat g_1 g_2 \cat g_2 g_1 \cat P_2$ for some walks $P_1,P_2$ in $G$.
	We prove the claim by induction on the length $P_2$.
	
	For the inductive step, let $P_2$ be non-empty, so $P_2 = g_1 g_x \cat P_3$ for some edge $g_1 g_x$ and walk $P_3$ of $G$.
	Define an intermediate walk $P''=P_1 \cat g_1 g_x \cat g_x g_1 \cat P_2$.
	Then $P'$ and $P''$ have the same lengths, so $\join(P',Q)=\join(P_1 \cat g_1 g_2 \cat g_2 g_1 \cat P_2, Q)$
	is obtained from $\join(P'',Q)$ by replacing a subwalk 
	$(g_1,h_1) (g_2,h_2) \cat (g_2,h_2) (g_1,h_3)$
	with a subwalk $(g_1,h_1) (g_x,h_2) \cat (g_x,h_2) (g_1,h_3)$ for some $h_1,h_2,h_3 \in V(H)$.
	Note $(g_1,h_1), (g_2,h_2), (g_1,h_3), (g_x,h_2)$ is a square in $G\times H$, so  $\join(P',Q) \sim \join(P'',Q)$.
	Since $P''=P_1 \cat g_1 g_x \cat g_x g_1 \cat g_1 g_x \cat P_3$ can be obtained from $P=P_1 \cat g_1 g_x \cat P_3$ by introducing $g_x g_1 \cat g_1 g_x$ before $P_3$, whose length is shorter than $P_2$, we know by inductive assumption that $\join(P,Q)\sim\join(P'',Q)$ and hence $\join(P,Q)\sim\join(P',Q)$.
	
	For the basis of the induction assume now that $P_2$ is empty.
	Suppose first that $P=P_1$ is strictly shorter than $Q$.
	Then by definition of $\join$, $\join(P,Q) = \join(P \cat e^{-1} \cat e, Q)$  for some edge $e$ with the same endpoint as $P$, that is, $e=(g_x,g_1)$ for some $g_x \in V(G)$.
	In this case $\join(P',Q)=\join(P \cat g_1 g_2 \cat g_2 g_1, Q)$ can be obtained from $\join(P,Q)$ by replacing the subwalk whose projection to $G$ is $e^{-1}\cat e$, namely $(g_1,h_1) (g_x,h_2) \cat (g_x,h_2) (g_1,h_3)$, with a walk whose projection to $G$ is the introduced fragment (and the projection to $H$ is unchanged), that is, $(g_1,h_1)(g_2,h_2) \cat (g_2,h_2) (g_1,h_3)$.
	Note $(g_1,h_1), (g_x,h_2), (g_1,h_3), (g_2,h_2)$ is a square in $G\times H$, so $\join(P,Q) \sim \join(P',Q)$.
	
	Suppose now that $P_2$ is empty and $P=P_1$ is at least as long as $Q$.
	Then $\join(P',Q)=\join(P\cat g_1 g_2 \cat g_2 g_1,Q)$ is obtained from $\join(P,Q)$ by appending $(g_1,h_1)(g_2,h_2) \cat (g_2,h_2)(g_1,h_1)$ to it, where $e=h_1h_2$ is the edge with which $Q$ would be extended in the definition of $\join$.
	Thus $\join(P,Q)\sim\join(P',Q)$.
	\smallskip
	
	This concludes the proof that for any walks $P,P',Q,Q'$, if $P \sim P'$ and $Q\sim Q'$, then $\join(P,Q) \sim \join(P',Q')$, if both joins are defined.
	Thus we can unambiguously define the function $\join'(\sed{P},\sed{Q}) := \sed{\join(P,Q)}$ for walks $P,Q$ whose lengths have the same parity. Since $\join'(\phi(\sed{W})) = \sed{\join(W|_G,W|_H)} = \sed{W}$, the function $\join'$ is the inverse of $\phi$.
	Thus $\phi$ is an injection.
	In fact, $\phi$ gives an isomorphism between $\coarsepi{(g,h)}{G\times H}$ and the subgroup of $\coarsepi{g}{G}\times \coarsepi{h}{H}$ formed by those pairs $(\sed{P},\sed{Q})$ where $|P|$ and $|Q|$ have the same parity.
\end{proof}

\pagebreak
The only cases for which we will use the above `product lemma' are the next two corollaries, focusing on closed walks of $G$ and $H$.
For \marginpar{$\lcycle{C}{h_0 h_1}$}
a closed walk $C$ in a graph $G$ and an oriented edge $h_0h_1$ in a graph $H$, we define $\lcycle{C}{h_0 h_1}$ as the closed walk in $G\times H$ whose projection to $G$ is $C\cat C$ and whose projection to $H$ is $h_0 h_1 \cat h_1 h_0$ repeated $|C|$ times.
For a closed walk $D$ in $H$ and $g_0g_1\in G$ we define $\rcycle{g_0 g_1}{D}$ symmetrically.
Elements of the form $\sed{\mu(\lcycle{C}{h_0 h_1})}$ in $\coarsePi{K}$ will be our central tool.
The `product lemma' then translates to the following.

\begin{corollary}\label{cor:factor}
	Let $\mu: G\times H \to K$.
	Let $g_0 g_1 \in G$ and $h_0 h_1\in H$.	
	Let $C\in\fundpi{(g_0,h_0)}{G\times H}$.
	Then $\pexp{\sed{\mu(C)}}{2}=\sed{\mu(\lcycle{{C|_G}}{h_0 h_1})} \cdot \sed{\mu(\rcycle{g_0 g_1}{C|_H})}$.
\end{corollary}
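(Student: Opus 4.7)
The plan is to reduce the identity to an equivalence in $\coarsePi{G\times H}$, then push it through $\mu$ using Lemma~\ref{lem:homo}. Since $\mu$ induces a groupoid homomorphism, both sides can be rewritten as single equivalence classes: $\pexp{\sed{\mu(C)}}{2}=\sed{\mu(C\cat C)}$, and $\sed{\mu(\lcycle{C|_G}{h_0h_1})}\cdot\sed{\mu(\rcycle{g_0g_1}{C|_H})}=\sed{\mu(\lcycle{C|_G}{h_0h_1}\cat\rcycle{g_0g_1}{C|_H})}$. So it suffices to establish
$$C\cat C \ \sim\ \lcycle{C|_G}{h_0h_1}\cat\rcycle{g_0g_1}{C|_H}$$
as closed walks at $(g_0,h_0)$ in $G\times H$; applying Lemma~\ref{lem:homo} to $\mu$ will then yield the desired identity in $\coarsePi{K}$.

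To establish the $\sim$-equivalence above, I would invoke the injectivity of the groupoid homomorphism $\phi:\coarsePi{G\times H}\to\coarsePi{G}\times\coarsePi{H}$ provided by Lemma~\ref{lem:product}: it suffices to verify that the $G$-projections of the two walks are $\sim$-equivalent in $G$ and, symmetrically, that the $H$-projections are $\sim$-equivalent in $H$. Unpacking the definitions of $\lcycle{\cdot}{\cdot}$ and $\rcycle{\cdot}{\cdot}$, the $G$-projection of the right-hand walk is $C|_G\cat C|_G$ followed by $|C|$ copies of $g_0g_1\cat g_1g_0$; this trailing piece is a concatenation of edge-reversal pairs and reduces entirely to $\eps$. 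Thus this projection equals $\red{C|_G\cat C|_G}$, which is the reduction of $(C\cat C)|_G=C|_G\cat C|_G$, so the two projections are $\sim$-equivalent. The $H$-projection computation is completely symmetric: the $H$-projection of the right-hand walk is $|C|$ copies of $h_0h_1\cat h_1h_0$ (reducing to $\eps$) followed by $C|_H\cat C|_H$, matching $(C\cat C)|_H$ after reduction.

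There is no real obstacle here beyond keeping the projections straight—essentially all the work is delegated to the two preceding lemmas. Lemma~\ref{lem:homo} provides the push-forward through $\mu$, and Lemma~\ref{lem:product}, via the injectivity of $\phi$, is what makes it legitimate to verify an equality in $\coarsePi{G\times H}$ by checking the two projections separately. The corollary is the combinatorial shadow of the topological fact that on the torus $S^1\times S^1$, the square of a loop is homotopic to the product of its two coordinate loops (each winding around once); the content here is that this remains true after projection by $\mu$ into the (possibly much more complicated) space $\coarsePi{K}$.
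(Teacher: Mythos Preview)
Your proposal is correct and follows essentially the same approach as the paper: both arguments first establish the identity $\sed{C}^{\cdot 2}=\sed{\lcycle{C|_G}{h_0h_1}}\cdot\sed{\rcycle{g_0g_1}{C|_H}}$ in $\coarsePi{G\times H}$ by computing the $G$- and $H$-projections and invoking the injectivity of $\phi$ from Lemma~\ref{lem:product}, then push the result through $\mu$ via Lemma~\ref{lem:homo}. The only cosmetic difference is that the paper computes $\phi$ of both sides explicitly and factors inside $\coarsePi{G}\times\coarsePi{H}$, whereas you phrase it as verifying the two projections match; the content is identical.
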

\begin{proof}
	Since $h_0 h_1 \cat h_1 h_0 \sim \eps$ in $H$ and $\phi$ is a groupoid homomorphism by Lemma~\ref{lem:product}, we have
	\begin{gather*}
	\phi(\sed{C^2}) = (\sed{C^2|_G}, \sed{C^2|_H}) = (\sed{C^2|_G}, \sed{\eps}) \cdot (\sed{\eps}, \sed{C^2|_H}) =\\
	= (\sed{{C|_G}^2}, \sed{h_0 h_1 \cat h_1 h_0 \cat \dots}) \cdot (\sed{g_0 g_1 \cat g_1 g_0 \cat \dots}, \sed{{C|_H}^2}) =\\
	= \phi(\sed{\lcycle{C|_G}{h_0 h_1}}) \cdot \phi(\sed{\rcycle{g_0 g_1}{C|_H}}) = \phi(\sed{\lcycle{C|_G}{h_0 h_1}} \cdot \sed{\rcycle{g_0 g_1}{C|_H}}).
	\end{gather*}
	Lemma~\ref{lem:product} also says that $\phi$ is injective, and hence $\pexp{\sed{C}}{2} = \sed{\lcycle{C|_G}{h_0 h_1}} \cdot \sed{\rcycle{g_0 g_1}{C|_H}}$.
	By Lemma~\ref{lem:homo}, $\pexp{\sed{\mu(C)}}{2} = \sed{\mu(\lcycle{{C|_G}}{h_0 h_1})} \cdot \sed{\mu(\rcycle{g_0 g_1}{C|_H})}$.
\end{proof}

The crucial property of the product is that if $X$ is a closed walk of the form $\lcycle{C}{h_0 h_1}$ and $Y$ is of the form $\rcycle{g_0 g_1}{D}$, then $X$ and $Y$ commute in $\coarsePi{G\times H}$, that is $X \cdot Y = Y \cdot X$.
Therefore, by Lemma~\ref{lem:homo} $\sed{\mu(X)}$ commutes with $\sed{\mu(Y)}$, which will gives us a lot of information (in the case of square-free $K$).

\begin{corollary}\label{cor:commute}
	Let $\mu: G\times H \to K$.
	Let $g_0 g_1 \in G$ and $h_0 h_1\in H$.
	Let $C \in  \fundpi{g_0}{G}$ and $D \in \fundpi{h_0}{H}$.
	Then $\sed{\col(\lcycle{C}{h_0 h_1})}$ commutes with $\sed{\col(\rcycle{g_0 g_1}{D})}$.	
\end{corollary}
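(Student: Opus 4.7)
The plan is to first prove the commutativity at the level of $\coarsePi{G\times H}$ and then transport it to $\coarsePi{K}$ by functoriality of $\mu$. Write $X := \lcycle{C}{h_0 h_1}$ and $Y := \rcycle{g_0 g_1}{D}$ for brevity; both are closed walks in $G\times H$ based at $(g_0,h_0)$, and what I want to show is that $\sed{\mu(X)}$ and $\sed{\mu(Y)}$ commute in $\coarsePi{K}$.

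I would first establish $\sed{X}\cdot\sed{Y} = \sed{Y}\cdot\sed{X}$ in $\coarsePi{G\times H}$ by applying the injective groupoid homomorphism $\phi$ from Lemma~\ref{lem:product} to both sides. The key observation is that the ``transverse'' projections of $X$ and $Y$ trivialize: $X|_H$ is $|C|$ copies of $h_0 h_1 \cat h_1 h_0$, which is just $e \cat e^{-1}$ for $e = h_0h_1$, so iterated reduction gives $\sed{X|_H} = \sed{\eps}$; symmetrically $\sed{Y|_G} = \sed{\eps}$. Consequently $\phi(\sed{X}) = (\sed{X|_G},\sed{\eps})$ and $\phi(\sed{Y}) = (\sed{\eps},\sed{Y|_H})$, and since $\phi$ is a groupoid homomorphism, both $\phi(\sed{X}\cdot\sed{Y})$ and $\phi(\sed{Y}\cdot\sed{X})$ evaluate to the same pair $(\sed{X|_G},\sed{Y|_H})$ in $\coarsePi{G} \times \coarsePi{H}$. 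Injectivity of $\phi$ then forces $\sed{X}\cdot\sed{Y} = \sed{Y}\cdot\sed{X}$ in $\coarsePi{G\times H}$.

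Finally, Lemma~\ref{lem:homo} provides a groupoid homomorphism $\coarsePi{G\times H} \to \coarsePi{K}$ induced by $\mu$, sending $\sed{W}$ to $\sed{\mu(W)}$ and respecting $\cdot$. Applying it to both sides of the above equality yields $\sed{\mu(X)}\cdot\sed{\mu(Y)} = \sed{\mu(Y)}\cdot\sed{\mu(X)}$ in $\coarsePi{K}$, which is precisely the commutativity asserted by the corollary.

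I do not anticipate any real obstacle: Lemmas~\ref{lem:product} and~\ref{lem:homo} already encode the nontrivial content, and the only routine verification is that $X|_H$ and $Y|_G$ are $\sim$-trivial, which is immediate from their definitions and iterated back-and-forth reductions. The geometric picture to keep in mind is that $X$ and $Y$ are ``horizontal'' and ``vertical'' loops on the torus $G\times H$, which commute up to sliding squares past each other; Lemma~\ref{lem:product} is precisely what packages this torus intuition into an algebraic statement.
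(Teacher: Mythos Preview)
Your argument is correct and essentially identical to the paper's own proof: compute $\phi(\sed{X})=(\sed{X|_G},\sed{\eps})$ and $\phi(\sed{Y})=(\sed{\eps},\sed{Y|_H})$ using that the transverse projections reduce to $\eps$, observe these commute in the product, invoke injectivity of $\phi$ from Lemma~\ref{lem:product}, and push through $\mu$ via Lemma~\ref{lem:homo}. The only cosmetic difference is that the paper writes $\sed{C^2}$ where you write $\sed{X|_G}$, which are of course the same by definition of $X=\lcycle{C}{h_0h_1}$.
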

\begin{proof}
	Since $h_0h_1 \cat h_1 h_0 \sim \eps$ in $H$,
	$\phi(\sed{\lcycle{C}{h_0 h_1}})=(\sed{C^2|_G},\sed{\eps})$ and similarly
	$\phi(\sed{\rcycle{g_0 g_1}{D}})=(\sed{\eps},\sed{D^2|_H})$.
	As $\sed{\eps}$ commutes with any element of $\coarsepi{g_0}{G}$ (and similarly for $H$), $(\sed{C^2|_G},\sed{\eps})$ commutes with $(\sed{\eps},\sed{D^2|_H})$.
	By Lemma~\ref{lem:product}, $\phi$ is an injective homomorphism and hence $\sed{\lcycle{C}{h_0 h_1}}$ commutes with $\sed{\rcycle{g_0 g_1}{D}}$.
	Since $\mu$ is a graph homomorphism, by Lemma~\ref{lem:homo}, $\sed{\mu(\lcycle{C}{h_0 h_1})}$ commutes with $\sed{\mu(\rcycle{g_0 g_1}{D})}$.
\end{proof}

\begin{figure}[H]
	\centering
	\begin{tikzpicture}[scale=0.9]
	\tikzstyle{ghost}=[inner sep=0pt,minimum size=0pt]
	\tikzstyle{v}=[circle,fill=black,draw=black!75,inner sep=0pt,minimum size=0.3em]
	\tikzstyle{splitl}=[path fading=west]
	\tikzstyle{splitr}=[path fading=east]

\pgfmathtruncatemacro\n{6}
\pgfmathtruncatemacro\m{7}

\pgfmathtruncatemacro\mm{2*\m}

\foreach \i in {-1,...,\n}
{
	\foreach \j in {-1,...,\mm}
	{
		\pgfmathtruncatemacro\t{mod(mod(\i+\n,\n)+mod(\j+\mm,\mm),2)};
		\pgfmathtruncatemacro\jj{mod(\j,\m)};
		\pgfmathtruncatemacro\ii{mod(\i,\n)};		
		\ifnum \t < 1
			\ifnum \i < 0
				\node[ghost] (u\i_\j) at (\j,-\i) {};
			\else \ifnum \j < 0
				\node[ghost] (u\i_\j) at (\j,-\i) {};
			\else \ifnum \i = \n
				\node[ghost,label={above:$\ii,\jj$}] (u\i_\j) at (\j,-\i) {};
			\else \ifnum \j = \mm
				\node[ghost,label={above:$\ii,\jj$}] (u\i_\j) at (\j,-\i) {};
			\else
				\node[v,label={above:$\ii,\jj$}] (u\i_\j) at (\j,-\i) {};
			\fi \fi \fi \fi
		\fi
	}
}

\foreach \i in {-1,...,\n}
{
	\foreach \j in {-1,...,\mm}
	{
		\pgfmathtruncatemacro\t{mod(mod(\i+\n,\n)+mod(\j+\mm,\mm),2)};
		\pgfmathtruncatemacro\ip{\i+1};
		\pgfmathtruncatemacro\im{\i-1};		
		\pgfmathtruncatemacro\jp{\j+1};	
				
		\ifnum \t < 1
			\ifnum \i < 0
				\ifnum \jp < \mm
					\draw[splitl] (u\i_\j) -- (u\ip_\jp);			
				\fi
			\else \ifnum \i = \n
				{};
			\else \ifnum \j = \mm
				{};
			\else \ifnum \j < 0
				\ifnum \ip < \n
					\draw[splitl] (u\i_\j) -- (u\ip_\jp);
				\fi
			\else \ifnum \ip = \n
				\draw[splitr] (u\i_\j) -- (u\ip_\jp);
			\else \ifnum \jp = \mm
				\draw[splitr] (u\i_\j) -- (u\ip_\jp);			
			\else
				\draw (u\i_\j) -- (u\ip_\jp);
			\fi \fi \fi \fi \fi \fi
			\ifnum \i < 0
				{};
			\else \ifnum \i = 0
				\ifnum \j < \mm
					\draw[splitr] (u\i_\j) -- (u\im_\jp);
				\fi
			\else \ifnum \j = \mm
				{};
			\else \ifnum \j < 0
				\draw[splitl] (u\i_\j) -- (u\im_\jp);			
			\else \ifnum \jp = \mm
				\draw[splitr] (u\i_\j) -- (u\im_\jp);
			\else \ifnum \i = \n
				\draw[splitl] (u\i_\j) -- (u\im_\jp);				
			\else
				\draw (u\i_\j) -- (u\im_\jp);
			\fi \fi \fi \fi \fi \fi

			\ifnum \j = 0 \ifnum \i < \n
			\draw[blue,thick,double] (u\i_\j) -- (u\ip_\jp);
			\fi \fi						
			\ifnum \j = 0 \ifnum \i < \n \ifnum \i>0
			\draw[blue,thick,double] (u\i_\j) -- (u\im_\jp);
			\fi \fi \fi
			\ifnum \j = 0 \ifnum \i = 0
			\draw[blue,double,splitr] (u\i_\j) -- (u\im_\jp);
			\fi \fi 
			\ifnum \j = 0 \ifnum \i = \n
			\draw[blue,double,splitl] (u\i_\j) -- (u\im_\jp);
			\fi \fi	
			
			\ifnum \i = 0 \ifnum \j < \mm 
				\draw[red,very thick] (u\i_\j) -- (u\ip_\jp);
			\fi \fi			
			\ifnum \i = 1 \ifnum \jp < \mm \ifnum \j>-1
				\draw[red,very thick] (u\i_\j) -- (u\im_\jp);
			\fi \fi \fi
			\ifnum \i = 1 \ifnum \j = -1
			\draw[red,thick,splitl] (u\i_\j) -- (u\im_\jp);
			\fi \fi 
			\ifnum \i = 1 \ifnum \jp = \mm
				\draw[red,thick,splitr] (u\i_\j) -- (u\im_\jp);
			\fi \fi

		\fi
	}
}
\end{tikzpicture}
	\caption{The graph $C_6 \times C_7$. For $h\in V(H)$, the vertices $(g,h)$ are drawn in two columns depending on the parity of $g+h$ to make the structure of squares more apparent.
	The red line shows $\rcycle{0\,1}{C_7}$.
	The blue line shows $\lcycle{C_6}{0\,1}$ (visiting each edge twice, since $C_6$ is even).} 
	\label{fig:torus}
\end{figure}
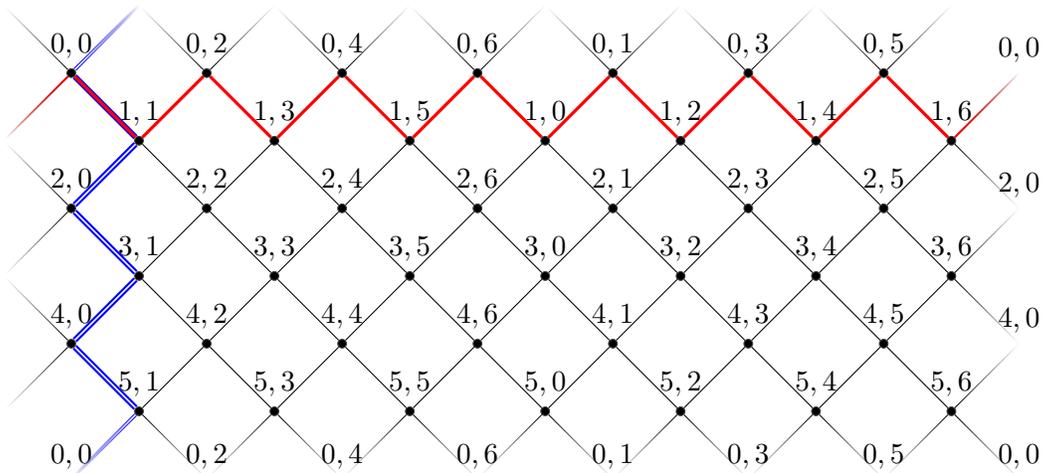
\pagebreak

We now consider the information given by $\coarsePi{K}$ more precisely.
For the graphs $K$ we consider, $\coarsepi{v}{K}$ is a free group, for any $v \in V(K)$.
For square-free graphs this follows from the fact that $\fundpi{v}{K}$ is always a free group (see e.g.~\cite{kwak2007graphs}).
For circular cliques this follows from the fact that $\coarsepi{v}{K_{p/q}}$ is isomorphic to $\ZZ$, for $2<\frac{p}{q}<4$ (Lemma~\ref{lem:cycleGroup}).
The property of free groups we need (and which can easily be checked directly for $\fundpi{v}{K}$ and $\ZZ$) is that primitive roots can be unambiguously defined and that primitive roots of commuting elements are equal, up to inversion.
For \marginpar{\footnotesize{primitive\\root}}
an element $O$ of a free group $\grouppi$ other than the trivial element $\eps$, its \emph{primitive root} is the unique $R\in \grouppi$ such that $O=R^n$ for some $n\in\mathbb{N}$ with $n$ maximized (see e.g.~\cite{madlener1980string} for a linear time algorithm computing $R$).

\begin{fact}\label{fact:commuteRoot}
	Let $O_1,O_2$ be elements of a free group.
	Then $O_1$ and $O_2$ commute, i.e. $O_1\cdot O_2=O_2\cdot O_1$, if and only if
	$O_1=\varepsilon$ or $O_2=\varepsilon$ or their primitive roots are equal or the inverse of each other. 
\end{fact}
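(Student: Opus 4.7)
The plan is to prove the two directions separately, with the backward direction being essentially by inspection and the forward direction reduced to the classical fact that a subgroup of a free group generated by two commuting elements is cyclic.

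For the easy direction, suppose $O_1 = \varepsilon$ or $O_2 = \varepsilon$ (both commute with everything), or suppose $O_1$ and $O_2$ have primitive roots $R$ and $R' \in \{R, R^{-1}\}$. Then writing $O_1 = R^a$ and $O_2 = R^{\pm b}$ for $a,b \in \mathbb{N}$, the product $O_1 \cdot O_2$ equals $O_2 \cdot O_1$ by associativity applied to powers of a single element. This uses only the groupoid axioms already recorded.

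For the forward direction, assume $O_1, O_2 \neq \varepsilon$ and $O_1 \cdot O_2 = O_2 \cdot O_1$. The first step is to consider the subgroup $H \leq \grouppi$ generated by $O_1$ and $O_2$. By the Nielsen--Schreier theorem, $H$ is itself a free group. Since $O_1$ and $O_2$ commute and generate $H$, the group $H$ is abelian. But the only abelian free groups are the trivial group and $\mathbb{Z}$, so $H \cong \mathbb{Z}$, generated by some element $R$ of $\grouppi$. Hence $O_1 = R^a$ and $O_2 = R^b$ for some nonzero integers $a, b$. Replacing $R$ by its primitive root in $\grouppi$ if necessary (which does not change $H$), we may assume $R$ is itself primitive. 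The primitive root of $O_1$ is then $R$ if $a > 0$ and $R^{-1}$ if $a < 0$; similarly for $O_2$. In all four sign combinations, the primitive roots of $O_1$ and $O_2$ coincide or are inverses of each other.

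The only real obstacle is appealing to Nielsen--Schreier (that subgroups of free groups are free), which is the standard tool for this kind of statement. Since the paper lists this fact as a ``Fact'' and refers to algorithms for computing primitive roots, the cleanest presentation is to cite Nielsen--Schreier and derive the conclusion in two lines as above. An alternative, more elementary route would be to pass to cyclically reduced conjugates: write $O_i = W_i \cdot C_i \cdot W_i^{-1}$ with $C_i$ cyclically reduced, use commutativity together with uniqueness of normal forms to force $W_1 = W_2$ and $C_1, C_2$ to be powers of a common cyclic word, then unwind; but this is messier and offers no additional insight for the rest of the paper, so the subgroup argument is preferable.
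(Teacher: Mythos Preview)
The paper does not prove this statement at all; it is recorded as a ``Fact'' and treated as a standard property of free groups (the surrounding text even notes it ``can easily be checked directly for $\fundpi{v}{K}$ and $\ZZ$''). Your proof via Nielsen--Schreier is correct and is exactly the standard argument one would give.

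One small slip: the parenthetical ``(which does not change $H$)'' is not quite right. If $R$ generates $H$ but is not itself primitive, then its primitive root $S$ lies outside $H$ (since $H=\langle R\rangle$ and free groups are torsion-free), so passing from $R$ to $S$ strictly enlarges the cyclic subgroup. This is harmless for your argument, though: you only need that $O_1$ and $O_2$ are both powers of the primitive element $S$, which follows immediately from $O_i = R^{a_i} = S^{k a_i}$. The conclusion about primitive roots being $S$ or $S^{-1}$ then holds as you wrote.
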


Turning our attention to square-free graphs $K$, from Corollary~\ref{cor:commute} (and Lemma~\ref{lem:squarefreeIso}) we have that $\red{\col(\lcycle{C}{h_0 h_1})}$ commutes with $\red{\col(\rcycle{g_0 g_1}{D})}$ for any cycles $C,D$ in $G,H$, and therefore they have the same primitive root (up to inversion), if they are both non-$\eps$ elements of $\fundpi{\mu(g_0,h_0)}{K}$.
Intuitively, this means that if we take any cycle in $G$ and any cycle in $H$, then $\mu$ maps them to closed walks that wind around the same cycles (or the same sequence of cycles) in $K$, though they may wind a different number of times and in opposite directions.

Since this is true for any pair of cycles, this implies that either all cycles in $G$ map to $\eps$, or all cycles in $H$ map to $\eps$, or all cycles in $G$ and $H$ map to closed walks winding around one common cycle of $K$.
We make this more formal in the following proof.
The theorem captures all we need from this section for the case of square-free $K$.
In the first and second case we will later be able to directly obtain a graph homomorphism from $G$ and $H$, respectively, while in the third case, we will reduce our problem by obtaining a homomorphism $G\times H \to C_n$, where $C_n \to K$ corresponds to the common primitive root.

\begin{theorem}\label{thm:commute}
	Let $\mu: G\times H \to K$ for a square-free graph $K$.
	Let $g_0 g_1 \in G, h_0 h_1 \in H$.
	Then one of the following holds:
	\begin{itemize}
		\item $\red{\mu(C)}=\eps$ for every closed walk $C$ from $(g_0,h_0)$ in $G\times h_0 h_1$,
		\item $\red{\mu(D)}=\eps$ for every closed walk $D$ from $(g_0,h_0)$ in $g_0 g_1 \times H$,
		\item there is an $R\in \fundPi{K}$ such that for every closed walk $C'$ from $(g_0,h_0)$ in $G\times H$, $\red{\mu(C')}=\pexp{R}{i}$ for some $i\in\ZZ$.
	\end{itemize}
\end{theorem}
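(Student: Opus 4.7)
My plan is to work in the free group $\fundpi{\mu(g_0,h_0)}{K}$, which equals $\coarsepi{\mu(g_0,h_0)}{K}$ by Lemma~\ref{lem:squarefreeIso}, and to extract the theorem's trichotomy from Fact~\ref{fact:commuteRoot} applied to the commuting pairs furnished by Corollary~\ref{cor:commute}. Write $L_C := \red{\mu(\lcycle{C}{h_0 h_1})}$ for each closed walk $C$ at $g_0$ in $G$, and $M_D := \red{\mu(\rcycle{g_0 g_1}{D})}$ for each closed walk $D$ at $h_0$ in $H$, so that each $L_C$ commutes with each $M_D$. The natural three-way split is: (A) every $L_C = \eps$; (B) every $M_D = \eps$; or (C) some $L_{C_0}$ and some $M_{D_0}$ are both non-trivial, in which case I take $R$ to be the primitive root of $L_{C_0}$ and Fact~\ref{fact:commuteRoot} places $M_{D_0}$ in $\langle R\rangle$ as well.

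In cases (A) and (B), the bridge to the first two conclusions is a single observation: for any closed walk $\tilde C$ at $(g_0,h_0)$ in $G \times h_0 h_1$, the $H$-projection of $\tilde C$ is forced to alternate between $h_0$ and $h_1$, so $\tilde C \cat \tilde C$ and $\lcycle{\tilde C|_G}{h_0 h_1}$ have identical projections to $G$ and to $H$, hence coincide as walks in $G \times H$. Thus $\red{\mu(\tilde C)}^2 = L_{\tilde C|_G}$, which in case (A) equals $\eps$. Since $\eps$ has no non-trivial square root in a free group, $\red{\mu(\tilde C)} = \eps$, giving the first conclusion; case (B) is symmetric.

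For case (C), I want every closed walk $C'$ at $(g_0, h_0)$ in $G \times H$ to satisfy $\red{\mu(C')} \in \langle R \rangle$. Corollary~\ref{cor:factor} rewrites $\red{\mu(C')}^2$ as $L_{C'|_G} \cdot M_{C'|_H}$. Now $L_{C'|_G}$ commutes with the non-trivial $M_{D_0}$, so by Fact~\ref{fact:commuteRoot} it lies in $\langle R \rangle$; the same argument via $L_{C_0}$ puts $M_{C'|_H}$ in $\langle R \rangle$. Hence $\red{\mu(C')}^2$ equals some power $R^n$. The step I expect to need the most care is the final square-root extraction in a free group: if $X^2 = R^n$ with $R$ primitive, then writing $X = S^k$ with $S$ primitive, the unique-factorization property of free groups forces $S = R^{\pm 1}$ and $n = \pm 2k$, so $X \in \langle R \rangle$. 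Applying this to $\red{\mu(C')}$ concludes case (C) and the theorem.
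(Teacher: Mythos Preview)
Your proof is correct and follows essentially the same approach as the paper's own proof: the same three-way split, the same observation that $\tilde C\cat\tilde C=\lcycle{\tilde C|_G}{h_0h_1}$ for cases (A)/(B), and the same use of Corollary~\ref{cor:commute} with Fact~\ref{fact:commuteRoot} to funnel everything into $\langle R\rangle$ in case (C), followed by the square-root extraction. The only cosmetic difference is that the paper first shows every $M_D\in\langle R\rangle$ (via $C_0$) and every $L_C\in\langle R\rangle$ (via $D_0$) before invoking Corollary~\ref{cor:factor}, whereas you invoke Corollary~\ref{cor:factor} first and then place the two factors in $\langle R\rangle$; the logic is identical.
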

\begin{proof}
	Suppose first that $\red{\mu(\lcycle{C}{h_0 h_1})} = \eps$ for all $C\in \fundpi{g_0}{G}$.
	Let $C'$ be any closed walk from $(g_0,h_0)$ in $G\times h_0 h_1$.
	Then $C'\cat C' = \lcycle{C'|_G}{h_0 h_1}$, and hence $\pexp{\red{\mu(C')}}{2}=\red{\mu(\lcycle{\red{C'|_G}}{h_0 h_1})} = \eps$, implying $\red{\mu(C')}=\eps$. So the first case of our claim holds.
	Symmetrically, if  $\red{\mu(\rcycle{g_0 g_1}{D})} = \eps$ for all $D\in \fundpi{h_0}{H}$, then the second case of our claim holds.
	
	If neither of the above two possibilities holds, then there is a $C_0\in\fundpi{g_0}{G}$ with $\red{\mu(\lcycle{C_0}{h_0 h_1})}\neq \eps$ and a $D_0\in\fundpi{h_0}{H}$ with $\red{\mu(\rcycle{g_0 g_1}{D_0})}\neq \eps$.
	Let $R$ be the primitive root of $\red{\mu(\lcycle{C_0}{h_0 h_1})}$, $R\neq\eps$.
	
	Let $D$ be any element of $\fundpi{h_0}{H}$.
	By Corollary~\ref{cor:commute} and Lemma~\ref{lem:squarefreeIso}, $\red{\mu(\lcycle{C_0}{h_0 h_1})}$ commutes with $\red{\mu(\rcycle{g_0 g_1}{D})}$.
	By Fact~\ref{fact:commuteRoot}, this implies that the primitive root of $\red{\mu(\rcycle{g_0 g_1}{D_0})}$ is $R$, up to inversion.
	That is, for every $D\in \fundpi{h_0}{H}$, $\red{\mu(\rcycle{g_0 g_1}{D})}=\pexp{R}{i}$ for some $i\in\ZZ$. 
	Using $D_0$, we can symmetrically show that for every $C\in \fundpi{g_0}{G}$, $\red{\mu(\lcycle{C}{h_0 h_1})}=\pexp{R}{i}$ for some $i\in\ZZ$.
	
	For any $C'\in\fundpi{(g_0,h_0)}{G\times H}$, by
	Corollary~\ref{cor:factor} (and Lemma~\ref{lem:squarefreeIso}), $\pexp{\red{\mu(C')}}{2}=\red{\mu(\lcycle{C'|_G}{h_0 h_1})} \cdot \red{\mu(\rcycle{g_0 g_1}{C'|_H})}$
	and hence $\pexp{\red{\mu(C')}}{2}=\pexp{R}{i}$ for some $i\in\ZZ$.
	So either $\red{\mu(C')}$ is empty or it has the same primitive root as  $\pexp{\red{\mu(C')}}{2}$, and in both cases $\red{\mu(C')}=\pexp{R}{i}$ for some $i\in\ZZ$.
\end{proof}

\section{The case when \texorpdfstring{$K$}{K} is circular}\label{sec:circular}
We begin this section by showing that circular cliques with $2<\frac{p}{q}<4$ behave like circles, topologically, and so the (coarse) fundamental group just describes an integer: the winding number.
(An analogous fact is well known for the box complex: formally, it is homotopy equivalent to a circle and its fundamental group is isomorphic to $\mathbb{Z}$ for circular cliques with $2<\frac{p}{q}<4$).
For simplicity, we only consider odd $p$; this includes the case of odd cycles in particular (as $C_{2n+1}$ is isomorphic to $K_{2n+1/n}$), and will still allow us to conclude the general case.

\begin{lemma}\label{lem:cycleGroup}
	Let $p,q$ be integers such that $2<\frac{p}{q}<4$ and $p$ is odd.
	Then $\coarsepi{v}{K_{p/q}}$ is a group isomorphic to $\ZZ$, for any $v\in V(K_{p/q})$.
\end{lemma}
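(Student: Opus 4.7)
The plan is to define an explicit group homomorphism $\omega : \coarsePi{K_{p/q}}_v \to \ZZ$ (I mean $\coarsepi{v}{K_{p/q}} \to \ZZ$) and show it is an isomorphism. For each oriented edge $e = uv$ of $K_{p/q}$, let $d(e)$ be the unique integer with $d(e) \equiv v - u \pmod{p}$ and $|d(e)| \leq (p-1)/2$. The edge condition together with $2 < p/q < 4$ and $p$ odd forces $p/4 < |d(e)| \leq (p-1)/2 < p/2$. For a closed walk $W = e_1 \cat \cdots \cat e_\ell$ based at $v$, I would set
\[
\omega(W) \;=\; \sum_{i=1}^\ell \mathrm{sign}(d(e_i)) \;-\; \frac{2}{p}\sum_{i=1}^\ell d(e_i).
\]
Since $W$ closes, $\sum d(e_i)$ is a multiple of $p$, so $\omega(W) \in \ZZ$; clearly $\omega$ is additive under concatenation and invariant under insertions and deletions of $e \cat e^{-1}$.

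The first substantive step is to verify that $\omega$ descends to $\coarsepi{v}{K_{p/q}}$, i.e.\ that the $\omega$-value of the 4-cycle of any non-trivial square is $0$. Because $p/4 < |d(e)| < p/2$ for every edge, the sum $d_{12}+d_{23}+d_{34}+d_{41}$ has absolute value strictly less than $2p$ and is divisible by $p$; hence the square's winding $w$ lies in $\{-1, 0, 1\}$. A short case analysis, exploiting $|d| > p/4$ to rule out the impossible sign patterns (for instance, $w = 0$ cannot occur with three same-sign edges, as their contribution alone exceeds $3p/4$ in absolute value and cannot be cancelled by a single edge of size less than $p/2$), shows that $n_+ - n_- = 2w$ always holds, whence $\omega = (n_+ - n_-) - 2w = 0$.

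Surjectivity is then routine. When $p/q \geq 3$, the triangle $0 \to q \to 2q \to 0$ has three positive displacements (the third being $p - 2q$, since $p/q < 4$ forces $2q > (p-1)/2$), winding $1$, so $\omega = 3 - 2 = 1$. When $2 < p/q < 3$ (so $K_{p/q}$ is triangle-free), one constructs a short odd cycle of $\omega$-value $1$ by combining a number of $+q$-steps with a single ``long'' step, using $\gcd(p,q) = 1$ to realize the desired value.

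The main obstacle is injectivity: showing that $\omega(W) = 0$ implies $W \sim \eps$. The plan is induction on $|W|$, repeatedly applying square-replacements to strictly shorten $W$ (after subsequent reductions) while preserving its $\omega$-value. The key combinatorial lemma is that for any three consecutive vertices $u, v, w$ along a walk with $u \neq w$, there exists a fourth vertex $v' \neq v$ adjacent to both $u$ and $w$, so that $u, v, w, v'$ forms a non-trivial square allowing replacement of $v$ by $v'$; the uniform bound $|d(e)| > p/4$ is precisely what guarantees such a $v'$ always exists (one may take $v'$ near the antipode of $v$ in $\ZZ_p$). Organising this iteration so that length strictly decreases at each stage, without introducing new backtracking that would offset the gain, is the hardest aspect of the proof.
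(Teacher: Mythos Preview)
Your invariant $\omega$ is correct and its well-definedness on $\coarsepi{v}{K_{p/q}}$ is verified properly: the square case analysis using $p/4<|d(e)|<p/2$ is clean. This is a valid alternative to the paper's invariant, which instead passes through the isomorphism $K_{p/q}\times K_2\cong K'$ (a Cayley graph of $\ZZ_{2p}$ with small generators) and sums signed step-lengths there. Also note that surjectivity is a non-issue: any odd closed walk has odd $\omega$, so the image is a nonzero subgroup of $\ZZ$, hence isomorphic to $\ZZ$ once injectivity is known; you do not need $\gcd(p,q)=1$, which the lemma does not assume.

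The genuine gap is injectivity. Your ``key combinatorial lemma''---that any two vertices $u\neq w$ at distance~$2$ have at least two common neighbours---is false. In $K_{5/2}=C_5$ the vertices $0$ and $4$ have $\{2\}$ as their only common neighbour; indeed every odd cycle $C_{2n+1}=K_{(2n+1)/n}$ is square-free, so there are no non-trivial squares at all and your replacement step is vacuous. Even outside the square-free range the claim fails: in $K_{7/2}$ one has $N(0)\cap N(3)=\{5\}$, so the walk $0,5,3$ admits no square replacement. The bound $|d(e)|>p/4$ gives only $|N(u)\cap N(w)|\geq p-4q+2$, which can be $\leq 1$ throughout $2<p/q\leq 7/2$. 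More structurally, your shortening scheme never invokes the hypothesis $\omega(W)=0$; without it you would be proving that every closed walk is $\sim\eps$, which is false.

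What the paper does instead for injectivity is a normalisation argument: by explicit elementary steps (which may first \emph{lengthen} the walk) it forces every edge-difference to lie in $\{\lfloor p/2\rfloor,\lceil p/2\rceil\}$; two consecutive edges with opposite such differences then satisfy $w_{i+2}=w_i$ and reduce. What remains is a walk with constant difference $\pm\lfloor p/2\rfloor$, which is necessarily $O^{\pm|W|/p}$ for a fixed generator $O$ of length $p$. The invariant then distinguishes the powers. You could graft this normal-form step onto your $\omega$; but the ``shorten directly via squares'' plan, as stated, does not go through.
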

\begin{proof}
	We begin by defining a more intuitive view of a circular clique (so that edges will join numbers that are close enough, instead of far enough).
	We need the following definitions:
	\begin{itemize}
		\item For $i,j\in \ZZ_{2p}$, define $\od(i-j)$ to be the integer (in $\ZZ$) in the set $\{-(p-1),-(p-2),\dots,p-1,p\}$ which is equivalent to $i-j \mod 2p$
		(this depends only on $i-j$, but we think of it as a signed distance between $i$ and $j$ in the circle $\ZZ_{2p}$).
		\item Let $K'$ be the graph with $V(K')=\{0,1,\dots,2p-1\}$, $E(K')=\{ij \mid \od(i-j)\mbox{ is odd and }|\od(i-j)|\leq p-2q\}$ (that is, $K'$ is the Cayley graph of $\ZZ_{2p}$ with generators $\pm1,\pm3,\dots,\pm p-2q$).
		\item Define $\phi: K_{p/q} \times K_2 \to K'$ as $\phi(i,0)=2i \mod 2p$ and $\phi(i,1) = 2i + p \mod 2p$. This is easily seen to be a graph isomorphism.
		Indeed, vertices whose difference is 
		$(i-j)\in \{q,q+1,\dots,\lfloor\frac{p}{2}\rfloor,\lceil\frac{p}{2}\rceil,\dots,p-q\} \mod p$, will map to vertices whose difference is\\
		$2(i-j)+p \in \{2q+p,2q+2+p,\dots, 2\lfloor\frac{p}{2}\rfloor+p, 2\lceil\frac{p}{2}\rceil+p,\dots, 2(p-q)+p\} =$\\
		\hspace*{5em}$=\{-(p-2q),-(p-2q-2),\dots,-1,1,\dots,p-2q\} \mod 2p$.
		\item For a walk $W$ in $K_{p/q}$ whose $(i+1)$-th edge is $w_i w_{i+1}$, define $\varphi(W)$ to be the walk in $K'$ of the same length whose $(i+1)$-th edge is $\phi(w_i, i\mod 2) \phi(w_i, i+1\mod 2)$.
		Note that $\varphi$ maps closed walks of odd length beginning and ending in $w_0\in V(K_{p/q})$ to walks between $\phi(w_0,0)$ and $\phi(w_0,1)=\phi(w_0,0)+p \mod 2p$, which are not closed.
		\item For a walk $W'$ in $K'$ whose $(i+1)$-th edge is $w_i w_{i+1}$, define $\pd(W)= \sum_{i=0}^{|W|-1} \od(w_{i+1}-w_i)$.
		Intuitively, $\pd$ measures how far $W$ went winding around $K'$, and we should think of a closed walk $W$ as winding $d$ times if $\pd(W)=d \cdot 2p$.
	\end{itemize}		
	
	We claim that $\pd$ defines a functor from $\coarsePi{K'}$ to the group $\ZZ$,
	that is, $\pd(W \cat W') = \pd(W) + \pd(W')$ and $W\sim W'$ implies $\pd(W)=\pd(W')$ for any two walks $W,W'$ in $K'$.
	The first is clear, so consider two walks $W\sim W'$ in $K'$.
	It suffices to show that $\pd(W)=\pd(W')$ when $W$ and $W'$ differ by an elementary step.
	If $W'$ is obtained from $W$ by introducing/deleting a subsequence $w_1 w_2 \cat w_2 w_1$ for some $w_1w_2 \in E(K')$, then $\pd(W')$ is obtained from $\pd(W)$ by adding/subtracting $\pd(w_1 w_2 \cat w_2 w_1) = \od(w_2 - w_1) + \od(w_1 - w_2) = 0$, so the two values are indeed equal (note that $\od(w_2 - w_1) = -\od(w_1 - w_2)$ unless both are equal to $p$, which is impossible when $w_1w_2 \in E(K')$).
	If $W'$ is obtained from $W$ by replacing a subsequence $ab \cat bc$ by $ad \cat dc$, for some square $a,b,c,d$ in $K'$, then $\pd(W')$ differs from $\pd(W)$ by $(\od(b-a)+\od(c-b))-(\od(d-a)+\od(c-d)) = \od(b-a) + \od(a-d) + \od(d-c) + \od(c-b)$.
	Since $ab$ is an edge of $K'$, we have $|\od(b-a)|\leq p-2q$ and similarly $|\od(a-d)|\leq p-2q$.
	Hence $|\od(b-a)|+|\od(a-d)| \leq 2p-4q < p$, so $a,b,d$ are contained in an interval of length less than $p$ in $\ZZ_{2p}$, implying $\od(b-a)+\od(a-d)=\od(b-d)$ and $|\od(b-d)|<p$.
	Similarly $\od(d-c) + \od(c-b) = \od(d-b)$.
	Therefore the difference between $\pd(W')$ and $\pd(W)$ is $\od(b-d)+\od(d-b)=0$, so they are in fact equal.
	
	Furthermore, $\pd \circ \varphi$ is a functor from $\coarsePi{K_{p/q}}$ to the group $\ZZ$, that is, $\pd(\varphi(W\cat W'))=\pd(\varphi(W))+\pd(\varphi(W'))$ and $W\sim W'$ implies $\pd(\varphi(W))=\pd(\varphi(W'))$.
	The first follows from the definitions and the fact that $\od((p+i)-(p+j)) = \od(i-j)$ for $i,j \in \ZZ_{2p}$.
	The second follows from the fact that an elementary step showing $W \sim W'$ in $K_{p/q}$ corresponds to an elementary step showing $\phi(W) \sim \phi(W')$ in $K'$; in particular, if $a,b,c,d$ is a square in $K_{p/q}$, then $\phi(a,i),\phi(b,1-i),\phi(c,i),\phi(d,1-i)$ is a square in $K'$ for $i=0,1$.
	
	Let us define a generator for $\coarsepi{0}{K_{p/q}}$.
	Define $O$ as the closed walk of length $p$ in $K_{p/q}$ whose $(i+1)$-th edge is $(i \cdot \lceil\frac{p}{2}\rceil \mod p, (i+1)\cdot \lceil\frac{p}{2}\rceil \mod p)$.
	Then $\varphi(O)$ is a walk of length $p$ in $K'$ going from $0$ to $p$, whose $(i+1)$-th edge is $(i \mod 2p, i+1 \mod 2p)$; indeed, for even $i$, it is by definition $(2\cdot i \lceil\frac{p}{2}\rceil \mod 2p, 2\cdot (i+1) \lceil\frac{p}{2}\rceil +p \mod 2p)=(i\cdot(p+1) \mod 2p, (i+1)\cdot(p+1)+p \mod 2p)=(i \mod 2p, i+1 \mod 2p)$, and similarly for odd $i$.
	Thus $\pd(\varphi(O)) = \sum_{i=0}^{|O|} 1 = |O| = p$.
	
	We claim that for every closed walk $W$ in $K_{p/q}$ from $0$ to $0$, there is a $d\in\ZZ$ such that $W\sim O^d$.
	First, we use elementary steps to transform $W$ so that $w_{i+1} - w_{i} \in \{\lfloor\frac{p}{2}\rfloor, \lceil\frac{p}{2}\rceil\}$ (as elements in $\ZZ_p$) for any edge $w_i w_{i+1}$ of $W$.
	Indeed, if say $w_{i+1} - w_{i} \in  \{q,q+1,\dots,\lfloor\frac{p}{2}\rfloor-1\}$, then letting $x=w_i + \lfloor\frac{p}{2}\rfloor$ and $y=w_{i}-1$,
	we see that $w_i -x =  \lfloor\frac{p}{2}\rfloor$, $x-y=\lceil\frac{p}{2}\rceil$, and $w_{i+1}-y = w_{i+1} -w_{i} +1 \mod p $.
	In particular, $w_i, x,y,w_{i+1}$ is a square in $K_{p/q}$, so $w_i w_{i+1} \sim w_i w_{i+1} \cat w_{i+1} y \cat y w_{i+1} \sim w_i x \cat x y \cat y w_{i+1}$.
	Hence we can replace the subwalk $w_i w_{i+1}$ in $W$ by $w_i x \cat x y \cat y w_{i+1}$.
	Since this introduced two edges with difference between endpoints in $\{\lfloor\frac{p}{2}\rfloor, \lceil\frac{p}{2}\rceil\}$ and changed this difference for the third edge to be closer to $\lfloor \frac{p}{2}\rfloor$, we can do such replacements until we get a walk $W' \sim W$ with $w'_{i+1} - w'_{i} \in \{\lfloor\frac{p}{2}\rfloor, \lceil\frac{p}{2}\rceil\}$ for any edge $w'_i w'_{i+1}$ of $W'$.
	Then, if two consecutive edges have a different difference, say, $w'_{i+1}-w'_{i} = \lfloor\frac{p}{2}\rfloor$ and $w'_{i+2}-w'_{i+1}=\lceil\frac{p}{2}\rceil$, then in fact $w'_{i+2} = w'_{i} + \lfloor\frac{p}{2}\rfloor + \lceil\frac{p}{2}\rceil = w'_i$, so they reduce, that is, the subwalk $w'_i w'_{i+1} \cat w'_{i+1} w'_{i+2}$ can be deleted in an elementary step.
	We do this until we get a walk $W''\sim W$ such that $w''_{i+1}-w''_{i} = c$ for all edges $w''_{i+1} w''_{i}$ of $W''$, for some constant $c\in \{\lfloor\frac{p}{2}\rfloor,\lceil\frac{p}{2}\rceil\}=\{\lfloor\frac{p}{2}\rfloor,-\lfloor\frac{p}{2}\rfloor\}$.
	Then the $i$-th vertex of the walk is $w''_i = i \cdot c$.
	Since $W''$ is a closed walk, it must be that $|W''| \cdot c = 0 \mod p$.
	Therefore, $p$ must divide $|W''|$ and $W'' = O^{|W|/p}$ or $W'' = O^{-|W|/p}$.
	
	The above paragraph shows that every element of $\coarsepi{0}{K_{p/q}}$ is of the form $\sed{O^d}$ for some $d\in\ZZ$.
	Since $\Delta(\varphi(O^d)) = d \cdot p$, these are pairwise different elements, for different $d$.
	Clearly $\sed{O^d} \cdot \sed{O^{d'}} = \sed{O^{d+d'}}$, hence $\coarsepi{0}{K_{p/q}}$ is a group isomorphic to $\ZZ$.
	For any $v\in V(K_{p/q})$, let $P$ be any walk from $0$ to $v$ in $K_{p/q}$.
	Then $\sed{O} \mapsto \sed{P} \cdot \sed{O} \cdot \sed{P}^{-1}$ is easily checked to be a group isomorphism between $\coarsepi{v}{K_{p/q}}$ and $\coarsepi{0}{K_{p/q}}$.
\end{proof}

Next, with give a very short proof of a parity argument used in~\cite{El-ZaharS85,HaggkvistHMN88,DelhommeS02}.
For \marginpar{\footnotesize{half-parity}}
a cycle $C$ in $G$ or $H$, if $\sed{\mu(\lcycle{C}{h_0 h_1})}=\pexp{X}{2}$ for some $X\in\coarsePi{K}$, define the \emph{half-parity} of $C$ as the parity of $|X|$.
The following lemma shows that the half-parity of each odd-length cycle in $G$ is defined and different from the half-parity of each odd-length cycle in $H$.

\begin{lemma}\label{lem:paritySwitch}
	Let $\mu:G\times H\to K$, $g_0g_1\in G$, $h_0h_1\in H$, and assume $\coarsepi{\mu(g_0,h_0)}{K}$ is a free~group.
	Let $C$ be a closed walk from $g_0$ in $G$ and
	let $D$ be a closed walk from $h_0$ in $H$, with $|C|$,$|D|$~odd.
	Then $\sed{\mu(\lcycle{C}{h_0 h_1})} = \pexp{R}{2i}$ and
	$\sed{\mu(\rcycle{g_0 g_1}{D})} = \pexp{R}{2j}$ for some $R\in \coarsepi{\mu(g_0,h_0)}{K}$ of odd length and $i,j \in \ZZ$ such that $i+j$ is odd.
\end{lemma}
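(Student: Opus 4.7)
The plan is to apply Corollary~\ref{cor:factor} to a single carefully chosen closed walk $C'$ in $G\times H$. Let $C'$ be the closed walk from $(g_0,h_0)$ whose projection to $G$ is the $|D|$-fold concatenation of $C$ with itself, and whose projection to $H$ is the $|C|$-fold concatenation of $D$ with itself; both projections have common length $|C|\cdot|D|$, so $C'$ is a well-defined closed walk of odd length in $G\times H$.

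Write $A:=\sed{\mu(\lcycle{C}{h_0 h_1})}$, $B:=\sed{\mu(\rcycle{g_0 g_1}{D})}$, and $X:=\sed{\mu(C')}$. A direct inspection of projections shows that the $|D|$-fold concatenation of $\lcycle{C}{h_0 h_1}$ with itself coincides edge-for-edge with $\lcycle{C'|_G}{h_0 h_1}$, and symmetrically for $\rcycle{\cdot}{\cdot}$; by Lemma~\ref{lem:homo} this gives $\sed{\mu(\lcycle{C'|_G}{h_0 h_1})}=\pexp{A}{|D|}$ and $\sed{\mu(\rcycle{g_0 g_1}{C'|_H})}=\pexp{B}{|C|}$. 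Corollary~\ref{cor:factor} applied to $C'$ then yields
\[\pexp{X}{2}=\pexp{A}{|D|}\cdot\pexp{B}{|C|}.\]
Since $|C'|$ is odd, $X$ has odd length parity and in particular $X\neq\eps$.

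By Corollary~\ref{cor:commute}, $A$ and $B$ commute in the free group $\coarsepi{\mu(g_0,h_0)}{K}$, hence both commute with $\pexp{X}{2}$. The one non-routine point is to upgrade commutation with $\pexp{X}{2}$ to commutation with $X$ itself: this uses that in a free group commuting elements share a primitive root (Fact~\ref{fact:commuteRoot}) and that the primitive root of $\pexp{X}{2}$ agrees with that of $X$ (which requires $X\neq\eps$, hence the reason for designing $C'$ to have odd length). Letting $R$ denote the primitive root of $X$, we may therefore write $X=\pexp{R}{k}$, $A=\pexp{R}{a}$, $B=\pexp{R}{b}$ for some $k\neq 0$ and $a,b\in\ZZ$.

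The remainder is routine parity bookkeeping. Odd length parity of $X$ forces both $k$ and $|R|$ odd; even length parity of $A$ (namely $2|C|$) together with $|R|$ odd forces $a$ even, $a=2i$, and symmetrically $b=2j$. Substituting into $\pexp{R}{2k}=\pexp{X}{2}=\pexp{R}{2i|D|+2j|C|}$ gives $k=i|D|+j|C|$, and reducing mod $2$ (using that $|C|,|D|,k$ are all odd) yields $i+j\equiv k\equiv 1\pmod 2$, as required. The boundary case $A=\eps$ (respectively $B=\eps$) fits uniformly by taking $a=0$ (respectively $b=0$); both being $\eps$ is ruled out because free groups are torsion-free and $X\neq\eps$.
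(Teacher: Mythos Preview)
Your proof is correct and follows essentially the same approach as the paper: your walk $C'$ is the paper's $J$, Corollary~\ref{cor:factor} is applied identically, $R$ is taken as the primitive root of $\sed{\mu(C')}$, and the parity bookkeeping (odd $k$, odd $|R|$, even exponents $2i,2j$, then $i+j\equiv k\pmod 2$) is the same. Your discussion of ``upgrading commutation with $\pexp{X}{2}$ to commutation with $X$'' is slightly more explicit than the paper's one-line appeal to Fact~\ref{fact:commuteRoot}, but the underlying step is identical.
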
	
\begin{proof}
	Let $J$ be the closed walk from $(g_0,h_0)$ in $G\times H$ whose projection to $G$ is $C^{|D|}$ and whose projection to $H$ is $D^{|C|}$.
	Then $J$ has length $|C|\cdot |D|$, which is odd, in particular $\sed{\mu(J)} \neq \eps$.
	Let $R$ be the primitive root of $\sed{\mu(J)}$,
	that is, $\sed{\mu(J)} =\pexp{R}{k}$, for some $k\in \ZZ$.
	It follows that $R$ and $k$ are odd.
	By Corollary~\ref{cor:factor},
	$$
	\pexp{R}{2k}=\pexp{\sed{\mu(J)}}{2} =\sed{\mu(\lcycle{(C^{|D|})}{h_0 h_1})} \cdot \sed{\mu(\rcycle{g_0 g_1}{(D^{|C|})})}=\pexp{\sed{\mu(\lcycle{C}{h_0 h_1})}}{|D|}\ \cdot\ \pexp{\sed{\mu(\rcycle{g_0 g_1}{D})}}{|C|}
	$$

	By Corollary~\ref{cor:commute}, $\sed{\mu(\lcycle{C}{h_0 h_1})}$ and $\sed{\mu(\rcycle{g_0 g_1}{D})}$ commute.
	Hence they both commute with $\pexp{R}{2k}$ and by Fact~\ref{fact:commuteRoot},
	they are equal to $\pexp{R}{2i}$ and $\pexp{R}{2j}$ respectively, 
	for some $i,j\in\ZZ$
	(the exponents must be even because $R$ is odd and $|\lcycle{C}{h_0 h_1}|$ is even).
	Then $\pexp{R}{2k}= \pexp{R}{(2i |D| + 2j |C|)}$, so
	$i \cdot |D| + j \cdot |C| \equiv i+j \mod 2$ must be odd.
\end{proof}

This implies that either the half-parity is odd for all odd-length cycles in $G$ and even for all odd-length cycles in $H$, or vice versa.
(As a side note, let us mention this conclusion could be reached more generally, even when $\coarsePi{K}$ is not free, as long as the edges of $K$ admit an orientation such that no square $a,b,c,d$ of $K$ is oriented $a \to b \to c \to d$ and $a\to d$; if such an orientation exists, the algebraic length mod $4$ of a walk turns out to be a suitable invariant.)

\smallskip
The key to the parity approach is however in the next lemma, and in the corollary following it.
It will show that if an odd-length cycle in $G$ has odd half-parity, then $\mu(g,h_0)$ is equal or close to $\mu(g,h_1)$ for some vertex $g$ of the cycle.
Excluding such a vertex (or edge) from every odd cycle, we will later make a large subgraph of $G$ bipartite.

Intuitively, the lemma reflects the topological fact that in a map from a circle to a circle $\mu:S^1 \to S^1$ winding an odd number of times, there must be a pair of antipodal points that maps to antipodal points, that is, a point $x\in S^1\subseteq \mathbb{R}^2$ satisfying $\mu(x)=-\mu(-x)$.
The idea is then that given a cycle $C$ in $G$, we can view $\mu$ as a map from $\lcycle{C}{h_0 h_1}$ to $K \times K_2$, which can be extended piece-wise linearly to a continuous map from a circle to the topological space corresponding to $K\times K_2$.
If $C$ has odd half-parity, then this map will be winding an odd number of times.
The above fact then implies that some antipodal points map to antipodes in $K\times K_2$ and hence the to the same point in $K$.
If $K$ is an odd cycle and $|C|$ is odd, it can be shown that such antipodal points will occur as vertices of $\lcycle{C}{h_0 h_1}$ (instead of some general position in the continuous extension).
This is not true for circular cliques, but we can still show a slight relaxation.

\begin{lemma}\label{lem:fixpoint}
	Let $O=k_0k_1 \cat k_1 k_2 \cat \dots \cat k_{2n-1} k_0$ be a closed walk of length $2n$ in $K_{p/q}$, for $n,p$ odd and $2<\frac{p}{q}<4$.
	If $\sed{O} = \sed{R}^{\cdot 2}$ for some walk $R$ of odd length in $K_{p/q}$, then there is an index $i\in \ZZ_{2n}$ such that $k_i k_{i+n+1}$ and $k_{i+1} k_{i+n}$ are edges of $K_{p/q}$.
\end{lemma}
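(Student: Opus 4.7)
The plan is to translate the statement into the graph $K' \cong K_{p/q} \times K_2$ introduced in the proof of Lemma~\ref{lem:cycleGroup}, and then apply an intermediate value argument to the ``antipodal difference'' of the lifted walk. Concretely, I would set $v_i := \phi(k_i, i \bmod 2)$ so that $\varphi(O) = v_0 v_1 \dots v_{2n-1} v_0$ is a closed walk of length $2n$ in $K'$, and then lift $\varphi(O)$ to $\tilde v_\bullet \in \ZZ$ with increments $e_i := \tilde v_{i+1} - \tilde v_i$ odd and $|e_i| \leq p - 2q$. A short parity case-analysis (using that $n$ is odd, so $(i+n+1) \bmod 2 = i \bmod 2$ while the ``$K_2$-swap'' in $\phi$ costs a $+p$) shows that $k_i k_{i+n+1}$ is an edge of $K_{p/q}$ iff $\od(\tilde v_{i+n+1} - \tilde v_i + p) \in [-(p-2q), p-2q]$, and analogously $k_{i+1} k_{i+n}$ is an edge iff $\od(\tilde v_{i+n} - \tilde v_{i+1} + p) \in [-(p-2q), p-2q]$. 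Moreover, under the isomorphism $\pd \circ \varphi : \coarsepi{k_0}{K_{p/q}} \to \ZZ$ from Lemma~\ref{lem:cycleGroup}, the hypothesis $[O] = [R]^{\cdot 2}$ with $|R|$ odd forces the winding number $w := \pd(\varphi(O))/(2p)$ to be odd: writing $[R] \mapsto d$, one has $2d = 2w$ and $\pd(\varphi(R)) = dp$ is a sum of $|R|$ odd integers, hence odd, which forces $d$ odd.

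With this setup, set $M_i := \tilde v_{i+n} - \tilde v_i$, so that $M_0 + M_n = 2pw$ and $|M_{i+1} - M_i| \leq 2(p-2q)$. The product $(M_0 - pw)(M_n - pw) = -(M_0 - pw)^2$ is nonpositive, so the piecewise-linear extension of $M_\bullet$ on $[0, n]$ attains the value $pw$ at some $x^* = i + t$ with $t \in [0, 1]$; in particular $M_i + t(M_{i+1} - M_i) = pw$, i.e., $M_i = pw - t(e_{i+n} - e_i)$. Substituting this into the two target quantities yields
\[
  \tilde v_{i+n+1} - \tilde v_i + p \;=\; p(w+1) + (1-t)\,e_{i+n} + t\,e_i, \qquad
  \tilde v_{i+n} - \tilde v_{i+1} + p \;=\; p(w+1) - t\,e_{i+n} - (1-t)\,e_i.
\]
Since $w$ is odd, $p(w+1)$ is a multiple of $2p$, and since each convex combination of $e_i, e_{i+n}$ lies in $[-(p-2q), p-2q]$, both $\od$-values fall in $[-(p-2q), p-2q]$, giving the two edges at this single $i$.

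The main obstacle I anticipate is the initial bookkeeping: unpacking $\phi$ and $\varphi$ to see that edgeness in $K_{p/q}$ collapses (despite the $i$-parity case split) into the single condition ``$\od(\cdots + p)$ is small'', and tracking the two parity facts (that $w$ is odd, and that a sum of an even number of odd $e_i$'s is even, which is what makes these ``$\cdots + p$'' quantities odd). Once the $K'$-lift is correctly described, the intermediate value step and its consequence for the two target $\od$-conditions are essentially a one-line calculation.
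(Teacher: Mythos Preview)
Your proposal is correct and takes a genuinely different route from the paper's own proof, though both share the same opening translation.

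Both arguments begin by passing to $K'\cong K_{p/q}\times K_2$ via $\varphi$, rewriting the desired edge conditions as $|\vec d(c_i-c_{i+n+1})|\geq 2q$ and $|\vec d(c_{i+1}-c_{i+n})|\geq 2q$ (equivalently, after your ``$+p$'' shift, as $|\vec d(\cdot)|\leq p-2q$), and extracting from $[O]=[R]^{\cdot 2}$ with $|R|$ odd that the winding number $w=\Delta(\varphi(O))/(2p)$ is odd (equivalently $\Delta(\varphi(O))\equiv 2\pmod 4$).

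From there the paper argues by contradiction: assuming no index works, it shows the identity
\[
\vec d(c_{i+1}-c_i)-\vec d(c_{i+n+1}-c_{i+n})=\vec d(c_{i+1}-c_{i+n+1})-\vec d(c_i-c_{i+n})
\]
holds for every $i$, telescopes the sum over $i=0,\dots,n-1$, and reaches a parity contradiction modulo~4.

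Your argument is instead a direct intermediate-value computation on the lift $\tilde v_\bullet$: with $M_i=\tilde v_{i+n}-\tilde v_i$ and $M_0+M_n=2pw$, you locate $i,t$ with $M_i+t(e_{i+n}-e_i)=pw$ and then expand both target quantities as $p(w+1)$ plus a signed convex combination of $e_i,e_{i+n}$. Since $w$ is odd, $p(w+1)\equiv 0\pmod{2p}$, and since $|e_j|\leq p-2q$, both land in $[-(p-2q),p-2q]$. This is exactly the discrete Borsuk--Ulam picture the paper describes informally just before the lemma (``antipodal points map to antipodal points''), made precise. The one subtlety worth recording explicitly in a write-up is that although $t$ need not be rational, the quantity $(1-t)e_{i+n}+te_i = e_{i+n}+M_i-pw$ is automatically an integer, so taking $\vec d$ is legitimate.

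What each approach buys: the paper's telescoping argument is purely combinatorial and never lifts to $\ZZ$ or invokes continuity; your IVT argument is shorter, constructive (it pinpoints the index), and makes the topological intuition transparent. Both are of comparable length once the common setup is in place.
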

\begin{proof}
	We reuse the definitions of $\od,K',\pd,\varphi$ of the proof of Lemma~\ref{lem:cycleGroup}.
	Let us first translate the statement in these terms.
	In particular, $\varphi(O)$ is a closed walk in $K'$ of length $2n$.
	Since $|R|$ is odd, $\pd(\varphi(R))$ is odd too (from the definitions, it is a sum of $|R|$ summands, each of which corresponds to an edge of $K'$ and hence is an odd integer).
	We showed that $O \sim R^2$ implies $\pd(\varphi(O))=\pd(\varphi(R^2))=2\cdot \pd(\varphi(R))$
	and hence $\pd(\varphi(O)) \equiv 2 \mod 4$,
	which is all we need to know about $\varphi(O)$.

	Let $\varphi(O)=c_0 c_1 \cat c_1 c_2 \cat \dots \cat c_{2n-1} c_0$.
	We wish to show that for some $i\in \ZZ_{2n}$, $k_i k_{i+n+1}$ and $k_{i+1} k_{i+n}$ are edges of $K_{p/q}$. This is the same as saying that the difference between endpoints is in $\{q,q+1,\dots,\lfloor\frac{p}{2}\rfloor,\lceil\frac{p}{2}\rceil,\dots,p-q\} \mod p$,
	which by definition of $\varphi$ is equivalent to saying that the difference between endpoints of $c_i c_{i+n+1}$ and $c_{i+1} c_{i+n}$ is in
	$$\{2q,2q+2,\dots,2\lfloor\frac{p}{2}\rfloor,2\lceil\frac{p}{2}\rceil,\dots,2(p-q)\} = 
	\{2q,2q+2,\dots,p-1,-(p-1),\dots,-2q\},$$
	that is,
	$|\od(c_i - c_{i+n+1})|\geq 2q$ and $|\od(c_{i+1} - c_{i+n})|\geq 2q$.
	We can forget $K_{p/q}$ and focus on the walk $\varphi(O)$ in $K'$ from now on.
	
	The statement we want to prove is now the following:
	if $\varphi(O)=c_0 c_1 \cat c_1 c_2 \cat \dots \cat c_{2n-1} c_0$ is a walk in $K'$ of length $2n$ for $n$ odd such that $\pd(\varphi(O)) \equiv 2 \mod 4$, then there is an $i\in\ZZ_{2n}$ such that $|\od(c_i - c_{i+n+1})|\geq 2q$ and $|\od(c_{i+1} - c_{i+n})|\geq 2q$.
	
	Suppose to the contrary that for all $i\in\ZZ_{2n}$, $|\od(c_i - c_{i+n+1})|< 2q$ or $|\od(c_{i+1} - c_{i+n})|< 2q$.
	We claim that for all $i\in\ZZ_{2n}$, 
	\begin{equation}\label{eq:fixpointproof}\tag{*}
	\od(c_{i+1}-c_i) - \od(c_{i+n+1}-c_{i+n}) = \od(c_{i+1}-c_{i+n+1}) - \od(c_i - c_{i+n})
	\end{equation}
	Fix $i\in \ZZ_{2n}$ and assume first that $|\od(c_i - c_{i+n+1})|< 2q$. Then $c_i c_{i+1} \in E(K')$ means $|\od(c_{i+1} - c_{i})|\leq p-2q$ and hence $c_i,c_{i+1}$ and $c_{i+n+1}$ are contained in an interval of length less than $p$ of $\ZZ_{2p}$, which implies $\od(c_{i+1} - c_{i})+\od(c_i - c_{i+n+1})=\od(c_{i+1} - c_{i+n+1})$.
	Similarly $c_{i+n+1}c_{i+n} \in E(K')$ implies that $\od(c_i - c_{i+n+1}) + \od(c_{i+n+1} - c_{i+n}) = \od(c_i - c_{i+n})$.
	Subtracting the two gives~\eqref{eq:fixpointproof}.
	Note also that $|\od(c_i - c_{i+n})|<2q + p-2q = p$.
	The proof is analogous in the other case, when $|\od(c_{i+1} - c_{i+n})|< 2q$.
	
	Let us now sum \eqref{eq:fixpointproof} over $i=0,1,\dots,n-1$.
	The left side then amounts to $\sum_{i=0}^{n-1} \od(c_{i+1}-c_i) - \sum_{i=n}^{2n-1} \od(c_{i+1}-c_i)$, while the right side telescopes to simply $-\od(c_0-c_{0+n})+\od(c_{n-1+1}-c_{n-1+n+1}) = -\od(c_0-c_{n})+\od(c_{n}-c_{0}) = 2 \od(c_{n}-c_{0})$ (the last equality follows from $|\od(c_0-c_{n})|<p$).
	Since $n$ is odd, $c_{n}$ and $c_{0}$ belong to different sides of the bipartition of $K'$ and hence $\od(c_{n}-c_{0})$ is odd.
	Therefore  $\sum_{i=0}^{n-1} \od(c_{i+1}-c_i) - \sum_{i=n}^{2n-1} \od(c_{i+1}-c_i) = 2 \od(c_{n}-c_{0}) \equiv 2 \mod 4$.
	Since similarly $\od(c_{i+1}-c_i)$ is odd for all $i$, and $n$ is odd, we have $2 \cdot \sum_{i=n}^{2n-1} \od(c_{i+1}-c_i) \equiv 2 \mod 4$.
	Together, this implies $\sum_{i=0}^{2n-1} \od(c_{i+1}-c_i) \equiv 0 \mod 4$.	
	But this contradicts our assumption that $\Delta(\varphi(O)) = \sum_{i=0}^{2n-1} \od(c_{i+1}-c_i)  \equiv 2 \mod 4$.
\end{proof}

\begin{corollary}\label{cor:fixpoint}
	Let $\mu:G\times H \to K_{p/q}$ for $2<\frac{p}{q}<4$ and $p$ odd. Let $h_0h_1\in H$.
	Let $C$ be an odd-length closed walk in $G$.
	If $C$ has odd half-parity,
	then there is an edge $gg'$ of $C$ such that $\mu(g,h_0)\mu(g',h_0)$ and $\mu(g,h_1)\mu(g',h_1)$ are edges in $K_{p/q}$.
\end{corollary}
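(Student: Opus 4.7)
The plan is to reduce the corollary directly to Lemma~\ref{lem:fixpoint} applied to the walk $O := \mu(\lcycle{C}{h_0 h_1})$. First I would unpack the construction: writing $C = g_0 g_1 \cat g_1 g_2 \cat \dots \cat g_{n-1} g_0$ with $n = |C|$ odd, the definition of $\lcycle{C}{h_0 h_1}$ gives a closed walk of length $2n$ in $G\times H$ whose $i$-th vertex is $(g_{i \bmod n}, h_{i \bmod 2})$. Composing with $\mu$ yields $O = k_0 k_1 \cat \dots \cat k_{2n-1} k_0$ where $k_i = \mu(g_{i \bmod n}, h_{i \bmod 2})$. The odd half-parity hypothesis on $C$ says precisely that $\sed{O} = \sed{X}^{\cdot 2}$ for some $X \in \coarsePi{K_{p/q}}$ of odd length; taking a reduced representative $R$ of $X$ (which has odd length, since parity of length is preserved under $\sim$) gives $\sed{O} = \sed{R}^{\cdot 2}$, matching the hypothesis of Lemma~\ref{lem:fixpoint}. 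Both $n$ and $p$ are odd by assumption, so the lemma applies.

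The lemma then hands me an index $i\in \ZZ_{2n}$ with $k_i k_{i+n+1}$ and $k_{i+1} k_{i+n}$ edges of $K_{p/q}$. The remainder is index arithmetic: since $n$ is odd, $n+1$ is even, so $(i+n+1) \bmod 2 = i \bmod 2$ while $(i+n+1)\bmod n = (i+1) \bmod n$, giving $k_{i+n+1} = \mu(g_{(i+1) \bmod n}, h_{i \bmod 2})$; symmetrically $k_{i+n} = \mu(g_{i \bmod n}, h_{(i+1)\bmod 2})$. Setting $g := g_{i \bmod n}$ and $g' := g_{(i+1) \bmod n}$, the pair $gg'$ is an edge appearing in $C$, and the two edges handed by the lemma read
\[
\mu(g,h_{i\bmod 2})\,\mu(g',h_{i\bmod 2})\qquad\text{and}\qquad \mu(g,h_{(i+1)\bmod 2})\,\mu(g',h_{(i+1)\bmod 2}).
\]
Since one of $i,i+1$ is even and the other odd, these are exactly the two edges $\mu(g,h_0)\mu(g',h_0)$ and $\mu(g,h_1)\mu(g',h_1)$ claimed in the corollary.

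All the real work has been done in Lemma~\ref{lem:fixpoint} (which carries out the topological antipode-type argument) and Lemma~\ref{lem:paritySwitch} (which guarantees the half-parity is well defined for odd closed walks). The only subtlety to watch in this derivation is the bookkeeping that ensures the two chords $k_i k_{i+n+1}$, $k_{i+1} k_{i+n}$ produced by the lemma project onto the \emph{same} pair $(g,g')$ in $G$ with matched $H$-coordinates---this is exactly where the parity of $n$ and the fact that the $H$-coordinate alternates every step are used. No further argument is needed.
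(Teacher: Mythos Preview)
Your proposal is correct and follows exactly the same route as the paper: apply Lemma~\ref{lem:fixpoint} to $O=\mu(\lcycle{C}{h_0h_1})$ and then unwind the indexing to see that the two chords $k_i k_{i+n+1}$, $k_{i+1}k_{i+n}$ become $\mu(g,h_j)\mu(g',h_j)$ and $\mu(g',h_{1-j})\mu(g,h_{1-j})$ for an edge $gg'$ of $C$. Your write-up is in fact a bit more explicit than the paper's (you spell out the modular arithmetic and the passage from $X\in\coarsePi{K_{p/q}}$ to an actual odd-length walk $R$), but the argument is the same.
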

\begin{proof}
	Recall $C$ having odd half-parity means $\sed{\mu(\lcycle{C}{h_0 h_1})} = \pexp{X}{2}$ for some odd $X \in \coarsePi{K_{p/q}}$.
	The claim follows then from Lemma~\ref{lem:fixpoint} applied to the closed walk $\mu(\lcycle{C}{h_0 h_1})$: it has length $2|C|$, where $|C|$ is odd, and 
	vertices indexed with $i$, $i+|C|+1$, $i+1$ and $i+|C|$ are $\mu(g,h_j), \mu(g',h_j), \mu(g',h_{1-j})$ and $\mu(g,h_{1-j})$ respectively, for some $gg' \in C$ and $j\in\{0,1\}$.
\end{proof}	

Finally, we use what we obtained to get a graph homomorphism $G\to K$, similarly as in~\cite{El-ZaharS85}, except for using the relaxed condition on edges instead of a condition on vertices.

\begin{lemma}\label{lem:cutOddCycles}
	Let $\mu: G\times H \to K$.  Let $g_0g_1\in G$, $h_0h_1\in H$.	
	If every odd-length closed walk in $G$ has an edge $gg'$ such that $\mu(g,h_0)\mu(g',h_0)\in K$ and $\mu(g,h_1)\mu(g',h_1)\in K$, then $G\to K$.
\end{lemma}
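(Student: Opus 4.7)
The plan is to partition $V(G)$ using a bipartite subgraph and combine the two maps $g\mapsto \mu(g,h_0)$ and $g\mapsto \mu(g,h_1)$ according to the resulting 2-coloring. Call an edge $gg'\in E(G)$ \emph{good} if both $\mu(g,h_0)\mu(g',h_0)\in E(K)$ and $\mu(g,h_1)\mu(g',h_1)\in E(K)$; the other edges are \emph{bad}. The hypothesis says that every odd closed walk in $G$ uses at least one good edge. Equivalently, the spanning subgraph $G''$ of $G$ consisting of only the bad edges contains no odd closed walk, and therefore is bipartite. Let $c:V(G)\to\{0,1\}$ be a proper $2$-coloring of $G''$ (extending arbitrarily on vertices isolated in $G''$, handling each connected component of $G''$ independently), and define $\mu':V(G)\to V(K)$ by $\mu'(g)=\mu(g,h_{c(g)})$.

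To verify that $\mu'$ is a graph homomorphism, fix any edge $gg'\in E(G)$. If $gg'$ is good, then all four products $\mu(g,h_i)\mu(g',h_j)$ for $(i,j)\in\{0,1\}^2$ are edges of $K$: the diagonal cases $(0,0)$ and $(1,1)$ hold by the definition of good, while the cases $(0,1)$ and $(1,0)$ hold because $(g,h_0)(g',h_1)$ and $(g,h_1)(g',h_0)$ are edges of $G\times H$ (using $gg'\in E(G)$ and $h_0h_1\in E(H)$), so their images under the homomorphism $\mu$ are edges of $K$. Hence $\mu'(g)\mu'(g')$ is an edge of $K$ regardless of $c(g),c(g')$. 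If $gg'$ is bad, then $gg'\in E(G'')$, so $c(g)\neq c(g')$; without loss of generality $c(g)=0$ and $c(g')=1$, giving $\mu'(g)\mu'(g')=\mu(g,h_0)\mu(g',h_1)$, which is again an edge of $K$ because $(g,h_0)(g',h_1)\in E(G\times H)$.

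There is no real obstacle in this proof: the only nontrivial input is the standard fact that a graph with no odd closed walk is bipartite, applied to the subgraph $G''$ of bad edges. The edge $g_0g_1$ appearing in the statement plays no role here, being present only for consistency with the preceding lemmas in the section. The whole argument works uniformly for any target graph $K$, not just circular cliques, so the content of the lemma is really the guarantee provided by Corollary~\ref{cor:fixpoint}; once that is in hand, extracting the homomorphism is essentially a two-line argument of combining the ``upper'' and ``lower'' slice colorings $\mu(\cdot,h_0)$ and $\mu(\cdot,h_1)$ along a bipartition.
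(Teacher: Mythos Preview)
Your proof is correct and essentially identical to the paper's: both remove the ``good'' edges to obtain a bipartite spanning subgraph, take a 2-coloring $c$ (the paper calls it $\delta$ with values $h_0,h_1$), and define $\mu'(g)=\mu(g,h_{c(g)})$, verifying the edge cases exactly as you do. Your observation that $g_0g_1$ plays no role is also accurate.
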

\begin{proof}
	Let $G'$ be the subgraph of $G$ obtained by removing those edges $gg'\in G$ which satisfy $\mu(g,h_0)\mu(g',h_0) \in K$ and $\mu(g,h_1)\mu(g',h_1) \in K$.
	Then the assumption says that $G'$ is bipartite.
	Fix a bipartition of $G'$ and let $\delta(g)=h_0$ for $g\in V(G)$ on one side of it and $\delta(g)=h_1$ for $g$ on the other side. In other words, $\delta$ is a graph homomorphism from $G'$ to $h_0 h_1$, a subgraph of $H$ isomorphic to $K_2$.
	
	Define $\gamma: V(G) \to V(K)$ as $\gamma(g) = \mu(g,\delta(g))$ for $g\in V(G)$.
	To show that $\gamma$ is a graph homomorphism, consider any edge $gg'\in G$.
	If $gg' \in G'$, then $\delta(gg')\in H$ (in fact $\delta(gg')=h_0 h_1$) and $gg' \in G$, which implies $\gamma(gg')=\mu((g,\delta(g))(g',\delta(g'))) \in K$.
	If $gg' \not\in G'$, then either $\delta(gg')\in H$ and $\gamma(gg')\in E(K)$ follows as before, or $\delta(g)=\delta(g')=h_i$ for some $i\in\{0,1\}$, which implies $\gamma(gg')=\mu(g,h_i)\mu(g',h_i)$, which is an edge of $K$ by construction of $G'$.
\end{proof}

Since the above lemma is the one that gives the final graph homomorphism, we note a potentially interesting generalization which follows straightforwardly from the same proof: let $\mu:G\times H\to K$, let $H'$ be an induced subgraph of $H$, and let $G'$ be the subgraph of $G$ obtained by removing those edges $gg' \in E(G)$ such that $\forall_{h,h' \in V(H')} \mu(g,h)\mu(g',h')\in K$; then $G'\to H'$ implies $G\to K$.

\begin{theorem}\label{thm:circular}
	The circular clique $K_{p/q}$ is multiplicative, for $2\leq \frac{p}{q} < 4$.
\end{theorem}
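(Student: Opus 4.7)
The plan is to combine Lemma~\ref{lem:paritySwitch}, Corollary~\ref{cor:fixpoint}, and Lemma~\ref{lem:cutOddCycles} into a single argument producing $G \to K_{p/q}$ or $H \to K_{p/q}$ from a given $\mu : G \times H \to K_{p/q}$. First I would dispose of the easy cases: if $p/q = 2$, then $K_{p/q}$ is homomorphically equivalent to $K_2$, whose multiplicativity is the classical fact that a product is bipartite iff one factor is; and for $2 < p/q < 4$, if one of $G, H$ is bipartite (say $G$) then $G \to K_2 \to K_{p/q}$ and we are done. So I may assume $G$ and $H$ are both connected and non-bipartite, and---restricting to the setting of the lemmas in this section---that $p$ is odd.

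Fix edges $g_0 g_1 \in G$ and $h_0 h_1 \in H$. By Lemma~\ref{lem:cycleGroup}, $\coarsepi{\mu(g_0,h_0)}{K_{p/q}} \cong \ZZ$; pick a generator $R$, which has odd length parity (the generator $O$ from the proof of that lemma is a walk of length $p$). Since $\lcycle{C}{h_0 h_1}$ has even length for every closed walk $C$ in $G$, the exponent $i(C) \in \ZZ$ in $\sed{\mu(\lcycle{C}{h_0 h_1})} = \pexp{R}{2 i(C)}$ is well-defined, and the half-parity of $C$ in the sense of the paper is simply $i(C) \bmod 2$; analogously define $j(D)$ for closed walks $D$ in $H$. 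Picking any odd closed walk $D_0$ from $h_0$ in $H$ (which exists since $H$ is non-bipartite) and applying Lemma~\ref{lem:paritySwitch} to $D_0$ together with every odd closed walk $C$ from $g_0$ in $G$ yields $i(C) + j(D_0)$ odd; hence $i(C) \bmod 2$ is the same integer for every such $C$. A conjugation argument---trivial because $\ZZ$ is abelian---would then let me extend this common half-parity to odd closed walks in $G$ based at arbitrary vertices, by lifting a path from $g_0$ to the given basepoint to a walk in $G\times H$ whose projection to $H$ is an even-length closed walk at $h_0$ (possible since $H$ is non-bipartite). Symmetrically a common half-parity is defined for odd closed walks in $H$, and Lemma~\ref{lem:paritySwitch} forces the two half-parities to be different.

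Without loss of generality the common half-parity in $G$ is odd (otherwise swap the roles of $G$ and $H$ to obtain $H \to K_{p/q}$ by the same reasoning). Then Corollary~\ref{cor:fixpoint} applies to every odd-length closed walk in $G$, producing on it an edge $gg'$ with both $\mu(g,h_0)\mu(g',h_0)$ and $\mu(g,h_1)\mu(g',h_1)$ in $E(K_{p/q})$. This is exactly the hypothesis of Lemma~\ref{lem:cutOddCycles}, which outputs the desired homomorphism $G \to K_{p/q}$.

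The main obstacle I anticipate is the well-definedness of the common half-parity---specifically the conjugation step that base-changes an arbitrary odd closed walk in $G$ to one starting at $g_0$ without disturbing $i(C) \bmod 2$, which needs the length-parity bookkeeping in the $H$-projection. A secondary issue is the reduction to odd $p$, which is not covered by the statement of Lemmas~\ref{lem:cycleGroup}--\ref{lem:fixpoint}; I would address it by passing to a homomorphically equivalent representation of $K_{p/q}$ with odd $p$ whenever such a representation exists, and otherwise by extending the algebraic-length-mod-$4$ argument alluded to in the parenthetical remark after Lemma~\ref{lem:paritySwitch} to a suitable orientation of $K_{p/q}$.
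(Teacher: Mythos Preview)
Your argument for odd $p$ is essentially the paper's own proof: Lemma~\ref{lem:cycleGroup} supplies the free group, Lemma~\ref{lem:paritySwitch} gives the half-parity dichotomy, conjugation spreads it to arbitrary basepoints, and Corollary~\ref{cor:fixpoint} feeds Lemma~\ref{lem:cutOddCycles}. One small slip in the conjugation step: what makes the lift possible is that $G$ (not $H$) is non-bipartite, so an \emph{even}-length walk $W$ from $g_0$ to any $g'$ exists in $G$, allowing the paper's choice $W'|_H=(h_0h_1\cat h_1h_0)^{|W|/2}$.

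The genuine gap is the even-$p$ case; neither of your two proposals closes it. A homomorphically equivalent $K_{p'/q'}$ with $p'$ odd requires $p'/q'=p/q$, and when $p/q$ in lowest terms has even numerator (e.g.\ $8/3$ or $10/3$) no such $p'$ exists. The algebraic-length-mod-$4$ remark you invoke only salvages the conclusion of Lemma~\ref{lem:paritySwitch}; it says nothing about Lemma~\ref{lem:fixpoint}, whose proof genuinely relies on $p$ odd (the generators of the Cayley graph $K'$ are odd integers precisely because $p$ is), and without that lemma you have no Corollary~\ref{cor:fixpoint} and hence no input to Lemma~\ref{lem:cutOddCycles}.

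The paper's resolution is quite different and bypasses the lemmas entirely: rationals with odd numerator are dense in $(2,4)$, and the circular chromatic number $\chi_c(G)=\inf\{p'/q':G\to K_{p'/q'}\}$ is known to be attained. From $G\times H\to K_{p/q}$ one gets $G\times H\to K_{p'/q'}$ for every $p'/q'>p/q$, so by the odd-$p$ case $G\to K_{p'/q'}$ or $H\to K_{p'/q'}$ for each such odd-numerator $p'/q'$. If $G\not\to K_{p/q}$ then $\chi_c(G)>p/q$, forcing $H\to K_{p'/q'}$ for all odd-numerator $p'/q'\in(p/q,\chi_c(G))$, whence $\chi_c(H)\leq p/q$ and $H\to K_{p/q}$.
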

\begin{proof}
	We first show the claim for $p$ odd (in particular $2<\frac{p}{q}$).
	Let $\mu : G\times H \to K_{p/q}$ for some graphs $G,H$ and let $g_0g_1 \in G$, $h_0 h_1\in H$.
	We can assume $G$ and $H$ are connected and non-bipartite.
	By Lemma~\ref{lem:cycleGroup}, $\coarsepi{\mu(g_0,h_0)}{K_{p/q}}$ is isomorphic to $\ZZ$ and hence a free group.
	By Lemma~\ref{lem:paritySwitch}, for any odd-length closed walks $C,D$ in $G,H$ from $g_0,h_0$ respectively, the half-parities of $C$ and $D$ are different.
	Assume without loss of generality that the half-parity is odd for all odd-length closed walks $C$ from $g_0$ in $G$ (otherwise swap $G$ and $H$).
	
	If $C'$ is an odd-length closed walk from $g'$ in $G$, we claim $C'$ has odd half-parity too.
	Indeed, taking any even-length walk $W$ from $g_0$ to $g'$,
	$W \cat C' \cat W^{-1}$ is an odd-length closed walk from $g_0$ in $G$.
	It hence has odd half-parity, meaning $\sed{\mu(\lcycle{(W \cat C' \cat W^{-1})}{h_0 h_1})} = \pexp{X}{2}$ for some odd $X \in \coarsePi{K_{p/q}}$.
	Thus $\sed{\mu(\lcycle{C'}{h_0 h_1})} =  \pexp{Y}{2}$ for $Y= \sed{\mu(W')}^{-1} \cdot X \cdot \sed{\mu(W')}$, where $W'$ is the walk with $W'|_G=W$ and $W'|_H = (h_0 h_1 \cat h_1 h_0)^{|W|/2}$.
	Hence $C'$ has odd half-parity too.
	
	Therefore by Corollary~\ref{cor:fixpoint} every odd-length closed walk in $G$ has an edge with the property from the claim and hence Lemma~\ref{lem:cutOddCycles} gives a homomorphism $G\to K_{p/q}$.
	
	Consider now $K_{p/q}$ with $p$ even.
	Suppose $G\times H \to K_{p/q}$.
	Then $G\times H \to K_{p'/q'}$ for any $p',q'$ with $\frac{p}{q}<\frac{p'}{q'}$ and
	thus $G \to K_{p'/q'}$ or $H\to K_{p'/q'}$ for $p'$ odd.
	Since the set of rationals $2<\frac{p'}{q'}<4$ with $p'$ odd is dense in the interval $(2,4)$,
	and since $\chi_c(G) = \inf\{\frac{p'}{q'} : G \to K_{p'/q'}\}$ is known to be attained~\cite{Zhu01}, it follows that $G\to K_{p/q}$ or $H\to K_{p/q}$.
\end{proof}

\section{The case when \texorpdfstring{$K$}{K} is square-free}\label{sec:recoloring}
As sketched in the introduction, the proof will rely on inductively improving a $K$-coloring $\mu$ of $G\times H$ by recoloring. 
Recall \marginpar{\footnotesize{recoloring}}
that we say a $K$-coloring $\mu$ of a graph $G$ can be \emph{recolored} to $\mu^*$ if there is a sequence $\mu_0,\dots,\mu_n$ of $K$-colorings of $G$ with $\mu_0=\mu, \mu_n=\mu^*$, where $\mu_{i+1}$ differs from $\mu_i$ for at most one value $g\in V(G)$.
Note that if $\mu^*$ is obtained from $\mu$ by changing colors at some independent set of vertices (a set $S\subseteq V(G)$ such that $S\times S \cap E(G) = \emptyset$), then $\mu^*$ can be obtained by recoloring (considering vertices of $S$ one by one, in any order).
Recoloring can be thought as a discrete homotopy, it preserves the topological invariants we defined before; we will need this only in the following case (see~\cite{Wrochna15} for a more constructive statement; note also this works for general $K$ by taking $\coarsePi{K}$ instead of $\fundPi{K}$).

\begin{lemma}\label{lem:invariant}
	Let $\mu,\mu^*: G \to K$ for $K$ square-free. Assume $\mu$ can be recolored to $\mu^*$.
	Let $C$ be any closed walk in $G$.
	Then $\red{\mu(C)}$ and $\red{\mu^*(C)}$ are conjugate, that is, there is a $Q\in \fundPi{K}$ such that $\red{\mu(C)} = Q \cdot \red{\mu^*(C)} \cdot Q^{-1}$.
\end{lemma}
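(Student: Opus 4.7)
The plan is to reduce to the case of a single recoloring step. Conjugacy composes: if $\red{\mu_i(C)} = Q_i \cdot \red{\mu_{i+1}(C)} \cdot Q_i^{-1}$ for each consecutive pair in the recoloring sequence $\mu=\mu_0,\ldots,\mu_n=\mu^*$, then $Q = Q_0 \cdot Q_1 \cdots Q_{n-1}$ conjugates $\red{\mu(C)}$ into $\red{\mu^*(C)}$. So it suffices to treat the case when $\mu$ and $\mu^*$ differ at exactly one vertex $v$, with $\mu(v) = k$ and $\mu^*(v) = k'$; if $k = k'$ there is nothing to prove, so assume $k \neq k'$.

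The first substantive step is to exploit square-freeness to show that all neighbors of $v$ in $G$ are mapped (under both $\mu$ and $\mu^*$) to a single vertex $u_0 \in V(K)$. For any neighbor $u$ of $v$, the value $\mu(u) = \mu^*(u)$ must be a common neighbor of both $k$ and $k'$ in $K$ (since each of $\mu$, $\mu^*$ is a homomorphism). If two neighbors $u,u'$ of $v$ mapped to distinct vertices $a,b \in V(K)$, then $k,a,k',b$ would be a non-trivial square of $K$, contradicting the hypothesis.

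Given this, I would split $C$ along its visits to $v$. Every occurrence of $v$ strictly inside $C$ appears in a length-two subwalk $uv \cat vu'$ for some neighbors $u,u'$ of $v$; these are sent by $\mu$ to $u_0 k \cat k u_0$ and by $\mu^*$ to $u_0 k' \cat k' u_0$, both of which reduce to the empty walk at $u_0$. The segments of $C$ between visits lie entirely in $G - v$, where $\mu$ and $\mu^*$ agree. Hence if $C$ starts at some vertex $w \neq v$, we get $\red{\mu(C)} = \red{\mu^*(C)}$ and may take $Q = \eps$. If $C$ starts and ends at $v$, the same cancellations yield $\red{\mu(C)} = \red{k u_0 \cat W \cat u_0 k}$ and $\red{\mu^*(C)} = \red{k' u_0 \cat W \cat u_0 k'}$ for a common reduced walk $W$ from $u_0$ to $u_0$ in $K$; the length-two walk $Q = k u_0 \cat u_0 k'$ (well-defined in $K$ since both $k$ and $k'$ are adjacent to $u_0$) then satisfies $Q \cdot \red{\mu^*(C)} \cdot Q^{-1} = \red{\mu(C)}$, as two pairs of the form $u_0 k' \cat k' u_0$ cancel after concatenation.

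The main obstacle, and the only place where the hypothesis enters essentially, is the second step: square-freeness is what collapses $\mu(N_G(v))$ to a single vertex $u_0$ and gives us a canonical candidate for $Q$. Without it, the neighborhood of $v$ could a priori be sent to several vertices of $K$ and the local argument would break down; one would then expect only a weaker conjugacy statement in the coarser groupoid $\coarsePi{K}$ rather than in $\fundPi{K}$. Everything else is bookkeeping around the strictly local effect of a single recoloring step at $v$.
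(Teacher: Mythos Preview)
Your proof is correct and follows essentially the same approach as the paper: reduce to a single recoloring step, use square-freeness to see that each internal visit to the recolored vertex contributes a back-and-forth step that reduces to $\eps$ under both $\mu$ and $\mu^*$, and handle the case where $C$ starts at $v$ by conjugating with the length-two walk $ku_0\cat u_0k'$. The only cosmetic difference is that you phrase the key observation globally (all of $N_G(v)$ maps to a single $u_0$) while the paper states it locally per occurrence ($\mu(c_{i-1})=\mu(c_{i+1})$); the paper's $Q=ab\cat bc$ is exactly your $ku_0\cat u_0 k'$.
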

\begin{proof}
	It suffices to prove the lemma in the case $\mu^*$ is obtained in a single step, changing the color of $g\in V(G)$ only.
	Let $C=c_0c_1 \cat c_1 c_2 \cat \dots \cat c_{n-1} c_0$.
	For any $i \in \ZZ_n$ such that $c_i=g$, since $G$ is loop-free, $c_{i-1}$ and $c_{i+1}$ are different from $g$. Thus $\mu(c_{i-1})=\mu^*(c_{i-1})$ and $\mu(c_{i+1})=\mu^*(c_{i+1})$, which means $\mu(c_{i-1}), \mu(c_{i}), \mu(c_{i+1}), \mu^*(c_{i})$ is a square in $K$.
	Since $K$ is square-free, this implies $\mu(c_{i-1})=\mu(c_{i+1})$ and thus $\red{\mu(c_{i-1} c_i \cat c_i c_{i+1})} = \eps = \red{\mu^*(c_{i-1} c_i \cat c_i c_{i+1})}$.
	Hence, if $c_0\neq g$, $\red{\mu(C)} = \red{\mu^*(C)}$,
	while if $c_0=g$, then 
	$$\red{\mu(C)} = \red{\mu(c_0 c_1)} \cdot \red{\mu(c_1c_2 \cat \dots \cat c_{n-2} c_{n-1})} \cdot \red{\mu(c_{n-1}c_{0})} = \red{\mu(c_0 c_1)} \cdot \red{\mu^*(c_1c_2 \cat \dots \cat c_{n-2} c_{n-1})} \cdot \red{\mu(c_{n-1}c_{0})} =$$
	$$= \red{\mu(c_0 c_1)} \cdot \red{\mu^*(c_0 c_1)}^{-1} \cdot  \red{\mu^*(C)} \cdot \red{\mu^*(c_{n-1} c_0)}^{-1} \cdot \red{\mu(c_{n-1}c_{0})} = Q \cdot \red{\mu^*(C)} \cdot Q^{-1}$$ for $Q=ab \cat bc$
	where $a=\mu(c_0)$, $b=\mu(c_1)=\mu^*(c_1)=\mu(c_{n-1})=\mu^*(c_{n-1})$ and $c= \mu^*(c_0)$.
\end{proof}

The above lemma, together with the observation that $Q \cdot \pexp{R}{i} \cdot Q^{-1} = \pexp{(Q \cdot R \cdot Q^{-1})}{i}$, implies that if any case of Theorem~\ref{thm:commute} is true for $\mu$, then it is also true for any $K$-coloring reachable from it by recoloring.
We use this to improve a given $K$-coloring without losing the conclusions of Theorem~\ref{thm:commute}.\looseness=-1

\medskip
By \marginpar{\footnotesize{$H$-improve}}
\emph{$H$-improving} a $K$-coloring $\mu$ of $G\times H$, we mean recoloring $\mu$ to make $\mu(\cdot,h)$ as constant as possible, for every $h\in V(H)$.
Formally, $\mu^*:G\times H \to K$ $H$-improves over $\mu$ if the number of triples $g,g'\in V(G), h\in V(H)$ such that $g,g'$ have a common neighbor in $G$ and $\mu^*(g,h)\neq \mu^*(g',h)$ is lower than for $\mu$.
We say $\mu$ can be $H$-improved by recoloring if there is a $\mu^*$ to which it can be recolored and which $H$-improves over $\mu$.

For readers familiar with covering spaces in topology, the intuitions behind `improving' can be explained in the following terms (which in fact could be made formal using the theory of graph coverings presented in~\cite{kwak2007graphs}).
Consider any base vertex $(g_0,h_0)$ of $G\times H$ with any edge $h_0h_1\in H$.
In the first case of Theorem~\ref{thm:commute}, when all cycles in $G\times h_0 h_1$ map to closed walks in $K$ that are topologically trivial, we can lift the $K$-coloring $\mu$ to a graph homomorphism mapping $G\times h_0h_1$ to the universal cover of $K$, which is a tree (its nodes are the reduced walks based at $\mu(g_0,h_0)$).
This graph homomorphism to a tree can then be folded until it becomes a homomorphism to an edge, constant on $V(G)\times \{h_0\}$ and on $V(G)\times \{h_1\}$.
We fold it by finding extremal vertices in $G\times h_0 h_1$---those which map the furthest from a fixed base vertex in the universal cover---and changing the mapping so that they map closer.
For example, if $\mu$ maps a walk starting at the base point to $ab \cat bc \cat cd \cat dc \cat cb \cat ba$, then we recolor the extremal vertex (colored $d$) so that the walk maps to $ab \cat bc \cat cb \cat bc \cat cb \cat ba$; then we recolor the new extremal vertices (colored $c$) to reach $ab \cat ba \cat ab \cat ba \cat ab \cat ba$.

We proceed similarly in the last case of Theorem~\ref{thm:commute}, when all cycles of $G\times H$ map to closed walks winding around the same root $R$ of $K$.
Instead of the universal cover, we can only lift to a covering space whose fundamental group is (instead of the trivial group) the subgroup of $\fundPi{K}$ generated by $R$. In other words, we measure for each vertex, using any walk from the base vertex to it, how far this walk (as mapped in $K$) goes outside $R$.
Folding vertices extremal in this sense, we eventually reach a $K$-coloring that maps all such walks within $R$, which means there is a graph homomorphism to a cycle which admits a homomorphism to $K$.
Using the multiplicativity of cycles concludes the proof.

\medskip
Formally, \marginpar{\footnotesize{$H$-extremal}}
for $\mu:G\times H \to K$, an \emph{$H$-extremal set} is a pair $(S,h_0h_1)$ where $h_0h_1 \in H$ and $S$ is a subset of $V(G)\times\{h_1\}$ that is monochromatic, whose neighborhood is monochromatic, and whose second neighborhood is non-empty, with colors different from the color of $S$ (Figure~\ref{fig:extremal}, left).
That is, $\col(S)=\{a\}$, $\col(N_{G\times h_0 h_1} (S))=\{b\}$, $N^2_{G\times h_0 h_1} (S)\neq \emptyset$ and $a \not\in\col(N^2_{G\times h_0 h_1} (S))$, for some $a,b\in V(K)$.

The following technical lemma gives our basic inductive argument.
Intuitively, if we find an $H$-extremal set $(S,h_0h_1)$, then we can $H$-improve $\mu$ by recoloring $S$ to match some color in its second neighborhood.
If this is not immediately possible, because the colors would conflict with some $\mu(\cdot,h_2)$, then by square-freeness we will find that the conflicting values give a smaller $H$-extremal set.

\vfill
\begin{figure}[H]
	\centering
	\begin{tikzpicture}[xscale=0.93]

\node at (0,0.6) {$h_0$};
\node at (2,0.6) {$h_1$};

\draw[rounded corners=5pt,draw=none,fill=blue!10,path fading=east] (-0.2,-3.9) -- (-0.2,-0.5) -- (1.8,0) -- (1.8,-5) -- cycle;
\draw[rounded corners=5pt,draw=none,fill=red!20,path fading=west] (2.2,-1)--(2.2,-3.6) -- (0.2,-3.9) -- (0.2,-0.5) -- cycle;

\draw[rounded corners=5pt] (1.8,0) -- (2.2,0) -- (2.2,-5) -- (1.8,-5) -- cycle;

\draw[rounded corners=5pt,fill=red!55] (1.8,-1) -- (2.2,-1) -- (2.2,-3.6) -- (1.8,-3.6) -- cycle;
\node[v,black,fill=red!60!black] at (2,-1.3) {};
\node[v,black,fill=red!60!black] at (2,-1.7) {};
\node at (2,-2.1) {$S$};
\node[v,black,fill=red!60!black] at (2,-2.5) {};
\node[v,black,fill=red!60!black] at (2,-2.9) {};
\node[v,black,fill=red!60!black] at (2,-3.3) {};
\draw [decorate,decoration={brace,amplitude=5pt,mirror},xshift=0pt,yshift=0pt] (1.72,-1.1)--(1.72,-3.5) node [midway,black!60,xshift=-10pt,yshift=-14pt,align=center] {$a$\\\rotatebox[origin=c]{-90}{$\rightsquigarrow$}\\$a'$};
\node at (1.2,-4.9) {$N^2(S)$};

\draw[rounded corners=5pt,fill=blue!35] (-0.2,-0.5) -- (0.2,-0.5) -- (0.2,-3.9) -- (-0.2,-3.9) -- cycle;
\node[v,black,fill=blue!60] at (0,-0.8) {};
\node[v,black,fill=blue!60] at (0,-1.2) {};
\node[v,black,fill=blue!60] at (0,-1.6) {};
\node[v,black,fill=blue!60] at (0,-2.0) {};
\node[v,black,fill=blue!60] at (0,-2.4) {};
\node[v,black,fill=blue!60] at (0,-2.8) {};
\node[v,black,fill=blue!60] at (0,-3.2) {};
\node[v,black,fill=blue!60] at (0,-3.6) {};
\draw [decorate,decoration={brace,amplitude=5pt,mirror},xshift=0pt,yshift=0pt] (-0.26,-0.6)--(-0.26,-3.8) node [midway,black!60,xshift=-10pt] {$b$};
\node at (0,-4.2) {$N(S)$};

\node[black!60] at (1.6,-0.3) {$a'$};
\node[v,fill=violet!40] at (2,-0.4) {};
\node[v,fill=violet!40] at (2,-3.9) {};
\node[v,fill=pink!40] at (2,-4.3) {};
\node[v,fill=violet!40] at (2,-4.7) {};

\begin{scope}[shift={(4.2,0)}]
\node at (0,0.6) {$h_0$};
\node at (2,0.6) {$h_1$};
\node at (4,0.6) {$h_2$};

\draw[rounded corners=5pt,draw=none,fill=blue!10,path fading=east] (-0.2,-3.9) -- (-0.2,-0.5) -- (1.8,0) -- (1.8,-5) -- cycle;
\draw[rounded corners=5pt,draw=none,fill=red!20,path fading=west] (2.2,-1)--(2.2,-3.6) -- (0.2,-3.9) -- (0.2,-0.5) -- cycle;
\draw[rounded corners=5pt,draw=none,fill=red!10,path fading=east] (1.8,-1)--(1.8,-3.6) -- (3.8,-3.9) -- (3.8,-0.5) -- cycle;

\draw[rounded corners=5pt] (1.8,0) -- (2.2,0) -- (2.2,-5) -- (1.8,-5) -- cycle;

\draw[rounded corners=5pt,fill=red!55] (1.8,-1) -- (2.2,-1) -- (2.2,-3.6) -- (1.8,-3.6) -- cycle;
\node[v,black,fill=red!60!black,label={[xshift=-1pt,yshift=-2pt]15:$(g'_1,h_1)$}] (red3) at (2,-1.3) {};
\node[v,black,fill=red!60!black] (r) at (2,-1.7) {};
\node at (2,-2.1) {$S$};
\node[v,black,fill=red!60!black] at (2,-2.5) {};
\node[v,black,fill=red!60!black] at (2,-2.9) {};
\node[v,black,fill=red!60!black] at (2,-3.3) {};
\node at (2.8,-4.9) {$N^2(S)$};

\draw[rounded corners=5pt,fill=blue!35] (-0.2,-0.5) -- (0.2,-0.5) -- (0.2,-3.9) -- (-0.2,-3.9) -- cycle;
\node[v,black,fill=blue!60] (yh0) at (0,-0.8) {};
\node[v,black,fill=blue!60] at (0,-1.2) {};
\node[v,black,fill=blue!60] (gp) at (0,-1.6) {};
\node[v,black,fill=blue!60] (yp) at (0,-2.0) {};
\node[v,black,fill=blue!60] at (0,-2.4) {};
\node[v,black,fill=blue!60,label={[yshift=0pt,xshift=-1pt]0:$(g_2,h_0)$}] (bluen) at (0,-2.8) {};
\node[v,black,fill=blue!60] at (0,-3.2) {};
\node[v,black,fill=blue!60] at (0,-3.6) {};
\draw [decorate,decoration={brace,amplitude=5pt,mirror},xshift=0pt,yshift=0pt] (-0.26,-0.6)--(-0.26,-3.8) node [midway,black!60,xshift=-10pt] {$b$};
\node at (0,-4.2) {$N(S)$};

\node[black!60] at (1.6,-0.3) {$a'$};
\node[black!60] at (1.6,-1.2) {$a$};
\node[v,fill=violet!40] (xh1) at (2,-0.4) {};
\node[v,fill=violet!40,label={[xshift=-1pt,yshift=-2pt]0:$(g_1,h_1)$}] (violet1) at (2,-3.9) {};
\node[v,fill=pink!40] at (2,-4.3) {};
\node[v,fill=violet!40] at (2,-4.7) {};

\draw[rounded corners=5pt,draw=none,fill=yellow!35,path fading=west] (4.2,-1.8) -- (4.2,-3.0) -- (2.2,-3.2) -- (2.2,-1.3) -- cycle;
\draw[rounded corners=5pt] (3.8,-0.5) -- (4.2,-0.5) -- (4.2,-3.9) -- (3.8,-3.9) -- cycle;
\draw[rounded corners=5pt,gray,fill=yellow!25] (3.8,-1.8) -- (4.2,-1.8) -- (4.2,-3.0) -- (3.8,-3.0) -- cycle;
\node[v,fill=blue!60] (yh2) at (4,-0.8) {};
\node[v,fill=green!60] at (4,-1.2) {};
\node[v,fill=green!60] (g) at (4,-1.6) {};
\node[v,black!60,fill=yellow!40,label={[yshift=0pt]180:$S'$}] (y) at (4,-2.0) {};
\node[v,black!60,fill=yellow!40] at (4,-2.4) {};
\node[v,black!60,fill=yellow!40,label={[yshift=-1pt,xshift=1pt]180:$(g_2,h_2)$}] (botyellow) at (4,-2.8) {};
\node[v,fill=green!60] at (4,-3.2) {};
\node[v,fill=blue!60] at (4,-3.6) {};
\draw [decorate,decoration={brace,amplitude=3pt},xshift=0pt,yshift=0pt] (4.27,-1.9) -- (4.27,-2.9)
node [midway,black!60,xshift=7pt,yshift=-2pt] {$b'$};
\end{scope}

\draw (bluen) -- (red3) -- (botyellow);
\draw[dashed] (botyellow) -- (violet1)  -- (bluen);

\begin{scope}[shift={(10.8,0)}]
\node at (0,0.6) {$h_0$};
\node at (2,0.6) {$h_1$};
\node at (4,0.6) {$h_2$};

\draw[rounded corners=5pt,draw=none,fill=blue!10,path fading=east] (-0.2,-3.9) -- (-0.2,-0.5) -- (1.8,0) -- (1.8,-5) -- cycle;
\draw[rounded corners=5pt,draw=none,fill=red!20,path fading=west] (2.2,-1)--(2.2,-3.6) -- (0.2,-3.9) -- (0.2,-0.5) -- cycle;
\draw[rounded corners=5pt,draw=none,fill=red!10,path fading=east] (1.8,-1)--(1.8,-3.6) -- (3.8,-3.9) -- (3.8,-0.5) -- cycle;

\draw[rounded corners=5pt] (1.8,0) -- (2.2,0) -- (2.2,-5) -- (1.8,-5) -- cycle;

\draw[rounded corners=5pt,fill=red!55] (1.8,-1) -- (2.2,-1) -- (2.2,-3.6) -- (1.8,-3.6) -- cycle;
\node[v,black,fill=red!60!black,label={15:$(z,h_1)$}] (zh1) at (2,-1.3) {};
\node[v,black,fill=red!60!black] (r) at (2,-1.7) {};
\node at (2,-2.1) {$S$};
\node[v,black,fill=red!60!black] at (2,-2.5) {};
\node[v,black,fill=red!60!black] at (2,-2.9) {};
\node[v,black,fill=red!60!black] at (2,-3.3) {};
\draw [decorate,decoration={brace,amplitude=5pt,mirror},xshift=0pt,yshift=0pt] (1.72,-1.1)--(1.72,-3.5) node [midway,black!60,xshift=-10pt,yshift=-14pt,align=center] {$a$\\\rotatebox[origin=c]{-90}{$\rightsquigarrow$}\\$a'$};
\node at (2.8,-4.9) {$N^2(S)$};

\draw[rounded corners=5pt,fill=blue!35] (-0.2,-0.5) -- (0.2,-0.5) -- (0.2,-3.9) -- (-0.2,-3.9) -- cycle;
\node[v,black,fill=blue!60,label={60:$(y,h_0)$}] (yh0) at (0,-0.8) {};
\node[v,black,fill=blue!60] at (0,-1.2) {};
\node[v,black,fill=blue!60] (gp) at (0,-1.6) {};
\node[v,black,fill=blue!60] (yp) at (0,-2.0) {};
\node[v,black,fill=blue!60] at (0,-2.4) {};
\node[v,black,fill=blue!60] at (0,-2.8) {};
\node[v,black,fill=blue!60] at (0,-3.2) {};
\node[v,black,fill=blue!60] at (0,-3.6) {};
\draw [decorate,decoration={brace,amplitude=5pt,mirror},xshift=0pt,yshift=0pt] (-0.26,-0.6)--(-0.26,-3.8) node [midway,black!60,xshift=-10pt] {$b$};
\node at (0,-4.2) {$N(S)$};

\node[black!60] at (1.7,-0.1) {$a'$};
\node[v,fill=violet!40,label={15:$(x,h_1)$}] (xh1) at (2,-0.4) {};
\node[v,fill=violet!40] at (2,-3.9) {};
\node[v,fill=pink!40] at (2,-4.3) {};
\node[v,fill=violet!40] at (2,-4.7) {};

\draw[rounded corners=5pt,draw=none,fill=yellow!35,path fading=west] (4.2,-1.8) -- (4.2,-3.0) -- (2.2,-3.2) -- (2.2,-1.3) -- cycle;
\draw[rounded corners=5pt] (3.8,-0.5) -- (4.2,-0.5) -- (4.2,-3.9) -- (3.8,-3.9) -- cycle;
\draw[rounded corners=5pt,gray,fill=yellow!25] (3.8,-1.8) -- (4.2,-1.8) -- (4.2,-3.0) -- (3.8,-3.0) -- cycle;
\node[v,fill=blue!60,label={60:$(y,h_2)$}] (yh2) at (4,-0.8) {};
\node[v,fill=green!60] at (4,-1.2) {};
\node[v,fill=green!60,label={0:$(y',h_2)$}] (g) at (4,-1.6) {};
\node[v,black!60,fill=yellow!40] (y) at (4,-2.0) {};
\node[v,black!60,fill=yellow!40,label=0:$S'$] at (4,-2.4) {};
\node[v,black!60,fill=yellow!40] at (4,-2.8) {};
\node[v,fill=green!60] at (4,-3.2) {};
\node[v,fill=blue!60] at (4,-3.6) {};
\draw [decorate,decoration={brace,amplitude=4pt,mirror},xshift=0pt,yshift=0pt] (3.73,-1.9) -- (3.73,-2.9)
node [midway,black!60,xshift=-10pt] {$b'$};

\draw (xh1) -- (yh2) -- (zh1) -- (yh0) -- (xh1);
\draw[black!50] (y) -- (r) -- (g) -- (zh1);
\draw[black!50] (yp) -- (r) -- (gp) -- (zh1);
\end{scope}

\end{tikzpicture}
	\caption{Illustration for the proof of Lemma~\ref{lem:recoloring}: $K$-colored vertices of $G\times H$ (arranged in columns according to their $H$ coordinate). Left: an $H$-extremal set (dark red) $S$, its neighborhood (blue) and second neighborhood in $G\times h_0 h_1$.
	Middle: $S'$ cannot have a neighbor outside $S$.
	Right: $S'$ has a non-empty second neighborhood.}
	\label{fig:extremal}
\end{figure}
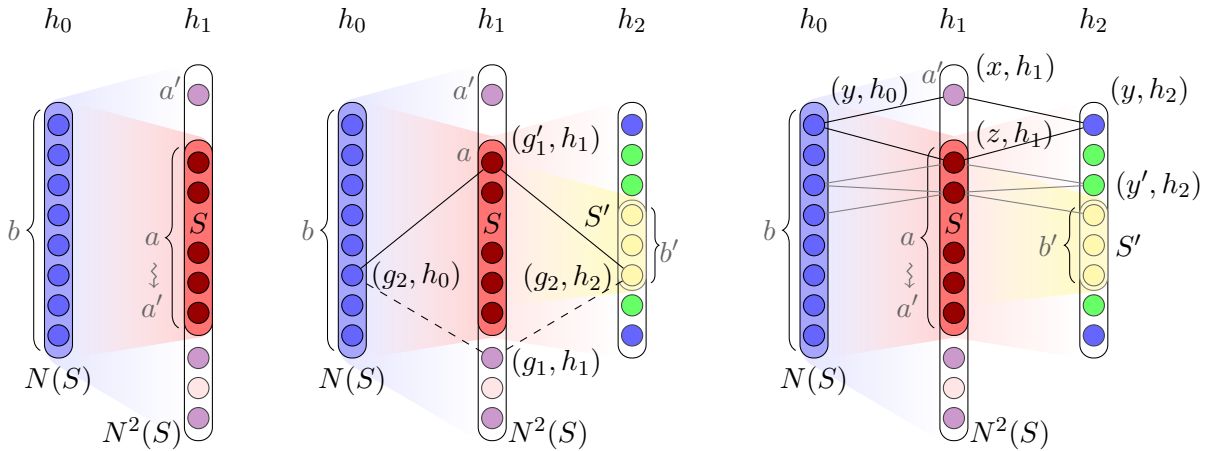
\vfill

\pagebreak

\begin{lemma}\label{lem:recoloring}
	Let $\mu:G\times H\to K$ for $K$ square-free, $G$ connected and non-bipartite.
	If there is an $H$-extremal set, then $\mu$ can be $H$-improved by recoloring.
\end{lemma}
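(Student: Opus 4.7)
The plan is to attempt the natural recoloring suggested by the given extremal set $(S, h_0 h_1)$ and, whenever a conflict prevents it, to extract a strictly simpler $H$-extremal set from the obstruction and recurse. With $\mu(S) = \{a\}$, $\mu(N(S)) = \{b\}$, pick any $a' \in \mu(N^2_{G\times h_0 h_1}(S))$; by extremality $a' \neq a$ and $a'b \in E(K)$. Consider the candidate $\mu^*$ obtained by changing every $(g, h_1) \in S$ from $a$ to $a'$. Since $H$ is loopless, $S$ is an independent set in $G \times H$, so $\mu^*$ is reachable from $\mu$ by one-vertex recolorings. Provided $\mu^*$ is a valid $K$-coloring, it $H$-improves $\mu$: in column $h_1$, each $(g'', h_1) \in N^2(S)$ colored $a'$ shares a $G$-common neighbor (any witness in $N(S)$) with some $(g, h_1) \in S$, so those disagreements collapse to agreements; no new disagreements appear because $a \notin \mu(N^2(S))$ by extremality; columns $h \neq h_1$ are untouched.

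The substantive work is in the case when a conflict prevents $\mu^*$ from being a valid coloring: some $(g_0, h_1) \in S$ has a neighbor $(y, h_2)$ with $h_2 \in N_H(h_1)\setminus\{h_0\}$ and $a'c \notin E(K)$ for $c := \mu(y, h_2)$. Pick any further $G$-neighbor $z \neq g_0$ of $y$ and examine the square $(g_0, h_1),\ (y, h_0),\ (z, h_1),\ (y, h_2)$ in $G\times H$; it is non-trivial because $h_0 \neq h_2$. Its $\mu$-image $a, b, \mu(z, h_1), c$ is then a trivial square in $K$ by square-freeness, giving either $c = b$ (ruled out, since then $a'c = a'b \in E(K)$) or $\mu(z, h_1) = a$; since $a \notin \mu(N^2(S))$, the latter puts $(z, h_1) \in S$. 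Hence every $G$-neighbor of $y$ lifts into $\hat S := \{g : (g, h_1) \in S\}$.

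From this structural data I would build $(S', h_1 h_2)$ with $S' := \{(y', h_2) : \mu(y', h_2) = c\text{ and }N_G(y') \subseteq \hat S\}$ as a new $H$-extremal set; it is non-empty because $(y, h_2) \in S'$. By construction $S'$ is monochromatic of color $c$; the square argument above ensures its neighborhood in $G \times h_1 h_2$ is contained in $S$ and hence monochromatic of color $a$, which is the content of the middle panel of the figure; and the dual square argument, applied to any $(g_1, h_1) \in N^2(S)$ colored $a'$, a common $G$-neighbor $g_2$ with some $g'_1 \in \hat S$, and the lift $(g_2, h_2)$, forces $\mu(g_2, h_2) = b \neq c$ by square-freeness, placing $(g_2, h_2)$ in $N^2_{G \times h_1 h_2}(S')$ and giving the non-empty second neighborhood of a fresh color. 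Thus $(S', h_1 h_2)$ is genuinely $H$-extremal and one applies the inductive hypothesis to it.

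The main obstacle is choosing a complexity measure so that the descent terminates: the sizes of $S$ and $S'$ are not directly comparable, nor is the ambient column fixed, so induction on $|S|$ alone is hopeless. Natural candidates are a lexicographic measure combining the $H$-improvement count $t(\mu)$ with a structural depth of the extremal configuration, or else a direct finiteness appeal using that at each descent the conflict color $c$ must be compatible in $K$ with $a$ but incompatible with $a'$, restricting the number of distinct $c$ that can appear to a quantity bounded by $|V(K)|\cdot|V(H)|$. Making the descent strictly monotone under such a measure, and handling the degenerate case where $y$ has no second $G$-neighbor $z$ (which is where the hypotheses that $G$ is connected and non-bipartite enter, ensuring enough room for non-trivial squares), form the technical heart of the proof.
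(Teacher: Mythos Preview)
Your strategy matches the paper's --- attempt the recoloring $a \to a'$ on $S$, and on a conflict at some $(y,h_2)$ of color $c$ (the paper calls it $b'$), descend to a new $H$-extremal set $(S', h_1 h_2)$ --- but you leave the termination measure open, and that is precisely the missing idea. The measure is $|S| + |N_{G\times h_0 h_1}(S)|$. From $N_{G\times h_2 h_1}(S') \subseteq S$ (your square argument) one gets $|N(S')| \le |S|$; and $S' \subseteq N_{G\times h_2 h_1}(S)$, which has the same cardinality as $N_{G\times h_0 h_1}(S)$, while your ``dual'' vertex $(g_2,h_2)$ lies in $N_{G\times h_2 h_1}(S)\setminus S'$ (it has color $b \ne c$), so the inclusion is strict and $|S'| < |N(S)|$. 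Adding the two gives $|S'|+|N(S')| < |S|+|N(S)|$. The paper therefore simply starts from an extremal set minimizing this quantity and derives a contradiction; no induction or recursion is needed. Your proposed alternatives (a lexicographic measure with $t(\mu)$, or a bound via $|V(K)|\cdot|V(H)|$) do not yield a strict descent.

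Two subsidiary gaps. Your claim that $(g_2,h_2)\in N^2_{G\times h_1 h_2}(S')$ is not justified: its neighbor $(g'_1,h_1)$ lies in $S$, but you have not shown it lies in $N(S')$. The paper fixes this by assuming without loss of generality that $S\cup N(S)$ is connected (restricting $S$ to a component otherwise), then taking a path inside $S\cup N_{G\times h_2 h_1}(S)$ from $S'$ to the color-$b$ vertex and picking the first vertex of color $\ne c$ on it. You also do not verify that $c\notin \mu(N^2_{G\times h_1 h_2}(S'))$; with the paper's definition $S' = N_{G\times h_2 h_1}(S)\cap \mu^{-1}(c)$ this is immediate from $N^2(S')\subseteq N_{G\times h_2 h_1}(S)\setminus S'$, whereas with your more restrictive definition it requires rerunning the square argument to show the two definitions of $S'$ coincide. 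Finally, the ``degenerate case'' you flag is a non-issue: if $g_0$ is the only $G$-neighbor of $y$, then $N_G(y)\subseteq \hat S$ trivially.
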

\begin{proof}
Choose an $H$-extremal set $(S,h_0h_1)$ minimizing $|S|+|N_{G\times h_0 h_1} (S)|$.
Let $a'$ be any color in $\col(N^2_{G\times h_0 h_1} (S))$, $a'\neq a$.
Consider recoloring $S$ from $a$ to $a'$, that is, consider the assignment $\col^*:V(G\times H) \to V(K)$ obtained from $\col$ by setting $\col^*(g,h_1) := a'$ for $(g,h_1)\in S$.
It is easy to see that $\mu^*$ $H$-improves over $\mu$ (indeed, the relation $\mu(g,h) \stackrel{?}{=} \mu(g',h)$ could change only for pairs with $(g,h)\in S, (g',h)\not\in S$; so $h=h_1$ and $\mu(g,h_1)=a\neq \mu(g',h_1)$, a non-equality, could only change to an equality, namely $\mu^*(g,h_1)=a'=\mu^*(g',h_1)$, which indeed happened for at least one $g'$).

If $\mu^*$ is a $K$-coloring then we are done, so assume otherwise. There must be some $(g,h_2) \in N_{G\times H}(S)$ with a color $b':= \mu^*(g,h_2)$ such that $a'b'\not\in K$.
Since $H$ is loop-free, $h_2 \neq h_1$, hence $\mu^*(g,h_2)=\mu(g,h_2)=b'$.
By definition of $H$-extremal, $N_{G\times h_0h_1}(S)$ is mapped to one color, say $b$. It must be that $a'b \in K$ (as $a'$ appears on $N^2_{G\times h_0 h_1} (S)$ and $\mu$ was a $K$-coloring) and thus $b\neq b'$ and in particular $h_2\neq h_0$.

Let $S' = N_{G\times h_2 h_1}(S) \cap \mu^{-1}(\{b'\})$. By the above, $S'$ is non-empty.
We want to show $(S',h_1h_2)$ should have been chosen instead of $(S,h_0h_1)$.

We claim that $N_{G\times h_2 h_1}(S') \subseteq S$.
Suppose to the contrary that $(g_1,h_1)(g_2,h_2) \in G\times H$ for some $(g_2,h_2) \in S'$ and $(g_1,h_1)\not \in S$.
By definition of $S'$, $(g_2,h_2)$ also has a neighbor $(g_1',h_1)\in S$.
Consider now $(g_2,h_0)$---it must be a neighbor of $(g_1,h_1)$ and $(g_1',h_1)$ as well.
Hence $(g_1,h_1)$ is in $N^2_{G\times h_0 h_1} (S)$, implying $a'':=\mu(g_1,h_1)\neq a$.
But then $\mu(g_2,h_0)=b$, $\mu(g_1',h_1)=a$, $\mu(g_2,h_2)=b'$, and $\mu(g_1,h_1)=a''$, which gives a square in $K$ with $b \neq b'$, $a\neq a''$, a contradiction.

Hence $N_{G\times h_2 h_1}(S') \subseteq S$.
Thus, we have a non-empty set $S'\subseteq G\times\{h_2\}$ such that $\mu(S')=\{b'\}$, $\mu(N_{G\times h_2h_1}(S')) = \{a\}$, and $\mu(N^2_{G\times h_2 h_1} (S')) \subseteq \mu(N_{G\times h_2 h_1}(S)\setminus S') \not\ni b'$ by choice of $S'$.

To show that $N^2_{G\times h_2 h_1} (S') \neq \emptyset$, let $(x,h_1)\in N^2_{G\times h_0h_1}(S)$, let $(y,h_0)$ be its neighbor in $N_{G\times h_0h_1}(S)$, and let $(z,h_1)$ be a neighbor of $(y,h_0)$ in $S$.
Then $(y,h_2)$ is also a neighbor of $(x,h_1)$ and $(z,h_1)$.
Since $\mu(x,h_1)\neq a = \mu(z,h_1)$, it must be that $\mu(y,h_2)=\mu(y,h_0)$ (by square-freeness of $K$) and hence $\mu(y,h_2)=b\neq b'$.
The set $S \cup N_{G\times h_0 h_1}(S)$ must be connected in $G\times h_0 h_1$, otherwise we could limit $S$ to one of the connected components at the beginning.
Thus $S \cup N_{G\times h_2 h_1}(S)$ is connected in $G\times h_2 h_1$ as well, which means it contains a path from $S'$ to $(y,h_2)$. The first vertex $(y',h_2)$ on this path such that $\mu(y',h_2)\neq b'$ then exists and is in $N^2_{G\times h_2 h_1} (S')$, showing its non-emptiness.
Hence $(S',h_1h_2)$ is an $H$-extremal set.

It remains to show that $|S'|+|N_{G\times h_2 h_1}(S')| < |S|+|N_{G\times h_0 h_1}(S)|$.
We have already proved $N_{G\times h_2 h_1}(S') \subseteq S$, so  $|N_{G\times h_2 h_1}(S')| \leq |S|$.
The inclusion $S'\subseteq N_{G\times h_2 h_1}(S)$ is strict because of $(y,h_2)$, hence $|S'| < |N_{G\times h_2 h_1}(S)| = |N_{G\times h_0 h_1}(S)|$.
Adding the inequalities gives the claim, so $(S',h_1h_2)$ indeed should have been chosen in place of $(S,h_0h_1)$ at the beginning.
\end{proof}

A $K$-coloring that cannot be improved further has no $H$-extremal sets, which we use in the following lemma to strengthen the outcomes of Theorem~\ref{thm:commute}.
For the first and second outcome we will use $H'=h_0h_1$ instead of $H$, for the third outcome we apply this lemma directly.

\begin{lemma}\label{lem:recolored}
Let $\mu:G\times H \to K$ for $K$ square-free, $G,H$ connected, and $G$ non-bipartite.
Suppose $\mu$ has no $H$-extremal sets and suppose there is an $R\in\fundpi{\mu(g_0,h_0)}{K}$ such that for every closed walk $C$ from $(g_0,h_0)$ in $G\times H$, $\red{\mu(C)} =\pexp{R}{i}$ for some $i\in\ZZ$.
Then either:
\begin{itemize}
	\item $\mu$ is constant on $V(G)\times\{h\}$ for some $h\in V(H)$, or
	\item 
	for every walk $W$ in $G\times H$ starting at $(g_0,h_0)$, $\red{\mu(W)}$ is a prefix of $\pexp{R}{i}$ for some $i\in\ZZ$.
\end{itemize}
\end{lemma}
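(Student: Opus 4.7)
The plan is to lift $\mu$ through the covering space of $K$ corresponding to $\langle R\rangle\le\fundpi{\mu(g_0,h_0)}{K}$ and extract an $H$-extremal set from any simultaneous failure of both conclusions, contradicting the hypothesis. First I would reduce to $R$ primitive: if $R=\eps$, the hypothesis with connectedness of $G\times H$ immediately forces $\mu$ constant, giving the first conclusion. Assuming $R$ primitive, I would use that $\fundPi{K}$ is free (as $K$ is square-free) to define the connected covering $\tilde K\to K$ with $\fundpi{*}{\tilde K}\cong\ZZ$; combinatorially $\tilde K$ is a rank-one graph, namely a simple cycle $\tilde R$ (the chosen lift of $R$ through the basepoint) with trees rooted at its vertices. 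Each $p\in V(\tilde K)$ then has a well-defined distance $d(p)=d_{\tilde K}(p,\tilde R)$, and for $d(p)\ge 1$ a unique parent $\mathrm{par}(p)$ at distance $d(p)-1$. The hypothesis lets me lift $\mu$ to a graph homomorphism $\tilde\mu\colon G\times H\to\tilde K$ fixing the basepoints, and unwinding definitions shows that the second conclusion is equivalent to $\tilde\mu(V(G\times H))\subseteq V(\tilde R)$.

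Next I would argue by contradiction: supposing both conclusions fail, pick $v=(g_v,h_v)$ maximising $d:=d(\tilde\mu(v))>0$ and set $p:=\tilde\mu(v)$, $p':=\mathrm{par}(p)$, $a:=\pi(p)$, $b:=\pi(p')$. Every neighbour of $v$ in $G\times H$ lifts to a neighbour of $p$ in $\tilde K$ at distance $\le d$; the unique such vertex is $p'$, so $\mu(N_{G\times H}(v))=\{b\}$. For any $h_1\in N_H(h_v)$ I then set $S:=\{(g,h_v):\tilde\mu(g,h_v)=p\}\subseteq V(G)\times\{h_v\}$; then $v\in S$, $\mu(S)=\{a\}$, and the same maximality applied pointwise to elements of $S$ yields $\mu(N_{G\times h_vh_1}(S))=\{b\}$. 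Non-constancy of $\mu(\cdot,h_v)$ (from failure of the first conclusion) forces $S|_G\subsetneq V(G)$, and with $G$ connected and non-bipartite a short case analysis gives $N^2_{G\times h_vh_1}(S)\ne\emptyset$.

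The crux will be to verify $a\notin\mu(N^2_{G\times h_vh_1}(S))$, which makes $(S,h_1h_v)$ an $H$-extremal set and contradicts the hypothesis. Any vertex of $N^2(S)$ lifts to a neighbour of $p'$ at distance $\le d$ other than $p$. Other lifts of $a$ in $\tilde K$ are translates $\gamma^i\cdot p$ ($i\ne 0$) under the deck generator $\gamma$ of $\langle R\rangle$; since deck transformations preserve $d$, such a translate lies in the branch rooted at the shifted foot of $p$ on $\tilde R$, disjoint from the branch containing $p'$, and hence not adjacent to $p'$.

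The main obstacle I expect is making this final branch-separation argument precise, especially in the boundary case $d=1$ where $p'\in V(\tilde R)$: one must additionally rule out the two $\tilde R$-neighbours of $p'$ as lifts of $a$, which holds because the edge $pp'$ is off $\tilde R$, so $ab$ in $K$ is not among the $R$-incidences at the passage through $p'$, and square-freeness of $K$ precisely rules out the degenerate configurations that could otherwise force such incidences to coincide.
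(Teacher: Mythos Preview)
Your overall strategy is the paper's own, recast in covering-space language (the paper sketches exactly this intuition in the paragraph before the lemma): the paper's $\mathtt{ext}(v)$ --- the suffix of $\overline{\mu(W)}$ beyond the maximal $R^{\pm}$-prefix --- is precisely the projection to $K$ of the unique path from $\tilde\mu(v)$ down to the cycle in $\tilde K$, so maximising $|\mathtt{ext}(v)|$ and maximising your $d(\tilde\mu(v))$ select the same vertices, and both proofs then read off the same $H$-extremal set.

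Two points need fixing, though.

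First, the $R=\varepsilon$ case is not immediate. The hypothesis that every closed walk reduces to $\varepsilon$ only says that $\mu$ lifts to the universal cover of $K$ (a tree); it does not force $\mu$ to be constant --- $\mu$ could map $G\times H$ onto a path in $K$, for instance. You still need the no-extremal-set hypothesis and the same maximisation argument (now with $\tilde K$ the universal cover and $\tilde R$ the single basepoint), exactly as in the general case.

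Second, your argument for the crux step is wrong as written. You assert that the other lifts of $a$ in $\tilde K$ are deck translates $\gamma^i\cdot p$, but the deck group of the $\langle R\rangle$-cover is $N(\langle R\rangle)/\langle R\rangle$ (normaliser taken in the fundamental group of $K$), and in a free group of rank at least two the normaliser of a cyclic subgroup generated by a primitive element is that subgroup itself --- so the deck group is \emph{trivial} and there is no generator $\gamma$. The fibre over $a$ is far larger than a single orbit. Fortunately the conclusion you need is easier than you think: a graph covering is locally bijective on neighbourhoods, so $p'$ has exactly one neighbour in $\tilde K$ projecting to $a$, namely $p$. Since every $y\in N^2_{G\times h_vh_1}(S)$ lies in $V(G)\times\{h_v\}$ with $\tilde\mu(y)\neq p$ (because $y\notin S$) and $\tilde\mu(y)$ is adjacent to $\tilde\mu(x)=p'$ for some $x\in N(S)$, local bijectivity gives $\mu(y)\neq a$ at once. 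This also dissolves your boundary case $d=1$: there is nothing special to check there.
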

\begin{proof}
\def\ext{\texttt{ext}}
\def\pre{\texttt{pre}}
For a reduced walk $W$ in $G\times H$ starting at $(g_0,h_0)$, define $\pre(W)$ as the longest prefix of $\red{\mu(W)}$ which is also a prefix of $\pexp{R}{i}$ for some $i\in\ZZ$ (note it might be a prefix of $R$ and $R^{-1}$ at the same time).
Define $\ext(W)$ as the remaining suffix: $\red{\mu(W)} = \pre(W) \cat \ext(W)$.

We claim for two walks $W,W'$ with same endpoints, $\ext(W)=\ext(W')$.
Indeed, since $W\cat W'^{-1}$ is a closed walk starting and ending in $(g_0,h_0)$, we have  $\pexp{R}{i} = \red{\mu(W)} \cdot \red{\mu(W')}^{-1} = \red{ \pre(W) \cat \ext(W) \cat \ext(W')^{-1} \cat \pre(W')^{-1}}$ for some $i\in\ZZ$.
All edges in $\ext(W)$ (and possibly more) must be reduced by $\ext(W')^{-1}$ in the above expression, as otherwise $\pre(W) \cat e$ would be a prefix of the reduced expression, where $e$ is the first edge of $\ext(W)$---this is impossible because $\pre(W) \cat e$ is not a prefix of $\pexp{R}{i}$ (by definition of $\pre$).
Hence $\ext(W)$ is a suffix of $\ext(W')$.
Symmetrically, $\ext(W')$ is a suffix of $\ext(W)$, so the two are equal.

Therefore, we can unambiguously define $\ext(v)$ for $v\in G\times H$ as $\ext(W)$ for any walk $W$ from $(g_0,h_0)$ to $v$.
If $\ext(v)=\eps$ for all $v\in V(G\times H)$, then the second case of the claim holds. 

\medskip
Assume then that $\ext(v)$ is not always $\eps$.
Choose $(g^*,h^*)\in V(G\times H)$ maximizing $|\ext((g^*,h^*))|$.
Let $\ext((g^*,h^*))= a_0a_1 \cat a_1a_2 \cat \dots \cat a_{n-1}a_n$ for $a_i \in V(K)$, where $n\geq 1$ by assumption.
Let $S$ be the set of vertices $s$ in $V(G)\times \{h^*\}$ with $\ext(s)=\ext((g^*,h^*))$.
As $\ext(s)$ is a walk ending at $\mu(s)$, this implies $\mu(S)=\{a_n\}$ (see Figure~\ref{fig:extension}).

We claim that $\mu(N_{G\times H}(S)) = \{a_{n-1}\}$.
Indeed, let $x \in V(G\times H)$ be a neighbor of some $s\in S$.
Let $W$ be a walk from $(g_0,h_0)$ to $s$.
Since $\red{\mu(W)} = \pre(W) \cat \ext(s)$,  
we have $\red{\mu(W\cat sx)} = \red{\pre(W) \cat \ext(s) \cat \mu(s)\mu(x)}$.
Since $W\cat sx$ is a walk to $x$ and $|\ext(x)|\leq |\ext(s)|$, the last edge of $\ext(s)$ must reduce with $\mu(s)\mu(x)$. This implies $\mu(x) = a_{n-1}$ as claimed.

Let $h'$ be any neighbor of $h^*$ in $H$.
Now either $N^2_{G\times h^* h'}(S)$ is empty or not.
In the first case, by connectedness of $G\times h^* h'$ this means that $S$ and $N_{G\times h^* h'}(S)$ cover all of $G\times h^* h'$. Since $S\subseteq V(G)\times\{h^*\}$, $S$ must be equal to the side $V(G)\times\{h^*\}$ of the bipartition of $G\times h^* h'$, and $N_{G\times h^* h'}(S)$ must be equal to the other. 
As $\mu$ is constant on $S$, the first case of the claim holds.

In the second case, if $N^2_{G\times h^* h'}(S)$ is not empty, we show that $a_n \not \in \mu(N^2_{G\times h^* h'}(S))$.
Suppose to the contrary $\mu(y) = a_n$ for some $y \in N^2_{G\times h^* h'}(S)$.
Let $x\in N_{G\times h^* h'}(S)$ be a neighbor of $y$ and let $s\in S$ be a neighbor of $x$.
As argued before, $\mu(s)=a_n$, $\mu(x)=a_{n-1}$.
Since $\mu(y)=a_n$ too, for any walk $W$ from $(g_0,h_0)$ to $x$ we have $\red{\mu(W \cat xy)} = \red{\mu(W \cat xs)}$ and hence $\ext(y)=\ext(s)$.
As $y$ is on the same side of the bipartition of $G\times h^* h'$ as $s$, this means it must have been in $S$ (by choice of $S$), a contradiction.
Thus in fact $a_n \not \in \mu(N^2_{G\times h_0 h_1}(S))$, so $(S,h_0h_1)$ would be an $H$-extremal set, meaning it is never the case that $N^2_{G\times h_0 h_1}(S)$ is not empty.
\end{proof}

\begin{figure}[H]
	\centering
	\begin{tikzpicture}

\draw[rotate around={60:(-30:3.97)},draw=none,fill=red!60] (-30:3.97) ellipse (10pt and 5pt);
\draw[rotate around={60:(-30:1.8)},draw=none,fill=blue!30] (-30:1.8) ellipse (10pt and 10pt);
\draw[rotate around={60:(-150:1.9)},draw=none,fill=violet!20] (-150:1.9) ellipse (12pt and 12pt);
\draw[rotate around={60:(70:1.8)},draw=none,fill=pink!20] (70:1.8) ellipse (9pt and 9pt);

\draw[rounded corners = 3pt,-{latex},thick,
		decoration={markings, mark=at position 0.5 with {\arrow{>}}},
		postaction={decorate}
	]  (209.5:4) -- (207.5:1.66) arc (207.5:-147.5:1.66) node[midway,below left] {$R$} --  (-149.5:4);
\draw (0,0);
\draw[rounded corners = 3pt,-{latex},
		decoration={markings, mark=at position 0.5 with {\arrow{>}}},
		postaction={decorate}
	] (209:4) -- (202:1.9) arc (202:-26:1.9) node[midway,above]{$\overline{\mu(W)}$}-- (-29.5:4);
\draw[rounded corners = 3pt,-{latex},
		decoration={markings, mark=at position 0.5 with {\arrow{>}}},
		postaction={decorate}
	] (-149:4) -- (-142:1.9) arc (-142:-33:1.9) node[midway,below]{$\overline{\mu(W')}$} -- (-30.5:4);
\node at (-152:4.6) {$(g_0,h_0)$};

\draw [decorate,decoration={brace,amplitude=5pt},xshift=3pt,yshift=2pt] (-26:2.0)--(-28:3.9) node[midway,above,yshift=3pt,xshift=10pt] {$\texttt{ext}(W)$};

\draw [decorate,decoration={brace,amplitude=5pt,mirror},xshift=-2pt,yshift=-3pt] (-33:2.0)--(-32:3.9) node[midway,below,xshift=-10pt,yshift=-5pt] {$\texttt{ext}(W')$};

\node at (-30:4.6) {$(g^*,h^*)$};
\end{tikzpicture}
	\caption{The images in $K$ of two walks from $(g_0,h_0)$ to $(g^*,h^*)$. Their final vertex, extending out of $R$, defines an $H$-extremal set: it is mapped to $a_n$ (the red color), it's neighbors are mapped to $a_{n-1}$ (the blue color), while second neighbors are not red.
	We could hence improve the mapping by moving $(g^*,h^*)$ to the violet color, say.}
	\label{fig:extension}
\end{figure}
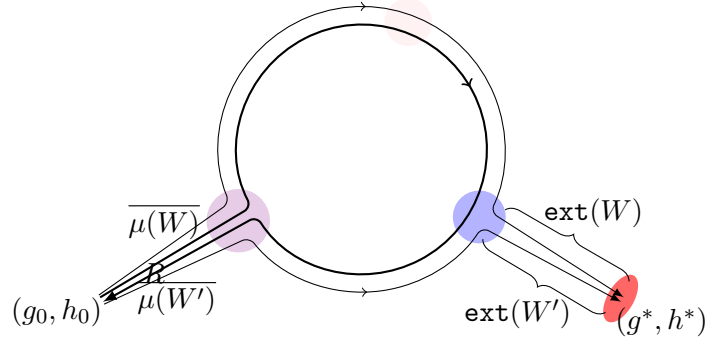

The first outcome of Lemma~\ref{lem:recolored} is easily strengthened, giving a homomorphism $H\to K$:
\pagebreak

\begin{lemma}\label{lem:recoloredConstant}
	Let $\mu:G\times H\to K$ for $K$ square-free, $G,H$ connected, and $G$ non-bipartite.
	If~$\mu$~has no $H$-extremal sets and is constant on $V(G)\times\{h\}$ for some $h\in V(H)$, then $\mu=\gamma\circ\delta$, where $\delta:G\times H \to H$ is the projection to $H$ and $\gamma: H\to K$ is a graph homomorphism.
\end{lemma}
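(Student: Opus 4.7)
The plan is to propagate the hypothesis ``$\mu$ is constant on one column $V(G)\times\{h\}$'' to all columns of $G\times H$ using the absence of $H$-extremal sets, and then to read off the factorisation $\mu=\gamma\circ\delta$.

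Fix $h^*\in V(H)$ with $\mu\equiv a$ on $V(G)\times\{h^*\}$. The first step is to show that $\mu$ is constant on $V(G)\times\{h'\}$ for every neighbour $h'$ of $h^*$ in $H$. Suppose this fails: then $\mu$ takes at least two values on $V(G)\times\{h'\}$, so for some colour $b$ in its image the set $S := \{(g,h')\in V(G)\times\{h'\} : \mu(g,h')=b\}$ is a proper non-empty subset of $V(G)\times\{h'\}$. I would argue that $(S,h^*h')$ is then an $H$-extremal set, contradicting the hypothesis. By definition $\mu(S)=\{b\}$; the neighbourhood $N_{G\times h^*h'}(S)$ lies in $V(G)\times\{h^*\}$, hence is mapped uniformly to $\{a\}$; and $a\neq b$ since $K$ is loopless while the edge in $G\times h^*h'$ joining any $(g,h')\in S$ (coloured $b$) to a neighbour in $V(G)\times\{h^*\}$ (coloured $a$) must map to an edge of $K$.

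The remaining $H$-extremal conditions are $N^2_{G\times h^*h'}(S)\neq\emptyset$ and $b\notin\mu(N^2_{G\times h^*h'}(S))$. The second is immediate: $N^2_{G\times h^*h'}(S)\subseteq V(G)\times\{h'\}\setminus S$ by the definition of $N^2$, and no vertex outside $S$ has colour $b$ by the definition of $S$. For the first I use that $G\times h^*h'\cong G\times K_2$ is connected (here we need $G$ connected and non-bipartite). Let $v\in V(G)\times\{h'\}\setminus S$ minimise the distance from $S$ to $V(G)\times\{h'\}\setminus S$ in $G\times h^*h'$. This distance is even (same side of the bipartition) and positive; if it were at least $4$, then the vertex two steps from $S$ along a shortest path would itself lie in $V(G)\times\{h'\}\setminus S$ and be closer to $S$, contradicting the choice of $v$. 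So the distance is exactly $2$, which places $v$ in $N^2_{G\times h^*h'}(S)$.

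Hence $\mu$ is constant on $V(G)\times\{h'\}$ for every neighbour $h'$ of $h^*$. Since $H$ is connected, iterating this step shows $\mu$ is constant on every column $V(G)\times\{h\}$; let $\gamma(h)$ denote this common value. For any edge $hh'\in E(H)$ I pick any edge $gg'\in E(G)$ (existing since $G$ has no isolated vertex); then $(g,h)(g',h')\in E(G\times H)$ maps under $\mu$ to $\gamma(h)\gamma(h')$, which must be an edge of $K$. So $\gamma\colon H\to K$ is a homomorphism and $\mu=\gamma\circ\delta$ by construction. The only mildly delicate point is the non-emptiness of $N^2_{G\times h^*h'}(S)$, handled by the shortest-path argument above; every other verification is a direct unfolding of definitions.
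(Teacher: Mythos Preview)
Your proof is correct and follows essentially the same approach as the paper: assume some column $V(G)\times\{h'\}$ adjacent to a constant column is not constant, take $S$ to be a colour class in that column, and argue that $(S,h^*h')$ is an $H$-extremal set using connectivity of $G\times h^*h'\cong G\times K_2$. You spell out the non-emptiness of $N^2_{G\times h^*h'}(S)$ via a shortest-path argument where the paper simply says ``by connectivity of $G\times h_0h_1$ it is easy to see'', but otherwise the arguments coincide.
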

\begin{proof}
	We first show that $\mu$ is constant on $V(G) \times \{h\}$ for every $h\in V(H)$.
	Suppose the contrary holds.
	Then by connectivity of $H$ there is an edge $h_0 h_1\in E(H)$ such that $\mu$ is constant on $V(G)\times\{h_0\}$ and is not constant on $V(G)\times\{h_1\}$.
	Let $a \in \mu(V(G)\times\{h_1\})$ and let $S = \mu^{-1}(a)\cap (V(G)\times\{h_1\})$.
	Then by connectivity of $G\times h_0 h_1$ it is easy to see that $(S,h_0h_1)$ is an $H$-extremal set, contradicting the assumption on $\mu$.
	
	Thus we can define $\gamma: V(H) \to V(K)$ by letting $\gamma(h)$ be the unique value in $\mu(V(G)\times \{h\})$.
	Clearly $\mu=\gamma\circ\delta$, where $\delta:G\times H \to H$ is the projection to $H$ and $\gamma:H \to K$.
\end{proof}

For\marginpar{\footnotesize{cyclically\\reduced}}
the other outcome of Lemma~\ref{lem:recolored}, we first need to show that $R$ is not only reduced, but is \emph{cyclically reduced}, meaning $R \cat R$ is reduced. 
This follows easily by temporarily considering a different base point.

\begin{lemma}\label{lem:cyclicallyReducedR}
	Let $\mu:G\times H\to K$ for $K$ square-free, $G$ and $H$ connected and non-bipartite.
	Suppose $\mu$ has no $H$-extremal sets.
	Suppose $R\in\fundpi{\mu(g_0,h_0)}{K}$ is such that for every closed walk $C$ from $(g_0,h_0)$ in $G\times H$, $\red{\mu(C)} =\pexp{R}{i}$ for some $i\in\ZZ$.
	If $\mu$ is not constant on $V(G)\times\{h\}$ for any $h\in V(H)$, then $R$ is cyclically reduced.
\end{lemma}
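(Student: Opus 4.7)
The plan is to argue by contradiction, shifting the base point in order to expose $R$ as reducible in a deeper sense. Suppose $R$ is reduced but $R\cat R$ is not. Then in the free group $\fundpi{\mu(g_0,h_0)}{K}$ we can decompose $R = P \cdot \tilde R \cdot P^{-1}$ uniquely with $|P|\geq 1$ and $\tilde R$ a non-empty cyclically reduced walk. Let $e$ be the first edge of $P$, equivalently the first edge of $R$. A quick check using reducedness of $R$ shows that the first edge of $\tilde R$ is not $e^{-1}$ and the last edge of $\tilde R$ is not $e$; equivalently, writing $R = e \cdot R'' \cdot e^{-1}$, the walk $R''$ has first edge $\neq e^{-1}$ and last edge $\neq e$.

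First I would apply Lemma~\ref{lem:recolored} at $(g_0,h_0)$: since $\mu$ is not constant on any slice, every walk $W$ from $(g_0,h_0)$ must satisfy that $\red{\mu(W)}$ is a prefix of some $R^i$. Because $\tilde R$ is cyclically reduced, for every $i\neq 0$ the fully reduced form of $R^i$ equals $P\cdot \tilde R^i \cdot P^{-1}$ and therefore begins with the edge $e$. Now $(g_0,h_0)$ has at least one neighbour in $G\times H$ (no isolated vertices), call it $(g_1,h_1)$, and the single-edge walk $W_0 := (g_0,h_0)(g_1,h_1)$ has a length-one reduced image. Being a length-one prefix of some $R^i$, this image is forced to equal $e$, so $\mu(W_0)=e$.

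Next I would re-invoke Lemma~\ref{lem:recolored} with base point shifted to $(g_1,h_1)$. Conjugation by $\sed{\mu(W_0)} = e$ transforms the root: for any closed walk $C$ at $(g_1,h_1)$, the walk $W_0 \cdot C \cdot W_0^{-1}$ is closed at $(g_0,h_0)$, and hence $\red{\mu(C)} = (R^*)^i$ for some $i$, where $R^* := e^{-1}\cdot R \cdot e$. Two cancellations give $R^* = R''$, a strictly shorter reduced closed walk whose first edge is not $e^{-1}$ and whose last edge is not $e$. Since $\mu$ still has no $H$-extremal sets and is still not constant on any slice, Lemma~\ref{lem:recolored} applies again at $(g_1,h_1)$ and forces every walk from $(g_1,h_1)$ to have reduced $\mu$-image a prefix of some power of $R^*$.

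The contradiction then comes from the reverse edge $W_0^{-1}$, whose reduced $\mu$-image is $e^{-1}$. Running the same first-edge analysis on $R^*$ in place of $R$: decomposing $R^* = Q \cdot \tilde{R^*} \cdot Q^{-1}$, for every $i\neq 0$ the first edge of $\red{(R^*)^i}$ is either the first edge of $R^*=R''$ (which happens when $|Q|\geq 1$, or when $i>0$ and $R^*$ is cyclically reduced) or the inverse of the last edge of $R^*=R''$ (when $i<0$ and $R^*$ is cyclically reduced). Neither possibility equals $e^{-1}$, by the properties of $R''$ noted above, so $e^{-1}$ cannot be a prefix of any $(R^*)^i$, contradicting Lemma~\ref{lem:recolored} at $(g_1,h_1)$. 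The main subtlety is the prefix analysis itself — ensuring the "first edge is $e$" conclusion holds uniformly across all non-zero exponents and for both $R$ and $R^*$, including the edge case where $R^*$ turns out to be already cyclically reduced so that negative powers begin with the inverse of the last edge rather than the first edge.
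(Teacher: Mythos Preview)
Your proof is correct and follows essentially the same route as the paper: shift the base point along the first edge $e$ of $R$, observe that the conjugated root $e^{-1}\cdot R\cdot e$ governs closed walks at the new base point, and derive a contradiction from the reverse edge $e^{-1}$ via Lemma~\ref{lem:recolored}. The only difference is cosmetic: to locate a vertex at ``$\mu$-distance $e$'' the paper walks along an odd closed cycle and takes the last point where the reduced image is $e$, whereas you invoke Lemma~\ref{lem:recolored} already at $(g_0,h_0)$ to force every neighbour to lie at distance exactly $e$ --- arguably a cleaner move, and it makes the two applications of Lemma~\ref{lem:recolored} pleasingly symmetric.
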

\begin{proof}
	Suppose to the contrary that $R=e \cat R' \cat e^{-1}$ for some $e=(k_0,k_1) \in E(K)$ (where $k_0=\mu(g_0,h_0)$) and $R' \in \fundpi{k_1}{K}$.
	Let $C$ be any closed walk from $(g_0,h_0)$ in $G\times H$ of odd length (it exists by assumptions on $G$ and $H$).
	Then $\red{\mu(C)}$ must be odd too, so in particular $\red{\mu(C)}=\pexp{R}{i}$ for some $i\in\ZZ$ other than 0.
	Thus the first edge of $\red{\mu(C)}$ is $e$.
	Let $C=W_1 \cat W_2$, where $W_1$ is the longest prefix of $C$ such that $\red{\mu(W_1)}=e$.
	Let $(g',h')$ be the last vertex of $W_1$ (and first of $W_2$).
	For any closed walk $C'$ from $(g',h')$ in $G\times H$, since $W_1 \cat C' \cat W_1^{-1}$ is a closed walk from $(g_0,h_0)$ in $G\times H$, we have $\red{\mu(C')}=\red{\mu(W_1)}^{-1} \cdot \red{\mu(W_1 \cat C' \cat W_1^{-1})} \cdot \red{\mu(W_1)} = e^{-1} \cdot \pexp{R}{j} \cdot e = e^{-1} \cdot \pexp{(e \cdot R' \cdot e^{-1})}{j} \cdot e = \pexp{R'}{j}$ for some $j\in\ZZ$.
	Therefore the premises of Lemma~\ref{lem:recolored} are true for $(g',h')$ and $R'$ too (instead of $(g_0,h_0)$ and $R$).
	The first outcome of the lemma does not hold by assumption, so the second outcome is true, implying in particular that $\red{\mu(W_1^{-1})}$ is a prefix of $\pexp{R'}{k}$ for some $k\in\ZZ$.
	However, $\red{\mu(W_1^{-1})} = e^{-1}$ and this cannot be the first edge of $R'$ nor $R'^{-1}$, because $R=e \cat R' \cat e^{-1}$ is a reduced walk, a contradiction.
\end{proof}

The next lemma (used for $F=G\times H$) gives the final conclusion of the second outcome of Lemma~\ref{lem:recolored}.
The proof describes the homomorphisms and then just checks their validity.

\begin{lemma}\label{lem:cyclicCase}
	Let $\mu: F \to K$.
	Suppose there is an $R\in\fundpi{\mu(f_0)}{K}$ such that $R$ is cyclically reduced and for every closed walk $C$ from $f_0$ in $F$, $\red{\mu(C)} =\pexp{R}{i}$ for some $i\in\ZZ$.
	Suppose for any walk $W$ in $F$ starting at $f_0$, $\red{\mu(W)}$ is a prefix of $\pexp{R}{i}$ for some $i\in\ZZ$.
	Then there exist graph homomorphisms $\gamma:F \to C_{|R|}$ and $\delta : C_{|R|} \to K$ such that $\mu= \delta \circ \gamma$.
\end{lemma}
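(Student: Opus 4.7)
The plan is to define $\delta$ as the homomorphism parametrizing the cycle traced by $R$ in $K$, and to build $\gamma$ as a ``signed distance along $R$'' map, so that $\gamma$ lifts $\mu$ to $C_n$ along $\delta$. First I would write $R = r_0 r_1 \cat r_1 r_2 \cat \dots \cat r_{n-1} r_n$ where $n := |R|$ and $r_n = r_0 = \mu(f_0)$. Cyclic reducedness rules out $n = 1$ (which would require a loop in $K$) and $n = 2$ (as then $R = r_0 r_1 \cat r_1 r_0$ would fail even to be reduced), so I may assume $n \geq 3$; the degenerate case $R = \eps$ forces $\mu$ to be constant and can be set aside. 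Then I define $\delta : C_n \to K$ by $\delta(i) := r_i$: for each edge $\{i, i{+}1 \bmod n\}$ of $C_n$, the pair $r_i\, r_{i+1 \bmod n}$ is an edge of $K$ because it occurs as a consecutive step in the walk $R \cat R$, so $\delta$ is a graph homomorphism.

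To construct $\gamma$, I would first observe that cyclic reducedness of $R$ forces the first edges $r_0 r_1$ of $R$ and $r_0 r_{n-1}$ of $R^{-1}$ to differ (otherwise $R \cat R$ would reduce at the seam); hence no nonempty reduced walk from $r_0$ can simultaneously be a prefix of a positive and a negative power of $R$. For each $f \in V(F)$ I fix a walk $W_f$ from $f_0$ to $f$; by hypothesis $\red{\mu(W_f)}$ is a prefix of some $R^k$, and I define its \emph{signed length} $s(W_f)$ to be $+|\red{\mu(W_f)}|$ in the ``forward'' case ($k \geq 0$) and $-|\red{\mu(W_f)}|$ in the ``backward'' case ($k < 0$); then set $\gamma(f) := s(W_f) \bmod n$.

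The main obstacle will be well-definedness of $\gamma$. For two walks $W, W'$ from $f_0$ to $f$, the hypothesis applied to the closed walk $W \cat W'^{-1}$ gives $\red{\mu(W)} = R^j \cdot \red{\mu(W')}$ for some $j \in \ZZ$; a case analysis on the signs of $j$ and $s(W')$, tracking the telescoping cancellations at the interface between copies of $R^{\pm 1}$ and $\red{\mu(W')}$ (either no cancellation occurs and lengths add, or the shorter side is consumed one edge at a time), will yield $s(W) = s(W') + jn$, so $\gamma$ is well defined. For the homomorphism property, given $ff' \in E(F)$ I take $W_{f'} := W_f \cat ff'$: then $\red{\mu(W_{f'})}$ is either $\red{\mu(W_f)}$ extended by one edge (no cancellation) or $\red{\mu(W_f)}$ with its last edge removed (cancellation), and in either case, by examining which direction along $R$ the prefix extends or retracts, one checks $s(W_{f'}) = s(W_f) \pm 1$, so $\gamma(f')$ is adjacent to $\gamma(f)$ in $C_n$. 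Finally, since reduction preserves endpoints, $\mu(f)$ equals the endpoint of the prefix $\red{\mu(W_f)}$, namely $r_{s(W_f) \bmod n} = \delta(\gamma(f))$, giving $\mu = \delta \circ \gamma$.
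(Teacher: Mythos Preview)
Your proposal is correct and follows essentially the same approach as the paper: define $\delta(i)=r_i$ from the vertices of $R$, define $\gamma(f)$ as the signed length of $\red{\mu(W)}$ modulo $|R|$ for any walk $W$ from $f_0$ to $f$, use cyclic reducedness of $R$ to make the sign unambiguous, and verify well-definedness via the relation $\red{\mu(W)} = R^{j}\cdot\red{\mu(W')}$ by a case split on signs. The paper's proof carries out exactly this plan, with the five-case analysis you sketch spelled out explicitly.
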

\begin{proof}
	\def\sgn{\mbox{sgn}}
	For a walk $W$ in $F$ starting at $f_0$, let $i\in\ZZ$ be such that $\red{\mu(W)}$ is a prefix of $\pexp{R}{i}$ and define $\gamma(W) = \sgn(i) \cdot |\red{\mu(W)}| \mod |R|$.
	Note this is unambiguous, as $\red{\mu(W)}$ either has zero length (so the choice of $i$ is irrelevant), or cannot be both a prefix of $\pexp{R}{i}$ for positive and negative $i$, because we assumed $R$ is cyclically reduced (so $\sgn(i)$ does not depend on the choice of $i$).
	
	For any two walks $W,W'$ from $f_0$ to the same endpoint, we want to show that $\gamma(W)=\gamma(W')$.
	Indeed, $\red{\mu(W')}=\red{\mu(W')}\cdot \red{\mu(W)}^{-1} \cdot \red{\mu(W)} = \red{\mu(W' \cat W^{-1})} \cdot \red{\mu(W)} = R^i \cdot \red{\mu(W)}$ for some $i\in\ZZ$ (since $W'\cat W^{-1}$ is a closed walk).
	Then one of the following holds, in each case implying $\gamma(W)=\gamma(W')$:
	\begin{itemize}
		\item $\red{\mu(W)}$ is empty, and then $\red{\mu(W')}=\pexp{R}{i}$ has length $0 \mod |R|$ too;
		\item $i=0$, implying $\red{\mu(W')}=\red{\mu(W)}$ and hence $\gamma(W)=\gamma(W')$ trivially;
		\item $\red{\mu(W)}$ is a prefix of $\pexp{R}{j}$ for $j\in\ZZ \setminus \{0\}$ with $\sgn(j)=\sgn(i)$, in which case $\red{\mu(W')}=\pexp{R}{i} \cdot \red{\mu(W)} = \pexp{R}{i} \cat \red{\mu(W)}$, which is a prefix of $\pexp{R}{i+j}$ with the same sign and length mod $|R|$; 
		\item $\red{\mu(W)}$ is a prefix of $\pexp{R}{j}$ for $j\in\ZZ \setminus \{0\}$ with $\sgn(j)=-\sgn(i)$ and $|\red{\mu(W)}|>|\pexp{R}{i}|$, in which case $\red{\mu(W')}= \pexp{R}{i} \cdot \red{\mu(W)}$  is a prefix of $\pexp{R}{j}$ of length $|\red{\mu(W)}|-|\pexp{R}{i}| = |\red{\mu(W)}| \mod |R|$;		
		\item $\red{\mu(W)}$ is a prefix of $\pexp{R}{j}$ for $j\in\ZZ \setminus \{0\}$ with $\sgn(j)=-\sgn(i)$ and $|\red{\mu(W)}| \leq |\pexp{R}{i}|$, in which case $\red{\mu(W')}= \pexp{R}{i} \cdot \red{\mu(W)}$ is a prefix of $\pexp{R}{i}$ of length $|\pexp{R}{i}| - |\red{\mu(W)}| = - |\red{\mu(W)}| \mod |R|$.
	\end{itemize}
	
	Therefore, we can unambiguously define $\gamma: V(F) \to \{0,\dots,|R|-1\}$ as $\gamma(f)=\gamma(W)$ for any walk $W$ from $f_0$ to $f$.
	This is a graph homomorphism from $F$ to $C_{|R|}$, because if $\{f,f'\}$ is an edge of $F$, then $\gamma(f')=\gamma(W \cat ff')=\gamma(W)\pm 1$ for any walk $W$ from $f_0$ to $f$.
	The last equality holds because $|\red{\mu(W\cat ff')}|=|\red{\mu(W)}|\pm 1$ and the sign in the definition of $\gamma$ can only change when one of $\red{\mu(W\cat ff')}, \red{\mu(W)}$ is empty.
	
	Let $R=r_0r_1 \cat r_1 r_2 \cat \dots \cat  r_{|R|-1} r_0$ for $r_i \in V(K)$. Define $\delta: \{0,\dots,|R|-1\} \to V(K)$ as $\delta(i) = r_i$.
	Since $R$ is a closed walk in $K$, $\delta:C_{|R|} \to K$ is a graph homomorphism.
	It is easily checked from definitions that $r_{\gamma(W)}$ is the endpoint of $\red{\mu(W)}$ for any walk $W$ from $f_0$ to $f$, and thus $\delta(\gamma(f))=\mu(f)$ for $f\in V(F)$.
\end{proof}

We are now ready to conclude the main theorem, in a slightly stronger form.
It gives $G\to K$, $H\to K$, or homomorphisms $G\times H \to C_n$ and $C_n \to K$.
By multiplicativity of cycles, the latter implies $G\to C_n \to K$ or $H\to C_n \to K$, concluding the proof of multiplicativity of square-free $K$.

\begin{theorem}\label{thm:main}
	Let $\mu:G\times H\to K$ for $K$ square-free, $G$ and $H$ connected and non-bipartite.
	Then there are graph homomorphisms $\mu^*: G\times H \to K$, $\gamma:G\times H\to I$ and $\delta:I\to K$ such that $\mu^*=\delta\circ\gamma$ and $\mu^*$ is reachable from $\mu$ by recoloring, where $I$ is either $G$, $H$ or $C_n$ for some $n\in\NN$.
\end{theorem}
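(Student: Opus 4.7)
The plan is to apply Theorem~\ref{thm:commute}, then $H$-improve $\mu$ by iterated recoloring until no $H$-extremal set remains, and finally chain Lemmas~\ref{lem:recolored}, \ref{lem:recoloredConstant}, \ref{lem:cyclicallyReducedR}, and \ref{lem:cyclicCase}. First, I would fix edges $g_0 g_1 \in E(G)$ and $h_0 h_1 \in E(H)$ and apply Theorem~\ref{thm:commute} at $(g_0, h_0)$, producing three cases. If we land in the symmetric Case~2, I would swap the roles of $G$ and $H$ throughout---legitimate since both are connected and non-bipartite---reducing to Case~1 or Case~3. Lemma~\ref{lem:recoloring} strictly decreases a non-negative integer each time an $H$-extremal set exists, so after finitely many steps I reach a $\mu^*$ reachable from $\mu$ by recoloring that has no $H$-extremal set. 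By Lemma~\ref{lem:invariant}, the walk invariants change only by conjugation in $\fundPi{K}$; since $\eps$ is only conjugate to $\eps$ and $\pexp{R}{i}$ to $\pexp{(Q R Q^{-1})}{i}$, the case we started in is preserved for $\mu^*$ (with a possibly conjugated $R$).

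In Case~3, I would apply Lemma~\ref{lem:recolored} to $\mu^*$ directly. Its first alternative says $\mu^*$ is constant on $V(G) \times \{h\}$ for some $h$; then Lemma~\ref{lem:recoloredConstant} yields $\mu^* = \gamma \circ \delta$ with projection $\delta: G\times H \to H$ and a homomorphism $\gamma: H \to K$, giving $I = H$. Its second alternative says every reduced image $\red{\mu^*(W)}$ is a prefix of some $\pexp{R}{i}$; then Lemma~\ref{lem:cyclicallyReducedR} promotes $R$ to cyclically reduced, and Lemma~\ref{lem:cyclicCase} applied with $F = G\times H$ and $f_0 = (g_0, h_0)$ factors $\mu^*$ as $\delta \circ \gamma$ through $C_{|R|}$, giving $I = C_{|R|}$.

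In Case~1, where $\red{\mu^*(C)} = \eps$ for every closed walk $C$ from $(g_0, h_0)$ in $G \times h_0 h_1$, I would apply Lemma~\ref{lem:recolored} to the restriction $\mu^*|_{G \times h_0 h_1}: G \times h_0 h_1 \to K$ with $R = \eps$ and ``$H$'' replaced by $h_0 h_1 \cong K_2$. Any extremal set of the restriction is, by definition, also an $H$-extremal set of $\mu^*$, hence absent; the graph $G \times h_0 h_1$ is connected since $G$ is connected and non-bipartite. The second alternative of the lemma would force $\red{\mu^*(W)} = \eps$ for every walk $W$ in $G \times h_0 h_1$ from $(g_0, h_0)$, but taking $W$ to be the single edge $(g_0, h_0)(g_1, h_1)$ gives a reduced image of length one, a contradiction. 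So the first alternative holds, and Lemma~\ref{lem:recoloredConstant} applied to the full $\mu^*$ once more gives $I = H$.

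The main obstacle is essentially bookkeeping: verifying that conjugation preserves both Cases~1 and~3 of Theorem~\ref{thm:commute}, that the $H$-extremal sets of $\mu^*$ restricted to $G \times h_0 h_1$ coincide with those of the full $\mu^*$, and that the symmetric swap of $G$ and $H$ in Case~2 is legitimate. Once these compatibilities are settled, the lemmas chain together cleanly, with each of the three possible targets $I \in \{G, H, C_n\}$ arising in exactly one branch ($I = G$ from Case~2 after the swap, $I = H$ from Cases~1 and~3's first alternative, and $I = C_n$ from Case~3's second alternative).
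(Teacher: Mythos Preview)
Your proposal is correct and follows essentially the same approach as the paper: apply Theorem~\ref{thm:commute}, iterate Lemma~\ref{lem:recoloring} to eliminate $H$-extremal sets, transfer the case hypothesis to $\mu^*$ via Lemma~\ref{lem:invariant}, and then branch through Lemmas~\ref{lem:recolored}, \ref{lem:recoloredConstant}, \ref{lem:cyclicallyReducedR}, and \ref{lem:cyclicCase} exactly as the paper does (including restricting to $H'=h_0h_1$ in Case~1). The bookkeeping points you flag---conjugation preserving the cases, extremal sets of the restriction being extremal sets of the full coloring, and the $G\leftrightarrow H$ swap in Case~2---are precisely the small verifications the paper leaves implicit, and your justifications for them are sound.
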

\begin{proof}
	Let $\{(g_0,h_0),(g_1,h_1)\}$ be an edge of $G\times H$.
	By Theorem~\ref{thm:commute}, one of the following holds:
	\begin{itemize}
		\item $\red{\mu(C)}=\eps$ for every closed walk $C$ from $(g_0,h_0)$ in $G\times h_0 h_1$.
		Then, by repeatedly applying Lemma~\ref{lem:recoloring}, we can recolor $\mu$ to eventually reach a $K$-coloring $\mu^*$ with no $H$-extremal sets.
		Since it is reached by recoloring, Lemma~\ref{lem:invariant} guarantees that it still has the same property: $\red{\mu^*(C)}=\eps$ for every closed walk $C$ from $(g_0,h_0)$ in $G\times h_0 h_1$.
		
		Therefore $\mu^*|_{V(G\times H')} : G \times H' \to K$ satisfies the conditions of Lemma~\ref{lem:recolored} for $R=\eps$, $G$ and $H'=h_0 h_1$ (a graph isomorphic to $K_2$).
		The second outcome of the lemma cannot hold, because the reduced image of a one-edge walk $\red{\mu^*((g_0,h_0)(g_1,h_1))}$ has odd length and thus cannot be a prefix of $\pexp{\eps}{i}$ (the empty walk) for any $i\in\ZZ$.
		Hence the first outcome is true, that is, $\mu^*$ is constant on $V(G)\times\{h\}$ for some $h\in \{h_0,h_1\}$.
		The claim then follows for $I=H$ from Lemma~\ref{lem:recoloredConstant}.
		
		\item  $\red{\mu(D)}=\eps$ for every closed walk $D$ from $(g_0,h_0)$ in $g_0 g_1 \times H$.
		This case is entirely symmetric with the previous one, swapping the roles of $G$ and $H$. 
		The claim then follows for $I=G$.
		
		\item There is an $R\in \fundPi{K}$ such that for every closed walk $C$ from $(g_0,h_0)$ in $G\times H$, $\red{\mu(C)}=\pexp{R}{i}$ for some $i\in\ZZ$.
		Then, again by repeatedly applying Lemma~\ref{lem:recoloring}, we can recolor $\mu$ to eventually reach a $K$-coloring $\mu^*$ with no $H$-extremal sets.
		Since it is reached by recoloring, Lemma~\ref{lem:invariant} guarantees
		that there is an $R'\in \fundPi{K}$ such that for every closed walk $C$ from $(g_0,h_0)$ in $G\times H$, $\red{\mu^*(C)}=\pexp{R'}{i}$ for some $i\in\ZZ$.
		
		Hence $\mu^*:G\times H\to K$ and $R'$ satisfy the conditions of
		 Lemma~\ref{lem:recolored} directly.
		If the first outcome of the lemma holds, then the claim follows for $I=G$ from Lemma~\ref{lem:recoloredConstant}.
		Otherwise the second outcome is true, that is, for every walk $W$ in $G\times H$ starting from $(g_0,h_0)$, $\red{\mu(W)}$ is a prefix of $R'^i$ for some $i\in\ZZ$. By Lemma~\ref{lem:cyclicallyReducedR}, $R'$ is cyclically reduced.
		Then the claim follows for $I=C_{|R'|}$ from Lemma~\ref{lem:cyclicCase}.
		\qedhere
\end{itemize}
\end{proof}

\section{Conclusions}\label{sec:conclusions}
Some further conclusions can be drawn from Theorem~\ref{thm:main}.
For one example, let $\mu:G\times H \to K$ for a square-free graph $K$, and suppose that $G\not\to K$, $G\not \to K_3$, $H\not\to K_3$, and that $H$ has only one $K$-coloring $\gamma$, up to automorphisms of $K$.
Then $\mu$ is the only $K$-coloring of $G\times H$ (up to automorphisms of $K$).
Indeed, the only possible outcome of Theorem~\ref{thm:main} (again up to automorphisms) is that $\mu$ can be recolored to $\mu^* = \delta \circ \gamma$, where $\delta$ is the projection to $H$.
That is, $\mu^*$ is constant on $V(G)\times\{h\}$ for each $h\in V(H)$.
If $\mu^*\neq \mu$, then it was obtained by recoloring; let the last recoloring step change the color of $(g,h)\in V(G\times H)$ from $a \in V(K)$ to $b:=\mu^*(g,h)$.
Before this step, the $K$-coloring was still constant on $V(G)\times\{h'\}$ for $h'\neq h \in V(H)$.
Hence replacing all values of $\mu^*(\cdot, h)=b$ with $a$ gives a different $K$-coloring of $G\times H$, which is a composition of the projection to $H$ with a different $K$-coloring of $H$ (but different only on $g$).
But this is impossible, thus in fact $\mu=\mu^*$ is the only $K$-coloring of $G\times H$ (up to automorphisms of $K$).

Second, multiplicativity of square-free graphs $K$ can be strengthened to the following statement: 
for graphs $G,H$ and odd cycles $G'$ in $G$ and $H'$ in $H$, if $G\times H' \cup G' \times H$ (an induced subgraph of $G\times H$) has a $K$-coloring, then $G\to K$ or $H\to K$.
This follows by adapting Theorem~\ref{thm:commute} so that depending on the types of $G'$ and $H'$ we have one of the same three conclusions, with the first two limited to closed walks in $G' \times h_0 h_1$ and $g_0 g_1 \times H'$ respectively (instead of $G \times h_0 h_1$ and $g_0g_1\times H$).
The third case is without change.
In the first two, we then consider $G'$ instead of $G$ (or $H'$ instead of $H$, respectively) and continue the proof without change (applying Lemma~\ref{lem:recolored} to $G'\times h_0h_1$ only) to eventually get $H\to K$ (or $G\to K$, respectively).
In other words, if $\red{\mu(\lcycle{C}{h_0 h_1})}=\eps$ for one odd cycle $C$ of $G$, then this already implies $H\to K$.

Unfortunately, this means our methods have the same limitations as previous ones: Tardif and Zhu~\cite{TardifZ02} showed that an analogous extension is false for $K=K_n$ with $n\geq 4$. Namely, for any $m>n\geq 4$ there exists $m$-chromatic graphs $G,H$ with $n$-chromatic subgraphs $G',H'$ such that $G\times H' \cup G'\times H$ is $n$-chromatic.
On the other hand, our approach yields results for high-chromatic graphs while still only relying on cycles, essentially.

Let us also mention that the proofs here are constructive, in the sense that given a $K$-coloring of $G\times H$, a $K$-coloring of $G$ or $H$ can be found in polynomial time.
This is straightforward for circular cliques, while for square-free graphs this follows from the fact that a $K$-coloring of $G\times H$ can be $H$-improved only polynomially many times.
More explicit colorings, for example describing colors of nodes of the exponential graphs $K^G$ in time polynomial in $G$, remain an interesting open problem, see~\cite{Tardif06}.

\subsection{Acknowledgments}
The author would like to thank Szymon Toruńczyk for many helpful discussions.

\printbibliography

\end{document}